\documentclass[11pt,reqno]{amsart}
\usepackage{amssymb,mathrsfs,color,mathtools,empheq, verbatim, epstopdf}
\usepackage{pinlabel}
\mathtoolsset{showonlyrefs}
\usepackage{hyperref} 

\usepackage{enumerate, bbm}
\usepackage{tensor}

\usepackage{graphicx}
\usepackage{xcolor} 
\usepackage{tensor}
\usepackage{slashed}
\usepackage{cite}

\usepackage[shortlabels]{enumitem}

\usepackage{geometry}

\numberwithin{equation}{section}

\newtheorem{mainthm}{Theorem}
\newtheorem{thm}{Theorem}[section]

\newtheorem{lem}[thm]{Lemma}
\newtheorem{prop}[thm]{Proposition}

\theoremstyle{definition} 
\newtheorem{rem}[thm]{Remark}
\newtheorem{defn}[thm]{Definition}

\theoremstyle{remark}

\def\bN {\mathbb{N}}

\def\bS {\mathbb{S}}
\def\bT {\mathbb{T}}

\def\cE {\mathcal{E}}

\def\cY {\mathcal{Y}}

\newcommand{\la}{\langle}
\newcommand{\ra}{\rangle}
\newcommand{\La}{\big\langle}
\newcommand{\Ra}{\big\rangle}

\newcommand{\spn}{\operatorname{span}}

\renewcommand{\ker}{\operatorname{ker}}

\newcommand{\vd}{\mathrm{d}}
\newcommand{\udr}{\,r\vd r}

\definecolor{deepgreen}{cmyk}{1,0,1,0.5}

\newcommand{\E}{\mathcal{E}}

\newcommand{\cS}{\mathcal{S}}

\newcommand{\N}{\mathbb{N}}
\newcommand{\R}{\mathbb{R}}

\newcommand{\Z}{\mathbb{Z}}

\newcommand{\al}{\alpha}
\newcommand{\be}{\beta}

\newcommand{\lam}{\lambda}
\newcommand{\te}{\theta}

\newcommand{\ka}{\kappa}

\newcommand{\De}{\Delta}

\newcommand{\Lam}{\Lambda}

\newcommand{\p}{\partial}

\makeatletter

\newcommand{\Rmnum}[1]{\expandafter\@slowromancap\romannumeral #1@}
\makeatother

\newcommand{\ti}{\widetilde}

\newcommand{\abs}[1]{\left\lvert{#1}\right\rvert}

\newcommand{\Del}[1]{}

\newcommand{\mand}{{\ \ \text{and} \ \  }}

\newcommand{\mif}{{\ \ \text{if} \ \ }}
\newcommand{\mfor}{{\ \ \text{for} \ \ }}
\newcommand{\mas}{{\ \ \text{as} \ \ }}

\renewcommand\Re{\mathrm{Re}\,}

\definecolor{green}{rgb}{0,0.8,0} 

\renewcommand{\Re}{\mathrm{Re}}

\newcommand{\ud}{\mathrm{d}}

\newcommand{\alp}{\alpha}

\newcommand{\veps}{\varepsilon}

\newcommand{\bfd}{{\bf d}}

\newcommand{\bfi}{{\bf i}}

\newcommand{\bfq}{{\bf q}}

\newcommand{\bbN}{\mathbb N}

\newcommand{\bbR}{\mathbb R}
\newcommand{\bbS}{\mathbb S}
\newcommand{\bbT}{\mathbb T}

\newcommand{\calC}{\mathcal C}

\newcommand{\calE}{\mathcal E}

\newcommand{\calL}{\mathcal L}

\newcommand{\calR}{\mathcal R}

\newcommand{\calT}{\mathcal T}

\newcommand{\calW}{\mathcal W}

\newcommand{\calY}{\mathcal Y}
\newcommand{\calZ}{\mathcal Z}

\allowdisplaybreaks[4]

\begin{document}
	\parindent=0pt
	\title[Soliton resolution for energy-critical GL in the radial case]{Soliton resolution for the energy-critical nonlinear Ginzburg-Landau equation in the radial case}
	\author{Yuchen Yin}
    \address{School of Mathematical Sciences\\ University of Science and Technology of China\\ Hefei 230026\\ Anhui\\ China}
\email{yuchenyin@mail.ustc.edu.cn}
\begin{abstract}
We study the the energy critical non-linear Ginzburg-Landau equation $\partial_{t} u  =z\Delta u+z|u|^{\frac{4}{D-2}} u$ with $\Re z >0$ in dimension $D\geq 3$. We prove that every radial solution with finite energy norm resolves into a finite superposition of asymptotically decoupled copies of the ground state and free radiation continuously in time. 
\end{abstract}
\maketitle
\section{Introduction}\label{sec:1}
\subsection{Setting of the Problem} Let $D\geq 3$ be the dimension. We study the energy-critical semi-linear Ginzburg-Landau flow in $\mathbb{R}^{D}$,
\begin{align}\label{eqn:GL-non-radial}
\partial_{t} u & =z\Delta u+zf(u), \qquad f(u)=|u|^{\frac{4}{D-2}} u, 
\end{align}

where $\Re z>0$.  We also assume that $|z|=1$ by rescaling. Under radial symmetry, the equation ~\eqref{eqn:GL-non-radial} can be written in the following form

\begin{align}\label{eqn:GL}
\partial_{t} u & =z\p_r^2 u+z\frac{D-1}{r}\p_r u+zf(u), \qquad f(u)=|u|^{\frac{4}{D-2}} u, 
\end{align}
where $u=u(t, r) \in \mathbb{C}$,   $r=|x| \in(0, \infty)$ is the radial coordinate in $\mathbb{R}^{D}$, and $\Delta:=\partial_{r}^{2}+(D-1) r^{-1} \partial_{r}$ is the radial Laplacian in $\mathbb{R}^{D}$. 

In this paper we study solutions that remain uniformly bounded in the energy space $\mathcal{E}$, which is defined via the norm,
\begin{align*}
\|u\|_{\mathcal{E}}^{2}:=\int_{0}^{\infty}\left[\left(\partial_{r} u(r)\right)^{2}+\frac{(u(r))^{2}}{r^{2}}\right] r^{D-1} \mathrm{~d} r .
\end{align*}

Given finite energy data, we denote the maximal forward time of existence by \( T_+ > 0 \). The nonlinear energy functional associated with $\eqref{eqn:GL}$ is given by
\[
E(u) := \frac{1}{2}\int_0^\infty  \left| \partial_r u(r) \right|^2 r^{D-1} \mathrm{d}r - \frac{1}{\frac{2D}{D-2}} \int_0^\infty |u(r)|^{\frac{2D}{D-2}} r^{D-1} \mathrm{d}r.
\]

Solutions to $\eqref{eqn:GL}$ are invariant under the scaling
\[
u(t, r) \mapsto u_\lambda(t, r) := \frac{1}{\lambda^{\frac{D-2}{2}}} u \left( \frac{t}{\lambda^2}, \frac{r}{\lambda} \right), \quad \lambda > 0
\]
and $\eqref{eqn:GL}$ is called energy-critical since \( E(u) = E\left( u_\lambda \right) \). We have the following formal energy identity
\[
E(u(t_2)) + \Re z \int_{t_1}^{t_2} \left\| \mathcal{T}(u(t)) \right\|_{L^2}^2 \mathrm{d}t = E(u(t_1))
\]

for each $t_1,t_2\in [0,T_+)$, where \( \mathcal{T}(u) := \partial_r^2 u + \frac{D - 1}{r}\partial_r u + |u|^{\frac{4}{D - 2}}u \). 

The stationary solutions to ~\eqref{eqn:GL} are characterized by the stationary equation
\begin{align}
    \label{eqn:stationary}
    -\Delta W(x) = |W(x)|^{\frac{4}{D - 2}}W(x),\quad x \in \mathbb{R}^D.
\end{align}
It's well-known that the Aubin–Talenti solution \( W: \mathbb{R}^D \to \mathbb{R} \) in the form
\[
W(x) := \left( 1 + \frac{|x|^2}{D(D - 2)} \right)^{-\frac{D - 2}{2}}
\]
is the unique radial solution to ~\eqref{eqn:stationary} up to sign and scaling. We will abuse notation slightly and write $W(x)=W(r)$ with $r=|x|$.

\subsection{Main Results}

We prove the following theorem. 

\begin{mainthm}[Soliton resolution]
    \label{thm:main}
    
     Let $D \geq 3$ , and let ${u}(t)$ be the solution to $\eqref{eqn:GL}$ with initial data
     $u(0)=u_0\in \mathcal{E}$, defined on its maximal interval of existence $[0, T_{+})$.  

     Suppose that 
$$
\limsup _{t \rightarrow T_{+}}\|{u}(t)\|_{\mathcal{E}}<\infty ,
$$

then

 \emph{({Global solution})} If $T_{+}=\infty$, there exist a time $T_0>0$, an integer $N \geq 0$, continuous functions $\lambda_1(t), \ldots, \lambda_N(t) \in C^0\left([T_0, \infty),(0,\infty)\right)$, phases $\theta_1(t), \ldots, \theta_N(t) \in C^0([T_0,\infty),\bbS)$and  ${g}(t) \in \mathcal{E}$ defined by
$$
{u}(t)=\sum_{j=1}^N e^{i\theta_j(t)} {W}_{\lambda_j(t)}+{g}(t),
$$
such that
$$
\|{g}(t)\|_{\mathcal{E}}+\sum_{j=1}^N \frac{\lambda_j(t)}{\lambda_{j+1}(t)} \rightarrow 0 \text { as } t \rightarrow \infty,
$$
where above we use the convention that $\lambda_{N+1}(t)=\sqrt{t}$;

 \emph{({Blow-up solution})} If $T_{+}<\infty$, there exist a time $T_0<T_{+}$, an integer $N \geq 0$,  a body map  ${u}^* \in \mathcal{E}$,  continuous functions $\lambda_1(t), \ldots, \lambda_N(t) \in C^0\left([T_0, T_{+}),(0,\infty)\right)$, phases $\theta_1(t), \ldots, \theta_N(t) \in C^0([T_0,T_{+}),\bbS)$ and ${g}(t) \in \mathcal{E}$ defined by
$$
{u}(t)=\sum_{j=1}^N e^{i\theta_j(t)}{W}_{\lambda_j(t)}+{u}^*+{g}(t),
$$
such that
$$
\|{g}(t)\|_{\mathcal{E}}+\sum_{j=1}^N \frac{\lambda_j(t)}{\lambda_{j+1}(t)} \rightarrow 0 \text { as } t \rightarrow T_{+},
$$
where above we use the convention that $\lambda_{N+1}(t)=\sqrt{T_{+}-t}$.   
\end{mainthm}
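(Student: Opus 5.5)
The plan follows the Jendrej--Lawrie strategy for soliton resolution in parabolic/dispersive energy-critical problems, as adapted to the harmonic map heat flow and the real energy-critical heat equation. The argument has three main stages.

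\emph{Step 1: sequential resolution.} We first establish soliton resolution along some sequence of times. The energy identity gives
\[
\Re z\int_{0}^{T_+}\|\mathcal{T}(u(t))\|_{L^2}^2\,\mathrm{d}t\le E(u_0)-\liminf_{t\to T_+} E(u(t))<\infty,
\]
and the right side is finite because $\limsup\|u(t)\|_{\mathcal{E}}<\infty$ together with Sobolev bounds the potential energy. Hence there exist $t_n\to T_+$ with $\sqrt{\lambda_{N+1}(t_n)}\,\|\mathcal{T}(u(t_n))\|_{L^2}\to0$. Here $\lambda_{N+1}(t)=\sqrt t$ or $\sqrt{T_+-t}$, and we use that the integrand is not integrable against $\mathrm{d}t/t$ (resp.\ $\mathrm{d}t/(T_+-t)$). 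In the blow-up case we first extract the body map $u^*$. Parabolic smoothing and local energy estimates away from the origin show that $u(t)\to u^*$ in $\mathcal{E}$ on $\{r\ge \epsilon\}$ for every $\epsilon>0$, with $u^*\in\mathcal{E}$. We then apply a compactness (profile) argument for radial maps with small tension. Radial bounded-energy functions with $\lambda^{2}\|\mathcal{T}\|^2_{L^2}\to0$ in a region decompose, via radial Strauss-type pointwise bounds and the ODE analysis of $-\Delta\phi=f(\phi)$, into finitely many rescaled copies $e^{i\theta_j}W_{\lambda_j}$ plus a remainder that is small in $\mathcal{E}$ on each annulus. No energy is lost in the necks, because radial stationary solutions are classified as $e^{i\theta}W_\lambda$ (the phase is constant since the equation is $U(1)$-invariant and radial ODE solutions with finite energy have constant phase). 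The scales separate, and the remainder at scale $\sqrt t$ (resp.\ $\sqrt{T_+-t}$) is controlled by the exterior decay of the free heat-type flow $e^{tz\Delta}$. This gives the conclusion of Theorem~\ref{thm:main} along $t_n$, with the number $N$ determined by energy quantization: $E=N\,E(W)+\text{radiation}$.

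\emph{Step 2: continuous-in-time resolution.} This is the main obstacle. We define the localized distance
\[
\mathbf d(t)=\inf_{\theta_j,\lambda_j}\Bigl(\bigl\|u(t)-u^*-\textstyle\sum_j e^{i\theta_j}W_{\lambda_j}\bigr\|_{\mathcal{E}}^2+\sum_j(\lambda_j/\lambda_{j+1})^{\frac{D-2}{2}}\Bigr)
\]
(with $u^*=0$ in the global case) and aim to show $\mathbf d(t)\to0$. Argue by contradiction and suppose there are collision intervals $[a_n,b_n]$ on which $\mathbf d$ leaves a small $\epsilon$-neighborhood while staying $\eta$-close at the endpoints. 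On such intervals we perform a modulation analysis. We decompose
\[
u=u^*+\sum_{j} e^{i\theta_j}W_{\lambda_j}+g,
\]
imposing orthogonality of $g$ to $\Lambda W_{\lambda_j}$ and $iW_{\lambda_j}$. This uses the coercivity of the linearized operator $-\Delta-\frac{D+2}{D-2}W^{4/(D-2)}$ (real part) and $-\Delta-W^{4/(D-2)}$ (imaginary part) on the orthogonal complement of the negative direction and the kernel. The remaining unstable direction $\mathcal Y$ is controlled using the gradient-flow structure: the energy decreases, so any positive component along $\mathcal Y$ would force an energy drop exceeding what remains.

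The key estimate is the lower bound $\|\mathcal{T}(u(t))\|_{L^2}^2\gtrsim \mathbf d(t)\,\lambda_{K}(t)^{-2}$, where $K$ is the smallest separating scale. This follows from the expansion of $\mathcal{T}$ near a multi-bubble, in which the interaction term between adjacent bubbles $W_{\lambda_j},W_{\lambda_{j+1}}$ is of size $(\lambda_j/\lambda_{j+1})^{(D-2)/2}$. It is combined with modulation equations of the form $|\lambda_j'|\lesssim\lambda_j^{-1}\,\mathbf d^{1/2}$ and $|\theta_j'|\lesssim \lambda_j^{-2}\mathbf d^{1/2}$. Integrating over a collision interval, the time needed for $\mathbf d$ to move from $\eta$ to $\epsilon$ is $\gtrsim\lambda_K^2$, which costs energy $\gtrsim\epsilon$ on each interval. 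Summing over infinitely many disjoint intervals contradicts finiteness of the dissipation integral. Dimensions $D=3,4$ (slow decay of $W$, $\Lambda W\notin L^2$) will need truncated modulation directions and logarithmic corrections; this is where I expect most technical effort.

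\emph{Step 3: continuity.} Once $\mathbf d(t)\to0$, the parameters given by the implicit function theorem in the modulation step are continuous in $t$ on $[T_0,T_+)$. We define $g(t)$ by the decomposition itself, and the stated smallness of $\|g\|_{\mathcal{E}}+\sum_j\lambda_j/\lambda_{j+1}$ follows. This completes the proof of Theorem~\ref{thm:main}.
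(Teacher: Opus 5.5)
\textbf{Verdict: genuine gaps in Step 2.} Your overall architecture matches the paper: sequential resolution, collision intervals, a lower bound on their length and on the dissipation rate at scale $\lambda_K$, and divergence of the dissipation integral. But both quantitative inputs of your Step 2 are justified by arguments that do not work.

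\textbf{Duration of collision intervals.} The lower bound on the time $\mathbf d$ needs to go from $\epsilon$ to $\eta$ cannot come from integrating the modulation equations. The bound $|\lambda_j'|\lesssim\lambda_j^{-1}\mathbf d^{1/2}$ only says the scales barely move on time intervals $\ll\lambda_K^2$. Meanwhile $\mathbf d$ can grow through $g$ alone: bubbles $1,\dots,K-1$ can collide on their own time scale $\lambda_{K-1}^2\ll\lambda_K^2$ while $W_{\lambda_K}$ sits still. Your ``smallest separating scale'' does not exclude this. The missing idea is to define $K$ as the \emph{minimal} number for which collision intervals with $N-K$ exterior bubbles exist (Definition~\ref{def:K-choice}). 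Suppose $d_n-c_n\ll\lambda_K(c_n)^2$, and choose $r_n$ with $\lambda_{K-1}(c_n)+(d_n-c_n)^{1/2}\ll r_n\ll\lambda_K(c_n)$. Then short-time propagation (Lemma~\ref{lem:un-seq}) keeps the $K$-th bubble frozen on $r\ge r_n$, so $[c_n,d_n]\in\mathcal C_{K-1}(\zeta_n,\eta)$. This contradicts minimality, which is Lemma~\ref{lem:collision-duration}.

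\textbf{Dissipation rate.} The static estimate $\|\mathcal T(u)\|_{L^2}^2\gtrsim\mathbf d\,\lambda_K^{-2}$, obtained by expanding $\mathcal T$ near a multi-bubble, is false however you treat $\mathcal Y$. The $L^2$ norm of the tension and the $\mathcal E$ norm scale differently. An error $g$ with $\|g\|_{\mathcal E}\sim\epsilon$ concentrated at a scale $\mu$ with $\lambda_K\ll\mu\ll\lambda_{K+1}$ is compatible with all collision-interval constraints. Yet it contributes tension only of size $\epsilon/\mu\ll\epsilon/\lambda_K$.

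What rules such configurations out for the actual solution is not linear coercivity but the energy limit $E(u(t))\to NE(W)+E(u^*)$. The paper exploits this qualitatively:
\begin{itemize}
\item If $\lambda_K(s_n)\|\partial_t u(s_n)\|_{L^2}\to0$, Lemma~\ref{lem:compactness} gives $\delta_{r_n\lambda_K(s_n)}(u(s_n))\to0$.
\item Lemma~\ref{lem:delta-to-d} then splits $u(s_n)$ into inner, middle and outer pieces and uses that a small-norm middle piece has nonnegative energy.
\item The energy accounting then forces exactly $K$ inner bubbles and vanishing middle energy, i.e.\ $\mathbf d(s_n)\to0$.
\end{itemize}

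This route also makes your orthogonality/coercivity machinery, the $\mathcal Y$-control heuristic and the $D=3,4$ logarithmic corrections unnecessary. The paper notes that $\mathcal Y$-control is hard to run with complex $z$. It uses only the crude bound on $\lambda_j'$, with compactly supported $\mathcal Z_1,\mathcal Z_2$.

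\textbf{Minor.} In Step 1 you need $\lambda_{N+1}(t_n)\|\mathcal T(u(t_n))\|_{L^2}\to0$, not $\sqrt{\lambda_{N+1}(t_n)}\,\|\mathcal T(u(t_n))\|_{L^2}\to0$.
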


\begin{rem}
    The Soliton Resolution Conjecture predicts that generic solutions to nonlinear dispersive equations will decouple into a superposition of modulated solitons , a radiative term and a term going to zero in energy space asymptotically in time. This conjecture arose from numerical simulations, e.g., \cite{FPU55}, \cite{ZK65}, the theory of integrable systems,  \cite{Sch86}, and the bubbling theory of harmonic maps in the elliptic and parabolic settings \cite{Stru85}, \cite{Qin95}, \cite{QT97}, \cite{To04}. 

The Soliton Resolution Conjecture has been established for some classes of PDEs. For the focusing energy critical nonlinear wave equations, the full resolution was established for radial data in \cite{DKM23}, \cite{JL23a}, while the conjecture for general data was only established locally in space in $D=3,4,5$ for a well-chosen sequence of time in \cite{DJKM17}. For energy critical wave maps from $\R^2$ to $\bS^2$, the full resolution was established for equivariant solutions in \cite{JL25}, while the conjecture without the symmetry assumption was only established continuously in time for small blow up solutions in \cite{DJKM18}. In the parabolic setting, the conjecture was proven for the equivariant harmonic map heat flow \cite{JL23b} and for the radial focusing energy-critical heat equation \cite{Ary25ra}. The conjecture holds up to subsequence for any time sequences for harmonic map heat flow and focusing energy critical heat equation, see \cite{JLS25}, \cite{Ary25}. For results on other types of equations, see  \cite{KKO23}, \cite{KK24}.
\end{rem}

\begin{rem}
    The general Ginzburg-Landau equation is of the following form:
    \begin{equation}
        \p_t u=\gamma u+(a+i\al)\De u-(b+i\be)|u|^{p-1}u,
    \end{equation}

    where $u(t,x)$ is a complex valued function of $(t,x)\in \R_+\times \R^D$, $a,b,\al,\be\in \R$, and $p>1$. This class of equations has drawn the attention of many mathematicians and physicists due to its close relation to many phenomena in fluid dynamics and superconductivity, etc. 

    In this article, we focus on the special case ~\eqref{eqn:GL}, as the study concerning the existence and uniqueness of ground states for the general Ginzburg-Landau equation is complicated. Indeed, such a kind of special Ginzburg-Landau equation has been studied by Cazenave \cite{Caz13}, see also \cite{CCZ23}.

\end{rem}

\begin{rem}
    The proof of Theorem \ref{thm:main} closely follows that in \cite{Ary25ra}. This similarity stems from the analogous behavior of the Ginzburg-Landau and heat equations in the energy space. However, this analogy does not directly carry over to the geometric flow setting: obtaining $L^{\infty}$ estimates for Landau-Lifshitz flows seems to be more involved than for heat flows. We plan to address this in future work.
\end{rem}

\subsection{Strategy of the Proof}

We first prove the resolution along a well-chosen sequence of time, which requires a compactness lemma and characterization of the body map. Then we argue by contradiction to prove the resolution in continuous time. The main tools we use are modulation analysis and analysis for collision intervals developed in \cite{JL23a}, \cite{JL23b}, \cite{JL25}.  Now we give an informal description of the proof of Theorem $\ref{thm:main}$. 

\bigskip

We first prove the Soliton Resolution Conjecture along a sequence of time in Section ~\ref{sec:3}. Since the Ginzburg-Landau equation ~\eqref{eqn:GL} behaves like the heat equation in energy, we establish some lemmas that are analogous to those in \cite{JL23b}. For solutions that blow up at finite time, the first goal is to identify the behavior of the solution at regular points where the energy doesn't concentrate. We follow the method in \cite{Stru85}, \cite{Qin95} to obtain a body map, the only difference is that we need an exterior energy estimate due to the non-compactness of the domain, see ~\eqref{outer-ball-estimate}. To extract the solitons at the singular point, we use the compactness lemma in \cite{Law23}, where the argument was conducted for heat equations but still works for the Ginzburg-Landau equation. Finally we use some monotonicity formulae as in \cite{JL23b} to obtain the resolution in a sequence of time. For global solutions, the proof is similar, except for the control of energy away from the origin, which is obtained through Lemma ~\ref{lem:localized-coercivity} as in \cite{Ary25ra}. 

\bigskip

Once we establish the sequential soliton resolution, we use the method in \cite{JL23b} to continue the argument. To make things more explicit, we introduce the following definitions.

\begin{defn}[Multi-bubble configuration]
\label{def:d}
    Given $M\in \bbN$, $\vec{\lambda}=(\lambda_1,\dots,\lambda_M)\in(0,\infty)^M$, and $\vec{\theta}\in \bbT^M$, a multi-bubble configuration is defined by 

    \begin{align}
        &\calW(\vec{\theta},\vec{\lambda}):= \sum_{j=1}^M e^{i\theta_j} W_{\lambda_j} \mif M\geq 1,\\
        &\calW(\vec{\theta},\vec{\lambda}):=0 \mif M=0.
    \end{align}
    
\end{defn}

The sequential soliton resolution allows us to fix the number of solitons, denoted by $N$, in Definition ~\ref{def:d}. Next we introduce a function to measure the localized distance to the N-bubble configuration.

\begin{defn}(Proximity to a multi-bubble)
    For all \( t \in I_+ \), \( \rho \in (0, \infty) \), and \( K \in \{0, 1, \ldots, N\} \), we define the localized multi-bubble proximity function as
\[
\mathbf{d}_K(t; \rho) := \inf_{\vec{\theta}, \vec{\lambda}} \left( \|u(t) - u^* - \mathcal{W}(\vec{\theta}, \vec{\lambda})\|_{\mathcal{E}(\rho)}^2 + \sum_{j=K}^N \left( \frac{\lambda_j}{\lambda_{j+1}} \right)^{\frac{D-2}{2}} \right)^{\frac{1}{2}},
\]
where $u^*=0$ when $T_+=\infty$, \( \vec{\te} := (\te_{K+1}, \ldots, \te_N) \in \bbT^{N-K} \), \( \vec{\lambda} := (\lambda_{K+1}, \ldots, \lambda_N) \in (0, \infty)^{N-K} \), \( \lambda_K := \rho \) and \( \lambda_{N+1} := \sqrt{T_+ - t} \) when \( T_+ < \infty \) and \( \lambda_{N+1} := \sqrt{t} \) when \( T_+ = \infty \). 

The multi-bubble proximity function is defined by \( \mathbf{d}(t) := \mathbf{d}_0(t; 0) \).
\end{defn}

The sequential soliton resolution implies that there exists a sequence of time $t_n\to T_+$ such that $\lim \limits_{n\to \infty} \bfd(t_n)=0$. Moreover, proving Theorem ~\ref{thm:main} reduces to showing that $$\lim_{t\to T_+} \bfd(t)=0.$$

We argue by contradiction. Suppose that $\bfd(t)$ does not converge to $0$, then $\bfd(t)$ is large in some sense along a certain sequence of time. The key insight in \cite{JL23b} is to analyze the dynamic behavior of the solution $u$ along a sequence of time intervals $[a_n,b_n]$ such that $a_n,b_n\to T_+$ and that $u$ is close to the N-bubble configuration at $a_n$ and far away at $b_n$. During this time the interaction of the solitons dominates the dynamic behavior of the solution, and the question can be reduced into  a finite-dimensional problem since the radial stationary solutions to ~\eqref{eqn:GL} are expressed explicitly through the Aubin-Talenti solution.  Here we use the same definition of collision intervals as in \cite{JL23b} to describe such intervals.

\bigskip

We say that an interval $[a, b]$ is a collision interval with parameters $0<\veps< \eta$ and $N-K$ exterior bubbles for some $1 \le K \le N$, if $\bfd(a) \le \veps$,  $\bfd(b) \ge \eta$, and there exists a curve $r = \rho_K(t)$ such that $\bfd_{K}(t;\rho_K(t))\leq \veps$ for all $t\in [a,b]$; see Definition~\ref{def:collision}.  We now define $K$ to be the smallest non-negative integer for which there exists $\eta>0$, a sequence $\veps_n \to 0$,  and sequences $a_n, b_n \to \infty$, such that $[a_n, b_n]$ are collision intervals with parameters $\veps_n, \eta$ and $N-K$ exterior bubbles and are pairwise disjoint. For these intervals we write $[a_n, b_n] \in \calC_K( \veps_n, \eta)$. See Section~\ref{sec:4.1} for the proof that $K$ is well-defined and $\ge 1$, under the contradiction hypothesis.

\bigskip

Along these intervals, by using modulation technique we can derive differential inequalities for the scales of the bubbles that come into collision. For the $K-$th bubble with scale $\lam_K$ that comes into collision, we show that for $n$ large enough, there exists $[c_n,d_n]\subset[a_n,b_n]$ such that
\begin{equation}
    \label{ineq:1}
    d_n-c_n\gtrsim \frac{\lambda_K(c_n)^2}{n}.
\end{equation}

Similar inequalities have been established in \cite{JL23a},\cite{JL23b},\cite{Ary25ra}. The difficulty here is the presence of discrete spectrum of the linearized operator, see Section ~\ref{sec:2.2}. We cannot treat that directly since the presence of $z$ in \eqref{eqn:GL}  will make it difficult to control the time derivatives of discrete components. We use the argument from \cite{Ary25ra} to overcome this obstacle.  The key insight in this step lies in realizing that the localized energy inequalities result in the propagation of small energy over short time intervals behaving as in the wave equation. 

\bigskip

The compactness lemma ~\ref{lem:compactness} , Lemma ~\ref{lem:delta-to-d}, and Lemma ~\ref{lem:cndn} yield that for $n$ sufficiently large, there exists a subinterval $[c_n,d_n]\in [a_n,b_n]$ such that ~\eqref{ineq:1} and $\inf_{t\in [c_n,d_n]} \lam_K(c_n)^2 \| \p_tu(t)\|_{L^2}^2\gtrsim1$ hold. Moreover, $[c_n,d_n]$ are pairwise disjoint. Thus we get that

\[ 
1\gtrsim\int_0^{T_+} \| \p_tu(t)\|_{L^2}^2 \, \ud t\gtrsim\sum_{n\in \bN}\int_{c_n}^{d_n} \| \p_tu(t)\|_{L^2}^2 \, \ud t\gtrsim \sum_{n\in \bN} \frac{1}{n}=\infty,
\]
which is a contradiction and therefore Theorem ~\ref{thm:main} is proved.

\bigskip

The remainder of this paper is organized as follows. Section~\ref{sec:2} reviews several fundamental properties of equation~\eqref{eqn:GL}. Next, we establish the sequential resolution for~\eqref{eqn:GL} in Section~\ref{sec:3}. In Section~\ref{sec:4}, we introduce the notion of collision interval and use modulation technique to obtain a precise decomposition for the solution in such time intervals. Finally, we derive a contradiction in Section~\ref{sec:5} by examining the dynamics of the scaling parameters, thereby completing the proof.

\subsection{Notation}

We use the following notations:

\begin{itemize}
\item Given a function $\varphi(r)$ and $\lambda > 0$, we denote by $\varphi_\lambda(r) = \frac{1}{\lambda^{\frac{D-2}{2}}} \varphi(\frac{r}{\lam})$, the $\dot{H}^1$-invariant rescaling, and by $\varphi_{\underline{\lam}}(r) = \frac{1}{\lambda^{\frac{D}{2}} }\varphi(\frac{r}{\lambda})$ the $L^2$-invariant rescaling. Furthermore, we set $\Lambda := r\partial_r + \frac{D-2}{2}$ and $\underline{\Lambda} := r\partial_r + \frac{D}{2}$ as the infinitesimal generators of these scaling.
\item Given two functions $f, g \in L^2((0, \infty), r^{D-1}\mathrm{d}r)$, we define their inner product
\[
\la f \mid g \ra :=\Re \int_0^\infty \bar{f}(r)g(r) r^{D-1}\mathrm{d}r.
\]

\item Given $u \in \mathcal{E}$, we define the modified energy density $\tilde{\mathbf{e}}$ and the localized $\mathcal{E}$ norm as follows
\[
\tilde{\mathbf{e}}(u) := |\partial_r u|^2 + \frac{|u|^2}{r^2}, \quad \tilde{E}(u; r_1, r_2) := \|u\|_{\mathcal{E}(r_1, r_2)}^2 := \int_{r_1}^{r_2} \tilde{\mathbf{e}}(u) r^{D-1}\mathrm{d}r. 
\]
By convention, $\mathcal{E}(r_0) := \mathcal{E}(r_0, \infty)$ for $r_0 > 0$. We similarly define the nonlinear energy density and the localized nonlinear energy as follows
\[
\mathbf{e}(u) := \frac{|\partial_r u|^2}{2} - \frac{|u|^{\frac{2D}{D-2}}}{\frac{2D}{D-2}}, \quad E(u; r_1, r_2) := \int_{r_1}^{r_2} \mathbf{e}(u) r^{D-1}\mathrm{d}r,
\]
where $D \geq 3$, and $2^* := \frac{2D}{D-2}$.
\item Throughout the paper, the function $\chi \in C_c^\infty([0, \infty))$ denotes a smooth radial cut-off function, supported on $r \leq 2$ and $\chi \equiv 1$ when $r \leq 1$. Furthermore we denote $\chi_R(r) := \chi(\frac{r}{R})$.
\item The inequality $A \lesssim B$ means that $A \leq CB$ and $A \gtrsim B$ means that $A \geq cB$ for some constants $C, c > 0$ possibly depending on the number of bubbles $N$ and the dimension $D$. We write $A \ll B$ if $\lim_{n \to \infty} \frac{A}{B} = 0$ for two sequences of numbers $\{A_n\}$ and $\{B_n\}$, or if $\lim \limits_{t\to \infty} \frac{A}{B}$ for two functions of time $A(t)$ and $B(t)$ .
\end{itemize}

\subsection{Acknowledgment}  The author would like to thank Prof. Lifeng Zhao for introducing this interesting problem and so many useful discussions and Dr. Shrey Aryan for his helpful advice. 

\section{Preliminaries} \label{sec:2}
\subsection{Local Cauchy Theory}

The solution to $\eqref{eqn:GL-non-radial}$ is defined in the following sense.

\begin{defn}
		A function $u: [0, T) \times \mathbb{R}^d  \to \mathbb{C}$ for $0< T \le \infty$ is a strong $\dot{H}^1$ solution of \eqref{eqn:GL-non-radial} if for all $t\in [0,T)$,
		$u \in \left(C_t^0 \dot{H}^1_x \cap L_{t,x}^\frac{2(d+2)}{d- 2} \right)([0,t] \times \mathbb{R}^d ) $, and obeys the Duhamel formula
		\begin{align}
			u(t) = e^{t z \Delta } u_0 + \int_0^t e^{(t - s) z \Delta } z f(u(s)) \,\mathrm{d}s.
		\end{align}
	\end{defn}
    
We state a proposition about the local well-posed theory for the Ginzburg-Landau equation $\eqref{eqn:GL-non-radial}$ in the energy space. 

\begin{prop}
\label{prop:cauchy}
    Assume \( u_0 \in \dot{H}^1(\mathbb{R}^d) \). Then the following conclusions hold:
\begin{enumerate}
    \item Local existence: There exists a unique, maximal-lifespan solution to $\eqref{eqn:GL-non-radial}$ on \([0, T_{\max}(u_0)) \times \mathbb{R}^d\).
    \item Blow-up criterion: If \( T_{\max} < \infty \), then \( \|u\|_{S([0, T_{\max}(u_0)))} = \infty \), where $\|u\|_S=\|u\|_{L_{t,x}^{\frac{2(D+2)}{D-2}}}$.
    \item Unconditional uniqueness: If \( u_1, u_2 \) are two solutions of $\eqref{eqn:GL-non-radial}$ on \([0, T)\) with \( u_1(0) = u_2(0) \), then \( u_1 = u_2 \).
    \item Small data global existence: There is \( \varepsilon_0 > 0 \) such that if \( \|e^{tz\Delta}u_0\|_{S(\mathbb{R}_+)} \leq \varepsilon_0 \), then the solution \( u \) is global and \( \|u\|_{S(\mathbb{R}_+)} \lesssim \varepsilon_0 \). This holds in particular when \( \|u_0\|_{\dot{H}^1} \) is sufficiently small.
    \item 
    The energy $E(u(t))$ is absolutely continuous and non-increasing as a function of $t\in[0,T_+)$, and for any $t_1\leq t_2\in [0,T_+)$, there holds,
    \begin{align}
    \label{eq:tension-L2}
    E(u(t_2))+\Re z \cdot \int_{t_1}^{t_2} \int_0^{\infty} |\p_t u(t,r)|^2 r^{D-1} \mathrm{d}r\mathrm{d}t=E(u(t_1)).
    \end{align} 
    In particular, for solution uniformly bounded in $\calE$, we have
    \begin{align}
        \int_{0}^{T_+} \int_0^{\infty} |\p_t u(t,r)|^2 r^{D-1} \mathrm{d}r\mathrm{d}t< \infty.
    \end{align} 
    
    \item 
    \begin{align}
    \label{eq:continuous-dependence}
    \|u(t) - v(t)\|_{\dot{H}^1} + t^{(D-2)/4} \|u(t) - v(t)\|_{L^\infty} \leq C \|u_0 - v_0\|_{\dot{H}^1}
    \end{align}  for all $t\in (0,T)$ where $T=\min \{T_+(u_0),T_+(v_0)\}$ and $C$ can be estimated in terms of $\|u_0\|_{\dot{H}^1}$ and $\|v_0\|_{\dot{H}^1}$
\end{enumerate}
\end{prop}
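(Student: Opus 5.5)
We would prove Proposition~\ref{prop:cauchy} by following the standard Kato-type fixed point argument for the energy-critical heat equation, with the heat semigroup replaced by $e^{tz\Delta}$. The key structural fact is that for $\Re z>0$ the kernel of $e^{tz\Delta}$ is the complex Gaussian $(4\pi zt)^{-D/2}e^{-|x|^2/(4zt)}$. Its modulus is a multiple of an ordinary heat kernel at time $t/\Re(z^{-1})=t/\Re z$ (recall $|z|=1$), up to the constant factor $(\Re z)^{-D/2}$. Consequently $e^{tz\Delta}$ satisfies all the $L^p\to L^q$ smoothing estimates of the heat semigroup, uniformly in $t$, together with the parabolic Strichartz (maximal regularity) estimates in $S=L^{\frac{2(D+2)}{D-2}}_{t,x}$. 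Since $e^{tz\Delta}$ commutes with $\nabla$, the same holds for $\nabla u$ in $L^{\frac{2(D+2)}{D}}_{t,x}$.

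Items (1), (2) and (4) follow from a contraction in the space $X=\{u:\ \|u\|_S+\|\nabla u\|_{L^{2(D+2)/D}_{t,x}}\le 2\eta\}$ on $[0,T]$, with $T$ chosen so that $\|e^{tz\Delta}u_0\|_S\le\eta$. Such a $T$ exists by dominated convergence. We use the nonlinear estimate $\|\nabla f(u)\|_{L^{2(D+2)/(D+4)}}\lesssim\|u\|_S^{4/(D-2)}\|\nabla u\|_{L^{2(D+2)/D}}$ and the corresponding difference estimate; in low regularity with $D>6$ the difference should be controlled in $S$ only. The blow-up criterion comes from extending the solution whenever $\|u\|_S<\infty$, by dividing the time interval into pieces of small $S$-norm. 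Small data global existence is the case $T=\infty$. For item (3), unconditional uniqueness, we follow Brezis–Cazenave: given two $C_t\dot H^1$ solutions, we show their difference vanishes on a short interval. The argument uses the $L^\infty$ smoothing of $e^{tz\Delta}$ and the fact that $\|u(t)-e^{tz\Delta}u_0\|_{\dot H^1}\to0$, so the solution can be split into a small part plus a bounded part. We expect this item to be the most delicate, but the heat-equation proof transfers verbatim, because it only uses kernel bounds.

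For item (5), we first work with smooth, decaying approximations, where the computation is legitimate. There
\[
\frac{\ud}{\ud t}E(u)=-\la \Delta u+f(u)\mid \p_t u\ra=-\la z^{-1}\p_tu\mid\p_tu\ra=-\Re(\bar z)\|\p_tu\|_{L^2}^2,
\]
and since $|z|=1$ we have $\Re \bar z^{-1}... $, more precisely $\Re(z^{-1})=\Re z$. Integrating in time gives \eqref{eq:tension-L2}. We then pass to $\dot H^1$ data using continuous dependence and lower semicontinuity. Parabolic regularization gives $\p_tu\in L^2_{t,x}$ locally on $(0,T_+)$, which makes the identity exact and $E$ absolutely continuous. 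The bound on $\int\|\p_tu\|^2$ then follows from $E(u(t))\gtrsim-\sup_t\|u(t)\|_{\calE}^{2^*}$, via Sobolev.

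For item (6), the $\dot H^1$ part is the Lipschitz dependence from the fixed point, extended over $[0,t]$ by finitely many subintervals. The number of subintervals depends on the $S$-norms and hence on the data, which is how the constant $C$ enters. For the $L^\infty$ part, we write the difference via Duhamel. The linear term obeys $\|e^{tz\Delta}(u_0-v_0)\|_{L^\infty}\lesssim t^{-(D-2)/4}\|u_0-v_0\|_{\dot H^1}$, by Sobolev embedding into $L^{2^*}$ followed by the $L^{2^*}\to L^\infty$ smoothing. The nonlinear term is handled by a bootstrap in the weighted norm $\sup_s s^{(D-2)/4}\|\cdot\|_{L^\infty}$, splitting the Duhamel integral into $s\in[0,t/2]$ and $[t/2,t]$.
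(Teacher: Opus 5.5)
Your sketch of (1)--(5) is a reasonable reconstruction of the standard Strichartz/maximal-regularity theory that the paper simply takes from Theorem 2.3 of \cite{CCZ23}, using the domination of $|e^{tz\Delta}|$ by a rescaled heat kernel. Your weighted $L^\infty$ bootstrap for (6), which splits the Duhamel integral at $t/2$, is the Brezis--Cazenave argument the paper invokes via \cite{BC96}.

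\textbf{The gap is in (6).} The statement claims \eqref{eq:continuous-dependence} for \emph{all} $t\in(0,T)$, with $T=\min\{T_+(u_0),T_+(v_0)\}$ and $C$ depending only on $\|u_0\|_{\dot H^1}$ and $\|v_0\|_{\dot H^1}$. Neither of your steps yields this.

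Iterating the Lipschitz bound over subintervals gives a constant depending on $\|u\|_{S([0,t])}$ and $\|v\|_{S([0,t])}$, and the perturbative step also needs $\|u_0-v_0\|_{\dot H^1}$ small relative to these. In the critical setting these norms are not functions of the $\dot H^1$ norms of the data, and they are unbounded as $t\to T$ whenever $\|u\|_{S([0,T))}=\infty$.

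Likewise, your bootstrap in $\sup_{s\le t}s^{(D-2)/4}\|\cdot\|_{L^\infty}$ closes only while $\sup_{s\le t}s^{(D-2)/4}\big(\|u(s)\|_{L^\infty}+\|v(s)\|_{L^\infty}\big)$ is small. That holds only for $t\le T_0$, with $T_0$ depending on the profile of $u_0$ rather than on its norm.

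This cannot be repaired, because (6) as stated is false. Take $u_0=W_\lambda$ and $v_0=0$. Then $u(t)\equiv W_\lambda$, $v\equiv 0$ and $T=\infty$. The left side of \eqref{eq:continuous-dependence} equals $\|W\|_{\dot H^1}+t^{(D-2)/4}\lambda^{-(D-2)/2}$, while the right side is the fixed number $C\|W\|_{\dot H^1}$. The left side is unbounded as $t\to\infty$, and also at fixed $t$ as $\lambda\to0$. Moreover, with $v_0=0$ the uniform bound together with (2) would exclude finite-time blow-up altogether, which does occur, e.g.\ for $z=1$.

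What your argument actually gives, and what Theorem 1 of \cite{BC96} provides, is a local statement. For each $u_0$ there exist $T_0,\rho>0$ and $C$, depending on $u_0$ itself, such that the estimate holds for $t\in(0,T_0]$ whenever $\|v_0-u_0\|_{\dot H^1}\le\rho$. More generally, on any compact $[0,T']\subset[0,T)$ the $\dot H^1$ part holds with $C$ depending on the data norms and on $\|u\|_{S([0,T'])}$, $\|v\|_{S([0,T'])}$. You should state and prove (6) in that form.

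\textbf{A smaller point in (5).} To reach $t_1=0$ you need $\partial_t u\in L^2((0,t_2)\times\mathbb{R}^D)$, not just locally in $(0,T_+)$. This follows from $L^2$ maximal regularity: since $|f(u)|^2=|u|^{2(D+2)/(D-2)}$, we have $f(u)\in L^2_{t,x}$ exactly when $u\in S$.
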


\begin{proof}
    For the first five terms, see Theorem 2.3 in \cite{CCZ23}. The proof of the last term is an analogous of the proof of Theorem 1 in \cite{BC96}.
\end{proof}

\subsection{Linearization Around a Ground State}
\label{sec:2.2}

 We denote $\calL_{\calW}$ the linearization of $z(\Delta u +f(u))$ near $u=\calW(\vec{\theta},\vec{\lam})$. In particular, we write $\calL_{\lam}=\calL_{\calW}$ for $\calW=W_{\lam}$ and 
$\calL=\calL_{1}$.

Firstly, we give an explicit expression of the linearization of $z(\Delta u +f(u))$

We will write indifferently $ f = f_1 + if_2 $ or $  f = \begin{pmatrix} f_1 \\ f_2 \end{pmatrix} $  for a complex valued function $  f $ with real part $f_1$ and imaginary part $f_2$.

Recall that for $u\in \mathbb{C}$ we denote $f(u)=|u|^{\frac{4}{D-2}}u$ and $F(u)=\frac{D-2}{2D}|u|^{\frac{2D}{D-2}}$.  For $u\in \mathbb{C}$ we define the $\mathbb{R}$-linear operator $f'(u):\mathbb{C}\to \mathbb{C}$ by the following  formula:
\[
f'(u)g:=  \begin{cases}
    |u|^\frac{4}{N-2}\Big(g + \frac{4}{N-2}u\Re(u^{-1}g)\Big) & u\neq 0, \\
    0 & u=0.
\end{cases} 
\]

With this notation the linearization of $z(\Delta u +f(u))$ near $u$ can be expressed as $L \veps = z\Delta \veps +zf'(u)\veps$.

We will express  $\calL$, the linearization of $z(\Delta u +f(u))$ near $u=W$ in the matrix form. We introduce the following notation: 
\[
V^+ := -pW^{p-1}, \quad V^- := -W^{p-1}, \quad L^+ := -\Delta + V^+, \quad L^- := -\Delta + V^-.
\]
Then for $g=g_1+ig_2 \in \calE $, 
\[
\calL g = \begin{pmatrix} a & -b \\ b & a \end{pmatrix} \begin{pmatrix} -L_+ & 0 \\ 0 & -L_- \end{pmatrix} \begin{pmatrix} g_1 \\ g_2 \end{pmatrix}.
\]

It is known that for all $g \in \cE$ there holds $\la g, L^- g\ra \geq 0$ and $\ker L^- = \spn(W)$.
The operator $L^+$ has one simple strictly negative eigenvalue that we denote $-\ka^2<0$( where $\ka>0$) and, 
restricting to radially symmetric functions, $\ker L^+ = \spn(\Lambda W)$. Denote $\calY$ as the normalized (in $L^2$) eigenfunction associated to the eigenvalue $-\ka^2$

One can show that there exist real functions $\cY^{(1)}, \cY^{(2)} \in \cS$ and a real number $\nu > 0$ such that
\begin{equation}
  \label{eq:Y1Y2}
  L^+ \cY^{(1)} = -\nu \cY^{(2)}, \qquad L^- \cY^{(2)} = \nu \cY^{(1)}.
\end{equation}
(the proof given in   for $N = 5$ works in any dimension $N \geq 5$).

\subsection{Multi-Bubble Configuration}
Next, we recall some facts about solutions near multi-bubble configurations. Similar results for real functions were treated in  \cite{JL23b}

Fix any non-negative function $\calZ_1,\calZ_2\in C_c^{\infty}((0,\infty))$ such that the following holds
\begin{align}\label{aux}
    \langle \mathcal{Z}_1 \mid \Lambda W\rangle &> 0, \qquad \langle \mathcal{Z}_1 \mid \mathcal{Y}\rangle = 0, \\
    \langle \mathcal{Z}_2 \mid W\rangle &> 0.
\end{align}

We define the following proximity function to measure how much we deviate from a multi-bubble configuration. 
\begin{defn}
\begin{align}
\label{eq:d-def} 
\bfd_M(  v) := \inf_{\vec {\theta}, \vec \lam}  \Big( \|  v -  \calW( \vec {\theta}, \vec \lam) \|_{\E}^2 + \sum_{j =1}^{M-1} \Big( \frac{\lam_{j}}{\lam_{j+1}} \Big)^{\frac{D-2}{2}} \Big)^{\frac{1}{2}}.
\end{align} 
where the infimum is taken over all vectors $\vec \lam = (\lam_1, \dots, \lam_M) \in (0, \infty)^M$ and all $\vec \theta = \{ \theta_1, \dots, \theta_M\} \in \bbT^M$. 
\end{defn} 

We now state a static modulation lemma in the same spirit of  \cite{JL23a}.

\begin{lem}\label{lem:static-mod} Let $M \in \N$. 
There exists $\eta, C>0$ with the following properties. Let $\te>0$,  and let $ v \in  \calE$   be such that 
\begin{align}
\label{eq:v-M-bub} 
\bfd_M(  v)  \le \eta, \mand E(  v) \le ME(W) + \te^2. 
\end{align} 
Then, there exists a unique choice of $\vec \lam = ( \lam_1, \dots, \lam_M) \in  (0, \infty)^M$, $\vec \theta = \{ \theta_1, \dots, \theta_M\} \in \bbT^M$, and $g \in  \calE$, such that
\begin{align}
\label{eq:v-decomp} 
    v &=   \calW(  \vec \theta, \vec \lam) +  g, \quad 
   0  = \La i e^{i\theta_j} \calZ_{1,\lam_j}   \mid g\Ra=\La e^{i\theta_j} \calZ_{2,\lam_j}   \mid g\Ra  \  , \quad \forall j = 1, \dots, M, 
\end{align}
   along with the estimates, 
\begin{align}
\label{eq:g-bound-0}
\bfd_M(  v)^2 &\le \|   g \|_{\E}^2  + \sum_{j =1}^{M-1} \Big( \frac{\lam_{j}}{\lam_{j+1}} \Big)^{\frac{D-2}{2}}  \le C \bfd_M(  v)^2.
\end{align} 

\end{lem}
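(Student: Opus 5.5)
The plan is to use the implicit function theorem in $2M$ real parameters $(\vec\theta,\vec\lam)$, applied near a near-optimal configuration, with quantitative control coming from the small parameter $\bfd_M(v)$.

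\textbf{Step 1: a near-minimizer.} Given $v$ with $\bfd_M(v)\le\eta$, pick $(\vec\theta^0,\vec\lam^0)$ with
\[
\|v-\calW(\vec\theta^0,\vec\lam^0)\|_\calE^2+\sum_{j<M}(\lam^0_j/\lam^0_{j+1})^{\frac{D-2}{2}}\le 2\bfd_M(v)^2 .
\]
Order the scales so that $\lam^0_1\ll\dots\ll\lam^0_M$; this is possible since the scale-ratio sum is small. Write $g^0=v-\calW(\vec\theta^0,\vec\lam^0)$.

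\textbf{Step 2: the map.} Define $\Phi:\bbT^M\times(0,\infty)^M\times\calE\to\R^{2M}$ by
\[
\Phi_j=\big(\la ie^{i\theta_j}\calZ_{1,\lam_j}\mid v-\calW\ra,\ \la e^{i\theta_j}\calZ_{2,\lam_j}\mid v-\calW\ra\big).
\]
We work in the logarithmic variables $\ell_j=\log\lam_j$ and use the normalization $\lam_j^{-1}$ in the $\calZ$ pairing (an $L^2$-type rescaling), so everything is scale invariant.

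The Jacobian with respect to $(\theta_j,\ell_j)$ at $g=0$ has a diagonal part coming from the derivatives of $e^{i\theta_j}W_{\lam_j}$, namely $ie^{i\theta_j}W_{\lam_j}$ and $-e^{i\theta_j}(\Lambda W)_{\underline{\lam_j}}$ up to normalization. For the auxiliary functions $\calZ_1,\calZ_2$ as written, the diagonal pairings would be $\la i\calZ_1\mid iW\ra=\la\calZ_1\mid W\ra$, $\la\calZ_1\mid\Lambda W\ra$, $\la\calZ_2\mid W\ra$ and $\la\calZ_2\mid \Lambda W\ra$. The intended design (with the roles matched so that the phase direction pairs with $\calZ_2$ against $W$ and the scale direction pairs with $\calZ_1$ against $\Lambda W$) gives a $2\times2$ block that is triangular or nondegenerate with entries $\la\calZ_2\mid W\ra>0$ and $\la\calZ_1\mid\Lambda W\ra>0$. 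I would check this sign and orientation carefully and, if needed, use the freedom that $\calZ_1$ is compactly supported to make the block invertible.

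The off-diagonal blocks are $j\neq k$ interactions. These are $O((\lam_j/\lam_k)^{c})$, because $\calZ$ is compactly supported in $(0,\infty)$ while $W_{\lam_k}$ at a very different scale is either nearly constant (giving a factor $(\lam_j/\lam_k)^{\frac{D-2}{2}}$) or decays. The contribution of $g$ is bounded by $\|g\|_\calE$.

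Hence the Jacobian is a small perturbation of an invertible block-diagonal matrix. A quantitative IFT (Banach fixed point in the $2M$ variables around $(\vec\theta^0,\log\vec\lam^0)$) then yields a unique zero within distance $O(\bfd_M(v))$. Uniqueness holds among parameters in this neighborhood. Global uniqueness, among all choices with small $\|g\|$ and small ratios, follows because any two such configurations must be $O(\eta)$-close in $(\theta,\log\lam)$, by the standard fact that $\|\calW(\vec\theta,\vec\lam)-\calW(\vec\theta',\vec\lam')\|_\calE$ small with decoupled scales forces closeness of parameters.

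\textbf{Step 3: estimates.} The lower bound in \eqref{eq:g-bound-0} is immediate from the definition of $\bfd_M$ as an infimum. For the upper bound, the new parameters differ from the near-minimizer by $O(\bfd_M(v))$, so $\|g\|_\calE\lesssim\|g^0\|_\calE+|\Delta\theta|+|\Delta\ell|\lesssim\bfd_M(v)$, and the ratios change by a factor $1+O(\bfd_M)$.

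\textbf{Role of the energy hypothesis.} In this local-parameter argument the condition $E(v)\le ME(W)+\te^2$ is not essential. I expect it is listed for consistency with the subsequent refined lemma, where it is used to control the unstable direction $\calY$ and the coercivity of $g$.

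\textbf{Main obstacle.} The main obstacle is the uniform (in scale separation) invertibility of the Jacobian, including the precise interaction bounds for cross terms, together with the correct orthogonality structure for complex-valued $W$ with a phase. First I would compute the $2\times 2$ diagonal block explicitly and verify its nondegeneracy using \eqref{aux}.
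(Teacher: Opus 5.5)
Your overall route is the standard argument the paper has in mind; the paper itself only refers to \cite{JL25}. That route consists of a near-minimizer, a quantitative implicit-function or contraction argument in $(\vec\theta,\log\vec\lam)$, off-diagonal interaction bounds of order $(\lam_j/\lam_k)^{\frac{D-2}{2}}$, a perturbation of size $O(\|g\|_{\calE})$, and the two-sided bound read off from the infimum. Those parts are fine, and you are right that the energy hypothesis plays no role here.

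The genuine gap is the step you flag and leave open: the nondegeneracy of the $2\times 2$ diagonal block, which you describe incorrectly. Use the orthogonality conditions exactly as written in \eqref{eq:v-decomp}, and differentiate $\Phi^{(1)}_j=\langle ie^{i\theta_j}\calZ_{1,\lam_j}\mid v-\calW\rangle$ and $\Phi^{(2)}_j=\langle e^{i\theta_j}\calZ_{2,\lam_j}\mid v-\calW\rangle$ at $g=0$, with $\partial_{\theta_j}\calW=ie^{i\theta_j}W_{\lam_j}$ and $\lam_j\partial_{\lam_j}\calW=-e^{i\theta_j}(\Lambda W)_{\lam_j}$. Since $\calZ_1,\calZ_2,W,\Lambda W$ are real, the mixed pairings $\langle i\calZ_1\mid \Lambda W\rangle$ and $\langle \calZ_2\mid iW\rangle$ vanish identically. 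The block is therefore exactly diagonal, with entries $\langle\calZ_1\mid W\rangle$ and $\langle\calZ_2\mid\Lambda W\rangle$, up to signs and a power of $\lam_j$. The quantities $\langle\calZ_1\mid\Lambda W\rangle$ and $\langle\calZ_2\mid W\rangle$ that \eqref{aux} controls do not enter at all. (The paper's own system in Lemma~\ref{lem:basic-mod} has these same diagonal entries while citing \eqref{aux}, so the mismatch comes from the setup.)

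The first entry is harmless: $\calZ_1\ge 0$, $\calZ_1\not\equiv 0$ and $W>0$ give $\langle\calZ_1\mid W\rangle>0$. The second entry is not implied by \eqref{aux}. $\Lambda W$ changes sign at $r=\sqrt{D(D-2)}$, so a nonnegative $\calZ_2$ with mass on both sides of that radius can have $\langle\calZ_2\mid W\rangle>0$ and $\langle\calZ_2\mid\Lambda W\rangle=0$. Already for $M=1$ and real $v$ near $W$, this breaks the lemma:
\begin{itemize}
\item $\Phi^{(1)}=-\sin\theta\,\langle\calZ_{1,\lam}\mid v\rangle$ (up to normalization), so $\Phi^{(1)}=0$ forces $\theta=0$.
\item At $v=W$, the map $\lam\mapsto\Phi^{(2)}(0,\lam)$ then has a degenerate zero at $\lam=1$, so the implicit function theorem is unavailable.
\item Generically, suitable small perturbations of $v$ have no zero, or two zeros, near $\lam=1$. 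Existence and uniqueness together with \eqref{eq:g-bound-0} both fail.
\end{itemize}

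So the argument needs an extra hypothesis, in one of two forms:
\begin{itemize}
\item require $\langle\calZ_2\mid\Lambda W\rangle\neq 0$, for example $\operatorname{supp}\calZ_2\subset(0,\sqrt{D(D-2)})$; or
\item swap the roles in \eqref{eq:v-decomp} to $\langle ie^{i\theta_j}\calZ_{2,\lam_j}\mid g\rangle=\langle e^{i\theta_j}\calZ_{1,\lam_j}\mid g\rangle=0$.
\end{itemize}
The swapped version is what \eqref{aux} is designed for, and it is what your ``intended design'' tacitly assumes. Your suggested remedy of adjusting $\calZ_1$ aims at the wrong function.

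Once this is settled, the rest of your argument goes through. For global uniqueness, note that Lemma~\ref{lem:bub-config} gives $o(1)$-closeness of the parameters as $\eta\to 0$, not $O(\eta)$. That is still enough to land in the uniqueness neighborhood of the contraction.
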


\begin{proof}

The proof is a standard argument and is very similar to \cite{JL25}. 

\end{proof}

Furthermore, similar to Lemma 2.20 in \cite{JL23a}, if a function $w$ is close to two different multi-bubble configurations then the scales of those two configurations and the phases of those two configurations are also the same up to a small constant. 
\begin{lem}\label{lem:bub-config} There exists $\eta>0$ sufficiently small with the following property. Let  $M, L \in \N$,  $\vec\theta \in \bbT^M, \vec \sigma \in \bbT^L$, $\vec \lam \in (0, \infty)^M, \vec \mu \in (0, \infty)^L$,   and $ w\in \calE $ satisfying, 
 \begin{align} 
 \| w  -  \calW(  \vec \theta, \vec \lam)\|_{\E}^2  + \sum_{j =1}^{M-1} \Big(\frac{\lam_j}{\lam_{j+1}} \Big)^{\frac{D-2}{2}} &\le \eta,  \label{eq:M-bub} \\
 \| w  -  \calW( \vec \sigma , \vec \mu)\|_{\E}^2 +  \sum_{j =1}^{L-1} \Big(\frac{\mu_j}{\mu_{j+1}} \Big)^{\frac{D-2}{2}} &\le \eta. \label{eq:L-bub} 
 \end{align} 
 Then, $M = L$. Moreover, for every $\delta>0$ the number $\eta>0$ above can be chosen small enough so that 
\begin{align}
\label{eq:lam-mu-close} 
\max_{j = 1, \dots M} \left| \frac{\lam_j}{\mu_j} - 1 \right| + \max_{j = 1, \dots M} |\theta_j-\sigma_j|  \leq  \delta.
\end{align} 
\end{lem}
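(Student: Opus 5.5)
The plan is to argue by contradiction and compactness, using the profile decomposition associated with the scales. Suppose there are sequences $\eta_n \to 0$ and functions $w_n$ satisfying \eqref{eq:M-bub} and \eqref{eq:L-bub} with $\eta = \eta_n$, and with parameters $\vec\theta_n,\vec\lam_n$ and $\vec\sigma_n,\vec\mu_n$. We aim to show that $M=L$, and that the conclusion \eqref{eq:lam-mu-close} holds for $n$ large.

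\textbf{Step 1: reduce to a single comparison.} By the triangle inequality,
\[
\|\calW(\vec\theta_n,\vec\lam_n)-\calW(\vec\sigma_n,\vec\mu_n)\|_{\E}\le 2\sqrt{\eta_n}\to 0,
\]
while both scale families are asymptotically separated: $\lam_{j}/\lam_{j+1}\to 0$ and $\mu_j/\mu_{j+1}\to 0$. So the problem reduces to showing that two well-separated multi-bubbles which are close in $\E$ must have the same number of bubbles and matching scales and phases.

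\textbf{Step 2: match the top bubbles.} Consider the largest scale $\lam_{M,n}$. Rescale by $\lam_{M,n}$, that is, apply $v\mapsto v_{1/\lam_{M,n}}$. The left configuration then converges weakly in $\E$ to $e^{i\theta_M}W$ along a subsequence, because the rescaled lower bubbles $W_{\lam_j/\lam_M}$ concentrate at the origin and converge weakly to $0$.

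On the right, each rescaled bubble $e^{i\sigma_k}W_{\mu_k/\lam_M}$ converges weakly to $0$ unless $\mu_k/\lam_M$ stays bounded away from $0$ and $\infty$. By the separation of the $\mu_k$, at most one index $k$ can have this property. Since the weak limits must agree and $W\neq 0$, there is exactly one such $k$. Moreover $\mu_k/\lam_M\to c$ and $e^{i\sigma_k}W_c=e^{i\theta_M}W$, which forces $c=1$ and the phases to agree modulo $2\pi$. We then argue that $k=L$: if $k<L$, the bubble $W_{\mu_L}$ has $\mu_L\gg\lam_M$. We pair it against the left side using the fact that rescaling by $\mu_L$ sends all left bubbles weakly to $0$, which contradicts the nonzero weak limit $e^{i\sigma_L}W$.

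\textbf{Step 3: induct downward.} Subtract the matched top bubbles. The difference $e^{i\theta_M}W_{\lam_M}-e^{i\sigma_L}W_{\mu_L}\to 0$ in $\E$, because the scale ratio tends to $1$, the phases are close, and scaling and phase rotation act continuously on $\E$. We then obtain two configurations with $M-1$ and $L-1$ bubbles that are still close, and repeat the argument. If one family is exhausted first, say $M<L$, then a single remaining bubble would have $\E$-norm tending to $0$, which is impossible since $\|W\|_{\E}>0$. Hence $M=L$, and the ratios and phases converge, which contradicts the failure of \eqref{eq:lam-mu-close} for a fixed $\delta$.

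\textbf{Main obstacle.} The delicate step is the weak-limit identification in Step 2. We must check that the weak limits of superpositions separate correctly, using the orthogonality $\la W_a\mid W_b\ra\to0$ when $a/b\to 0$ or $a/b\to\infty$. We must also rule out that the phase closeness is only modulo a sign ambiguity. This is handled by the identity $e^{i\sigma}W_c=e^{i\theta}W$, which forces $c=1$ and $\sigma=\theta$ in $\bbT$, since $W>0$ is real.
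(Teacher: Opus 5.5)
The paper gives no proof of this lemma beyond the pointer to Lemma 2.20 of \cite{JL23a}, so your argument has to stand on its own. For a \emph{fixed} pair $(M,L)$ it does. After rescaling by $\lam_M$, the weak limit singles out exactly one $\mu_k$ with $\mu_k/\lam_M\to c\in(0,\infty)$. The identity $e^{i\sigma}W_c=e^{i\theta}W$ forces $c=1$ and equal phases, the index must be $L$, the matched pair cancels strongly in $\E$, and you descend. (When one family is exhausted, several bubbles may remain rather than one; just rerun the weak-limit step at the largest remaining scale.)

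The gap is in the quantifiers. As stated, $\eta$ is chosen before $M$ and $L$, so negating the lemma gives $\eta_n\to0$ together with $M_n,L_n$ that may depend on $n$. If $M_n+L_n$ stays bounded, you can pass to a subsequence on which they are constant and your proof applies. If $M_n+L_n\to\infty$, it breaks in two places:
\begin{itemize}
\item In Step 2, a sum of unboundedly many rescaled bubbles whose scale ratios tend to $0$ or $\infty$ need not be weakly null just because each term is.
\item Step 3 would iterate $\min(M_n,L_n)\to\infty$ times, accumulating $o(1)$ errors without control.
\end{itemize}
The norm does not rescue this either: it only yields $|\sqrt M-\sqrt L|\,\|W\|_{\E}\lesssim\sqrt\eta$, which allows $M\neq L$ once $M+L\gtrsim\eta^{-1}$.

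So what you have proved is the lemma with $\eta=\eta(M,L,\delta)$, equivalently with $\eta$ depending on a bound $M,L\le N_0$. That is all the paper ever needs, since its configurations have at most $N$ bubbles.

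To get the uniform version, localize scale by scale. Use $\|W_\mu\|_{\E(0,R)}\lesssim(R/\mu)^{\frac{D-2}{2}}$, $\|W_\mu\|_{\E(R,\infty)}\lesssim(\mu/R)^{\frac{D-2}{2}}$, and the fact that consecutive scale ratios are at most $\eta^{\frac{2}{D-2}}$. Then on the annulus $\{\lam_j/2\le r\le 2\lam_j\}$:
\begin{itemize}
\item the other left bubbles contribute $\lesssim\eta$ in $\E$;
\item the right bubbles with scale outside $[\lam_j/A,A\lam_j]$ contribute $\lesssim A^{-\frac{D-2}{2}}$.
\end{itemize}
Both bounds come from geometric summation and are independent of $M,L$. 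Compare with $\|W\|_{\E(\frac12,2)}>0$ and $\|\calW(\vec\theta,\vec\lam)-\calW(\vec\sigma,\vec\mu)\|_{\E}\le2\sqrt\eta$. This shows that every $\lam_j$ has some $\mu_k$ within a factor $A$, and vice versa. Separation makes this matching an order-preserving bijection, so $M=L$. A compactness argument after rescaling by $\lam_j$, run on one annulus at a time, then gives the closeness of ratios and phases with constants independent of $M,L$.
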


\subsection{Localized Energy Inequalities and Energy Trapping}

In this subsection we record here some localized energy inequalities.

\begin{lem}
\label{lem:localized-energy}    

    Let \( I \subset [0, \infty) \) be a time interval, and let \( \phi : I \times (0, \infty) \to [0, \infty) \) be a smooth function. Let \( u(t) \in \mathcal{E} \) be a solution to $\eqref{eqn:GL}$ on \( I \). Then, for any \( t_1, t_2 \in I \) with \( t_1 < t_2 \) we have
\[
\begin{aligned}
\label{eqn:localized-energy}
& \int_0^\infty \tilde{\mathbf{e}}(u(t_2)) \phi^2 r^{D-1} \mathrm{d}r - \int_0^\infty \tilde{\mathbf{e}}(u(t_1)) \phi^2 r^{D-1} \mathrm{d}r   \\
& = -2 \Re z \cdot \int_{t_1}^{t_2} \int_0^\infty |\partial_t u|^2 \phi^2 r^{D-1} \mathrm{d}r\mathrm{d}t + 2 \Re \int_{t_1}^{t_2} \int_0^\infty |u|^{p-1} u (\partial_t\bar{u}) \phi^2 r^{D-1} \mathrm{d}r\mathrm{d}t \\
& \quad - 4 \Re \int_{t_1}^{t_2} \int_0^\infty (\partial_r u)(\partial_t \bar{u}) \phi \partial_r \phi \, r^{D-1} \mathrm{d}r\mathrm{d}t + 2 \Re \int_{t_1}^{t_2} \int_0^\infty \frac{u \partial_t \bar{u}}{r^2} \phi^2 r^{D-1} \mathrm{d}r\mathrm{d}t \\
& \quad + 2 \int_{t_1}^{t_2} \int_0^\infty \tilde{\mathbf{e}}(u(t)) \phi \partial_t \phi \, r^{D-1} \mathrm{d}r\mathrm{d}t.
\end{aligned}
\]

\end{lem}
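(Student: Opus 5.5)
The identity is a weighted energy computation: differentiate the weighted modified energy in time, integrate the gradient term by parts in $r$, and use the equation \eqref{eqn:GL} (together with $|z|=1$) to replace $\Delta u$ by $\bar z\,\partial_t u - f(u)$. I would first argue formally for smooth, well-localized solutions and then justify the computation by approximation.

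\emph{Formal computation.} Set
\[
\Phi(t):=\int_0^\infty \tilde{\mathbf e}(u(t))\,\phi^2\, r^{D-1}\,\mathrm dr .
\]
Differentiating under the integral gives
\[
\Phi'(t)=2\Re\int \partial_r u\,\partial_t\partial_r\bar u\,\phi^2 r^{D-1}\,\mathrm dr+2\Re\int \frac{u\,\partial_t\bar u}{r^2}\phi^2r^{D-1}\,\mathrm dr+2\int\tilde{\mathbf e}(u)\,\phi\,\partial_t\phi\, r^{D-1}\,\mathrm dr .
\]
The last two terms already appear on the right-hand side of the identity. In the first term I integrate by parts in $r$. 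Since $\partial_r(r^{D-1}\partial_r u)=r^{D-1}\Delta u$, this produces
\[
-2\Re\int \Delta u\,\partial_t\bar u\,\phi^2 r^{D-1}\,\mathrm dr-4\Re\int\partial_r u\,\partial_t\bar u\,\phi\,\partial_r\phi\, r^{D-1}\,\mathrm dr .
\]
Because $|z|=1$, equation \eqref{eqn:GL} gives $\Delta u=\bar z\,\partial_t u-f(u)$. Substituting, and using $\Re(\bar z\,|\partial_t u|^2)=\Re z\,|\partial_t u|^2$, the first integral splits into two pieces. One is $-2\Re z\int|\partial_t u|^2\phi^2r^{D-1}\,\mathrm dr$. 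The other is $2\Re\int |u|^{p-1}u\,\partial_t\bar u\,\phi^2r^{D-1}\,\mathrm dr$. Integrating $\Phi'$ from $t_1$ to $t_2$ then yields exactly the stated identity.

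\emph{Justification.} Two points need care.

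First, the boundary terms in the integration by parts must vanish. At $r=0$ the boundary term is $r^{D-1}\partial_r u\,\partial_t\bar u\,\phi^2$. For regular radial solutions it vanishes, since $D\ge3$ and $\partial_r u(0)=0$. At $r=\infty$, I would insert a spatial cutoff $\chi_R$, i.e.\ replace $\phi$ by $\phi\chi_R$, derive the identity with compactly supported weight, and let $R\to\infty$. The extra terms involve $\partial_r\chi_R$ and are bounded by
\[
R^{-1}\int_{R\le r\le 2R}|\partial_r u|\,|\partial_t u|\,\phi^2\, r^{D-1}\,\mathrm dr .
\]
These vanish in the limit by Cauchy--Schwarz, using $u\in\mathcal E$ and the bound $\partial_t u\in L^2_{t,x}$ from Proposition~\ref{prop:cauchy}(5). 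This assumes $\phi$ is bounded with bounded derivatives on the relevant region, which is implicit in the statement.

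Second, the solution must be regular enough for the time derivative and the equation to be used pointwise. By parabolic smoothing ($\Re z>0$), $u$ is smooth for $t>0$. When $t_1=0$, I would approximate $u_0$ in $\dot H^1$ by smooth compactly supported radial data. Then I pass to the limit using the continuous dependence estimate \eqref{eq:continuous-dependence} for the energy terms and the energy identity \eqref{eq:tension-L2} for the $L^2_{t,x}$ convergence of $\partial_t u$. The nonlinear term converges by Sobolev embedding, since $|u|^{p-1}u\in L^{2D/(D+2)}$ uniformly.

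The main obstacle is this regularity and limiting step, in particular the convergence of the $\partial_t u$ terms near $t=0$. The algebra itself is routine.
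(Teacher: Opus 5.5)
Your formal computation is exactly the paper's proof. You differentiate $\int_0^\infty\tilde{\mathbf e}(u)\phi^2r^{D-1}\,\mathrm{d}r$ in time, integrate the gradient term by parts in $r$, and substitute $\Delta u=\bar z\,\partial_t u-f(u)$. The paper gives nothing beyond this, so your justification layer is extra.

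In that layer, though, the reason you give for the nonlinear term fails. A uniform bound $f(u)\in L^{2D/(D+2)}_x$ together with $\partial_t u\in L^2_{t,x}$ does not by itself make $f(u)\,\partial_t\bar u$ integrable, since $\frac{D+2}{2D}+\frac12>1$.

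What you need is the spacetime bound $u\in L^{2(D+2)/(D-2)}_{t,x}$ from the definition of a strong solution, which gives $f(u)\in L^2_{t,x}$. For your approximation scheme you also need the approximate solutions to converge in that norm, which comes from the stability theory. Alternatively, $\Re\big(f(u)\,\partial_t\bar u\big)=\partial_t F(u)$ lets you pass to the limit in this term using only $L^\infty_tL^{2D/(D-2)}_x$ convergence.

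The term with $u\,\partial_t\bar u/r^2$ has a similar problem: it is not controlled near $r=0$ by Hardy's inequality. You need either that $\phi$ vanishes there, as it does in every application in the paper, or the rewriting $2\Re(u\,\partial_t\bar u)=\partial_t|u|^2$.

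With these fixes it is simpler to prove the identity for $t_1>0$, where the solution is regular, and then let $t_1\downarrow 0$, rather than approximating the data.
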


\begin{proof}

    Integrating by parts, we have

    \begin{align}   
    & \int_0^\infty \tilde{\mathbf{e}}(u(t_2)) \phi^2 r^{D-1} \mathrm{d}r - \int_0^\infty \tilde{\mathbf{e}}(u(t_1)) \phi^2 r^{D-1} \mathrm{d}r   = \int_{t_1}^{t_2}  \p_t(\tilde{\mathbf{e}}(u(t)) \phi^2) r^{D-1} \mathrm{d}r \mathrm{d}t \\ 
    & =  \int_{t_1}^{t_2}  ( \p_ru \p_t\p_r \bar{u} +\p_r\bar{u} \p_t\p_r u +\frac{u\p_t\bar{u}+\bar{u}\p_tu }{r^2} )  \phi^2 r^{D-1} \mathrm{d}r \mathrm{d}t+ 2\int_{t_1}^{t_2}  \tilde{\mathbf{e}}(u(t)) \phi \p_t \phi r^{D-1} \mathrm{d}r \mathrm{d}t \\
    & =-2 \Re z \cdot \int_{t_1}^{t_2} \int_0^\infty |\partial_t u|^2 \phi^2 r^{D-1} \mathrm{d}r\mathrm{d}t + 2 \Re \int_{t_1}^{t_2} \int_0^\infty |u|^{p-1} u (\partial_t\bar{u}) \phi^2 r^{D-1} \mathrm{d}r\mathrm{d}t \\
    & \quad - 4 \Re \int_{t_1}^{t_2} \int_0^\infty (\partial_r u)(\partial_t \bar{u}) \phi \partial_r \phi \, r^{D-1} \mathrm{d}r\mathrm{d}t + 2 \Re \int_{t_1}^{t_2} \int_0^\infty \frac{u \partial_t \bar{u}}{r^2} \phi^2 r^{D-1} \mathrm{d}r\mathrm{d}t \\
    &+2\int_{t_1}^{t_2}  \tilde{\mathbf{e}}(u(t)) \phi \p_t \phi r^{D-1} \mathrm{d}r \mathrm{d}t .
    \end{align}

\end{proof}

\begin{rem}
    Through the proof we can see that the monotone formulae used in the proof for nonlinear heat equation also work for Ginzburg-Landau equation, since the main term remains unchanged up to a positive constant multiple $\Re z$ and no new terms are produced.
\end{rem}

Next we record many lemmas that were proved in \cite{Ary25ra}. The proofs work in the complex-valued case.

\begin{lem}[Radial Sobolev embedding]
    
    Let $v\in \calE$, then for $R>0$ we have 
    \begin{equation}
    \label{eqn:r-sobolev}
    |v(R)|\leq \frac{\sqrt{2}}{R^{\frac{D-2}{2}}} \| v\|_{\calE(R)}
    \end{equation}

\end{lem}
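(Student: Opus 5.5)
We prove the bound by the fundamental theorem of calculus applied to a weighted quantity. We may assume first that $v$ is smooth and compactly supported in $(0,\infty)$, or at least that $v$ and $\p_r v$ decay fast enough; the general case $v\in\calE$ then follows by density. Indeed, radial smooth compactly supported functions are dense in $\calE$. Moreover, for $v_k\to v$ in $\calE$ the one-dimensional Sobolev embedding on compact subintervals of $(0,\infty)$ gives local uniform convergence, so pointwise values converge and the inequality passes to the limit.

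For such $v$ and fixed $R>0$, write
\[
|v(R)|^2 = -\int_R^\infty \p_r\big(|v(r)|^2\big)\,\ud r = -2\Re\int_R^\infty \bar v(r)\,\p_r v(r)\,\ud r .
\]
We then insert the weight $r^{D-1}$ by writing $\ud r = r^{-(D-2)}\cdot r^{-1}\cdot r^{D-1}\ud r$. For $r\ge R$ we use $r^{-(D-2)}\le R^{-(D-2)}$, which gives
\[
|v(R)|^2 \le \frac{2}{R^{D-2}}\int_R^\infty |\p_r v|\,\frac{|v|}{r}\, r^{D-1}\ud r .
\]

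Next apply Cauchy--Schwarz, or simply the pointwise inequality $2ab\le a^2+b^2$ with $a=|\p_r v|$ and $b=|v|/r$. The resulting integrand is bounded by $\tilde{\mathbf e}(v)$, so
\[
|v(R)|^2\le \frac{2}{R^{D-2}}\cdot\frac12\int_R^\infty \tilde{\mathbf e}(v)\,r^{D-1}\ud r\cdot 2 .
\]
More carefully, $2\int |\p_r v|\,|v|/r \le \int (|\p_r v|^2+|v|^2/r^2)$, which yields
\[
|v(R)|^2\le \frac{1}{R^{D-2}}\|v\|_{\calE(R)}^2 ,
\]
hence $|v(R)|\le R^{-\frac{D-2}{2}}\|v\|_{\calE(R)}$. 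This is even better than the stated constant $\sqrt2$. If one instead uses Cauchy--Schwarz as $2\|\p_r v\|\,\|v/r\|\le 2\|v\|_{\calE(R)}^2$, one obtains exactly the constant $\sqrt 2$. Either way the claimed bound holds.

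The only delicate point is the boundary term at infinity, namely that $|v(r)|\to0$ along $r\to\infty$ for general $v\in\calE$. This is the reason for reducing to compactly supported approximants. Alternatively, one can note that the finiteness of $\int |v|^2 r^{D-3}\ud r$ forces $\liminf_{r\to\infty}|v(r)|=0$. Integrating from $R$ to a sequence $r_k\to\infty$ along which $v(r_k)\to0$ then avoids the density step altogether. This is the step I expect to require the most care; the rest is routine.
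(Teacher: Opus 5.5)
Your argument is correct. Writing $|v(R)|^2=-2\Re\int_R^\infty \bar v\,\p_r v\,\ud r$, bounding $r^{-(D-2)}\le R^{-(D-2)}$ and using $2ab\le a^2+b^2$ gives $|v(R)|\le R^{-\frac{D-2}{2}}\|v\|_{\calE(R)}$, which is stronger than the stated $\sqrt2$ bound, and your observation that $\int_R^\infty |v|^2 r^{D-3}\,\ud r<\infty$ forces $\liminf_{r\to\infty}|v(r)|=0$ (since $D\ge 3$) handles the boundary term without any density step.

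The paper gives no proof of its own and simply cites \cite{Ary25ra}; your fundamental-theorem-of-calculus plus Cauchy--Schwarz computation is the standard argument behind this bound.
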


\begin{lem}[Coercivity of nonlinear energy for small \( \mathcal{E} \)-norm]
\label{lem:basic-trapping}
   There exist constants \( \delta, C > 0 \) with the following properties. Let \( v \in \mathcal{E} \) be such that \( \|v\|_{\mathcal{E}} \leq \delta \). Then
\[
E(v) \geq C\|v\|_{\mathcal{E}}^2.
\]
\end{lem}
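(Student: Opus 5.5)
The plan is to compare the potential term with the kinetic term using the sharp Sobolev inequality, and then absorb it for small norm.

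First, for $v\in\calE$ the radial Hardy inequality in dimension $D\ge 3$,
\[
\int_0^\infty \frac{|v|^2}{r^2}\,r^{D-1}\,\ud r\le \frac{4}{(D-2)^2}\int_0^\infty |\p_r v|^2 r^{D-1}\,\ud r ,
\]
shows that $\|v\|_{\calE}^2$ is comparable to $\|\p_r v\|_{L^2}^2$. Concretely, $\|\p_r v\|_{L^2}^2\ge c_D\|v\|_{\calE}^2$ with $c_D=\bigl(1+\tfrac{4}{(D-2)^2}\bigr)^{-1}$.

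Next, the Sobolev embedding $\dot H^1(\R^D)\hookrightarrow L^{2^*}(\R^D)$ holds with constant $C_S$. Since $|v|$ is radial and $|\p_r|v||\le|\p_r v|$ almost everywhere, it applies equally to complex-valued $v$:
\[
\int_0^\infty |v|^{2^*} r^{D-1}\,\ud r\le C\,\|\p_r v\|_{L^2}^{2^*}\le C\,\|v\|_{\calE}^{2^*}.
\]

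Inserting both bounds into the definition of $E$ gives
\[
E(v)\ge \frac{c_D}{2}\|v\|_{\calE}^2-\frac{C}{2^*}\|v\|_{\calE}^{2^*}
=\|v\|_{\calE}^2\Bigl(\frac{c_D}{2}-\frac{C}{2^*}\|v\|_{\calE}^{\frac{4}{D-2}}\Bigr).
\]
Because $2^*-2=\frac{4}{D-2}>0$, we can choose $\delta>0$ with $\frac{C}{2^*}\delta^{\frac{4}{D-2}}\le\frac{c_D}{4}$. Then $\|v\|_{\calE}\le\delta$ implies $E(v)\ge\frac{c_D}{4}\|v\|_{\calE}^2$, which proves the lemma with this $C$.

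The only point needing care is that $E$ controls only $\p_r v$, while $\|\cdot\|_{\calE}$ also contains the Hardy term. The Hardy inequality handles this. No further obstacle is expected.
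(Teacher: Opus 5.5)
Your argument is correct and is the standard one: Hardy's inequality to control the $|v|^2/r^2$ part of $\|v\|_{\mathcal{E}}^2$ by $\|\partial_r v\|_{L^2}^2$, Sobolev to bound the potential term by $\|v\|_{\mathcal{E}}^{2^*}$, and absorption using $2^*>2$ for small $\delta$. The paper gives no proof of its own here and simply defers to \cite{Ary25ra} with the remark that the real-valued proofs carry over to complex-valued $v$; your observation $|\partial_r|v||\le|\partial_r v|$ is exactly what justifies that remark for this lemma.
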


\begin{lem}[Trapping on Tails]
    \label{lem:localized-coercivity}
    There exist constants \( \delta, C > 0 \) with the following property. Let \( v \in \mathcal{E} \) be such that for any \( R > 0 \), \( \|v\|_{\mathcal{E}(R)} \leq \delta \). Then
\[
E(v; R, \infty) \geq C\|v\|_{\mathcal{E}(R)}^2.
\]
\end{lem}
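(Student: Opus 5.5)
The plan is to reduce the statement to a one-dimensional estimate on the half-line $(R,\infty)$, using the radial Sobolev embedding \eqref{eqn:r-sobolev} to control the nonlinear part of the localized energy by the quadratic part. (I read the hypothesis as applying to the fixed $R$ in the conclusion; the quantifier ``for any $R>0$'' is only meant to make the lemma usable at every radius.)

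First write
\[
E(v;R,\infty)=\frac12\int_R^\infty|\p_r v|^2 r^{D-1}\,\ud r-\frac{D-2}{2D}\int_R^\infty |v|^{\frac{2D}{D-2}}r^{D-1}\,\ud r .
\]
The nonlinear term is bounded by pulling out a sup-norm factor:
\[
\int_R^\infty |v|^{\frac{2D}{D-2}}r^{D-1}\,\ud r\le \sup_{r\ge R}\big(r^{\frac{D-2}{2}}|v(r)|\big)^{\frac{4}{D-2}}\int_R^\infty \frac{|v|^2}{r^2}r^{D-1}\,\ud r .
\]
For every $r\ge R$, \eqref{eqn:r-sobolev} gives $r^{\frac{D-2}{2}}|v(r)|\le \sqrt2\,\|v\|_{\calE(r)}\le\sqrt2\,\|v\|_{\calE(R)}\le\sqrt2\,\delta$. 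Hence the nonlinear term is at most $C\delta^{\frac{4}{D-2}}\|v\|_{\calE(R)}^2$.

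It remains to show that the gradient term controls the full localized norm, i.e. a localized Hardy inequality
\[
\int_R^\infty \frac{|v|^2}{r^2}r^{D-1}\,\ud r\lesssim \int_R^\infty|\p_r v|^2r^{D-1}\,\ud r .
\]
This holds with no boundary term at $r=R$ because the weight decays outward. Integrating by parts,
\[
(D-2)\int_R^\infty |v|^2 r^{D-3}\,\ud r=\Big[|v|^2r^{D-2}\Big]_R^\infty-2\Re\int_R^\infty \bar v\,\p_r v\, r^{D-2}\,\ud r .
\]
The boundary term at $R$ carries a minus sign, so it can be dropped as favorable. The boundary term at infinity vanishes, by the radial Sobolev bound together with $\|v\|_{\calE(r)}\to0$ as $r\to\infty$. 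Cauchy--Schwarz then gives $\int_R^\infty |v|^2r^{D-3}\,\ud r\le \frac{4}{(D-2)^2}\int_R^\infty|\p_r v|^2r^{D-1}\,\ud r$. Consequently $\frac12\int_R^\infty|\p_r v|^2r^{D-1}\,\ud r\ge c_D\|v\|_{\calE(R)}^2$ with $c_D=\frac12\big(1+\frac{4}{(D-2)^2}\big)^{-1}$.

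Combining the two bounds,
\[
E(v;R,\infty)\ge \big(c_D-C\delta^{\frac{4}{D-2}}\big)\|v\|_{\calE(R)}^2,
\]
and choosing $\delta$ small gives the lemma with $C=c_D/2$. The only mildly delicate step is the Hardy inequality on the exterior region. There one has to check that the sign of the boundary term at $R$ is favorable and that the term at infinity vanishes, which may require first approximating $v$ by functions compactly supported in $r$ and then passing to the limit. Everything else is direct.
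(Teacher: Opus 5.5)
Your proof is correct and complete. The paper gives no argument of its own for this lemma: it records it as proved in \cite{Ary25ra} and asserts that the proof carries over to complex-valued functions, so there is no in-paper proof to compare against.

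Your argument supplies a self-contained proof along the natural lines. The radial Sobolev bound \eqref{eqn:r-sobolev} makes the potential term $O(\delta^{4/(D-2)})\|v\|_{\calE(R)}^2$. The exterior Hardy inequality
$$\int_R^\infty |v|^2 r^{D-3}\,\mathrm{d}r\le \frac{4}{(D-2)^2}\int_R^\infty |\p_r v|^2 r^{D-1}\,\mathrm{d}r$$
then lets the gradient term control the full localized norm. In that inequality the boundary term at $R$ has the favorable sign, and the one at infinity vanishes. Your argument also confirms the paper's remark about the complex case, since only $|v|$ and $\Re(\bar v\,\p_r v)$ enter.

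Two small remarks. First, the approximation by compactly supported functions you mention is unnecessary. For any $v\in\calE$ you already have $r^{D-2}|v(r)|^2\le 2\|v\|_{\calE(r)}^2\to 0$, and local absolute continuity on $(0,\infty)$ justifies the integration by parts.

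Second, your reading of the hypothesis is the right one: you only need smallness of $\|v\|_{\calE(R)}$ at the given $R$. Read literally, ``for any $R>0$'' forces $\|v\|_{\calE}\le\delta$. The lemma is applied in the proof of Proposition~\ref{prop:global-seq-dec}, where only the exterior quantity $\|u(T)\|_{\calE(\alpha\sqrt{T})}$ is known to be small, so that literal version would not suffice there. Your proof uses the hypothesis only at the single radius $R$, so it establishes the stronger statement actually needed.
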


In the remainder of this section we record the properties of short time propagation for Ginzburg-Landau equations. The proofs are the same as those in \cite{Ary25ra}.

\begin{lem}[Propagation of small localized \( \mathcal{E} \) norm]
    There exist constants \( \delta, C > 0 \) with the following property. Let \( u(t) \in \mathcal{E} \) be a solution to $\eqref{eqn:GL}$ with initial data \( u(0) = u_0 \in \mathcal{E} \) on the time interval \( I = [0, T_+(u_0)) \). Let \( 0 < r_1 < r_2 < \infty \). Suppose that
\[
\|u_0\|_{\mathcal{E}(r_1/2, 2r_2)} \leq \delta, \quad \sup_{t \in J} \|u(t)\|_{\mathcal{E}} \lesssim 1 
\]
where \( J = I \cap [0, \delta r_1^2] \). Then,
\[
\sup_{t \in J} \|u(t)\|_{\mathcal{E}(r_1, r_2)} \leq C\delta,
\]
\end{lem}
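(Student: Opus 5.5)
We will run a localized energy argument with a time-independent cutoff and close it with a continuity (bootstrap) argument on $J$. Fix $\phi\in C_c^\infty((0,\infty))$, $0\le\phi\le1$, with $\phi\equiv1$ on $[r_1,r_2]$ and $\operatorname{supp}\phi\subset[r_1/2,2r_2]$. We take $\phi$ piecewise adapted to the two ends, so that
\[
|\partial_r\phi|\lesssim r^{-1}\quad\text{on its support},
\]
which gives $|\partial_r\phi|\lesssim r_1^{-1}$ near $r_1$ and $\lesssim r_2^{-1}\le r_1^{-1}$ near $r_2$. We apply Lemma~\ref{lem:localized-energy} with $\partial_t\phi=0$.

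In the identity, the term $-2\Re z\int\!\!\int|\partial_tu|^2\phi^2$ is nonpositive, and we use it to absorb $\partial_t u$. The three remaining terms are each bounded by Cauchy–Schwarz, putting $\varepsilon|\partial_tu|^2\phi^2$ on one side.

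\textbf{Cross term.} We have
\[
\Big|\int\!\!\int \partial_r u\,\partial_t\bar u\,\phi\,\partial_r\phi\Big|\le \varepsilon\int\!\!\int|\partial_tu|^2\phi^2+C_\varepsilon\int\!\!\int|\partial_ru|^2|\partial_r\phi|^2 .
\]
The last integral is at most $C r_1^{-2}\,|J|\sup_{t\in J}\|u(t)\|_{\mathcal E}^2\lesssim \delta$, since $|J|\le\delta r_1^2$ and $\sup_J\|u\|_{\mathcal E}\lesssim1$. This is the "wave-like" propagation mechanism: the flux through the cutoff is controlled only because the time interval has length $\delta r_1^2$.

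\textbf{Potential and nonlinear terms.} We bound
\[
\int\!\!\int \frac{|u|^2}{r^2}\,\phi^2\le \varepsilon\int\!\!\int|\partial_t u|^2\phi^2+C\,|J|\,\sup_J\int \frac{|u|^2}{r^4}\phi^2 .
\]
On the support of $\phi$ we have $r^{-4}\le 4r_1^{-2}r^{-2}$, so the last term is again $\lesssim\delta$. For the nonlinear term we use the radial Sobolev embedding \eqref{eqn:r-sobolev}: for $r\ge r_1/2$,
\[
|u|^{p-1}\lesssim r^{-2}\|u\|_{\mathcal E(r_1/2)}^{p-1}\lesssim r^{-2}.
\]
This reduces the nonlinear term to the same shape as the potential term, and the same bound gives $\lesssim\delta$.

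\textbf{Closing.} Choosing $\varepsilon$ small compared with $\Re z$ absorbs all the $\partial_t u$ contributions, and we get
\[
\sup_{t\in J}\|u(t)\|_{\mathcal E(r_1,r_2)}^2\le \|u_0\|^2_{\mathcal E(r_1/2,2r_2)}+C\delta\le \delta^2+C\delta .
\]

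The main obstacle is that this yields only $\sqrt{\delta}$, not $C\delta$. To fix the power, we redo the estimate without absorbing the bounded global energy. The cross term is instead bounded by
\[
\Big(\int\!\!\int|\partial_tu|^2\Big)^{1/2}\Big(|J|\,r_1^{-2}\sup_J\|u\|^2_{\mathcal E(r_1/2,2r_2)}\Big)^{1/2}.
\]
We then bootstrap on the enlarged annulus $(r_1/2,2r_2)$, starting from the nested cutoff region so that the constants close. The first factor is small via the energy dissipation identity \eqref{eq:tension-L2}.

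If the dissipation $\int_J\|\partial_t u\|_{L^2}^2$ is not small, we instead use the inverse Hölder bound $\lesssim\delta\sup_J\|u\|^2_{\mathcal E(\text{annulus})}$ together with smallness of $u$ on the annulus, which comes from the bootstrap hypothesis. A continuity argument in $t$ then gives $\sup_J\|u\|_{\mathcal E(r_1,r_2)}\le C\delta$ as stated. The bookkeeping of nested annuli in this bootstrap is the delicate part, and we would follow the corresponding argument of \cite{Ary25ra} verbatim, replacing the heat dissipation by $\Re z$ times it.
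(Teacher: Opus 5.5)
Your first pass is correct and is the natural argument; the paper gives no proof of its own and simply defers to \cite{Ary25ra}. The ingredients are right: Lemma~\ref{lem:localized-energy} with a static cutoff, absorption of the three $\partial_t u$ terms into $-2\Re z\iint|\partial_t u|^2\phi^2$, the bound $|u|^{2p}\lesssim r^{-4}|u|^2$ from \eqref{eqn:r-sobolev} together with $\sup_J\|u\|_{\mathcal{E}}\lesssim 1$, and $|J|\le\delta r_1^2$. (In your potential-term display the left side should be $\iint r^{-2}|u|\,|\partial_t u|\,\phi^2$.) You are also right that this pass only gives $\sup_J\|u\|^2_{\mathcal{E}(r_1,r_2)}\le\delta^2+C\delta$, i.e.\ a bound $C\sqrt{\delta}$.

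The upgrade to $C\delta$ is where there is a genuine gap, and neither mechanism you propose works. First, nothing in the hypotheses makes $\int_J\|\partial_t u\|_{L^2}^2$ small: \eqref{eq:tension-L2} only gives $O(1)$. So the H\"older form of the cross term is no better than what you already have. Second, a continuity argument on a fixed annulus cannot close. Every error term is $\lesssim\delta\sup_J\|u\|^2_{\mathcal{E}(\operatorname{supp}\phi)}$, while the left side only controls the norm where $\phi\equiv1$, so each bootstrap step would need a strictly larger annulus.

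The missing observation is that this regress stops after one step, because your own first pass, run with a slightly larger cutoff, supplies the needed smallness. Take $\phi_1\equiv1$ on $[\tfrac34 r_1,\tfrac32 r_2]$ with $\operatorname{supp}\phi_1\subset[\tfrac12 r_1,2r_2]$. Your computation then gives
\[
\sup_{t\in J}\|u(t)\|^2_{\mathcal{E}(\frac34 r_1,\frac32 r_2)}\le\delta^2+C\delta .
\]
Now take $\phi_2\equiv1$ on $[r_1,r_2]$ with $\operatorname{supp}\phi_2\subset[\tfrac34 r_1,\tfrac32 r_2]$ and $|\partial_r\phi_2|\lesssim r_1^{-1}$. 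The cross, potential and nonlinear errors are each at most $C|J|r_1^{-2}\sup_J\|u\|^2_{\mathcal{E}(\frac34 r_1,\frac32 r_2)}$; for the nonlinear term, use the global bound only in the factor $|u|^{2(p-1)}\lesssim r^{-4}$. Hence
\[
\sup_{t\in J}\|u(t)\|^2_{\mathcal{E}(r_1,r_2)}\le\delta^2+C\delta\,(\delta^2+C\delta)\lesssim\delta^2 ,
\]
which is the claimed bound $C\delta$. No smallness of the dissipation and no continuity argument are needed. The later short-time lemmas only use this result qualitatively, so $C\sqrt{\delta}$ would serve there, but the statement as written requires the second pass.
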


\begin{lem}[Short time evolution close to \( W \)]

    Let  $\theta\in \bbT$. There exist \( \delta_0 > 0 \) and a function \( \varepsilon_0 : [0, \delta_0] \to [0, \infty) \) with \( \varepsilon_0(\delta) \to 0 \) as \( \delta \to 0 \) with the following properties. Let \( v_0 \in \mathcal{E} \) and let \( v(t) \) denote the unique solution to $\eqref{eqn:GL}$ with \( v_0(0) = v_0 \). Let \( \mu_0, T_0 > 0 \) and suppose that
\[
\|v_0 - e^{i\theta}W_{\mu_0}\|_{\mathcal{E}} + \frac{T_0^2}{\mu_0} = \delta \leq \delta_0.
\]
Then, \( T_0 < T_+(v_0) \) and
\[
\sup_{t \in [0,T_0]} \|v(t) - e^{i\theta} W_{\mu_0}\|_{\mathcal{E}} < \varepsilon_0(\delta).
\]
\end{lem}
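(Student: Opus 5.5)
The plan is a perturbative argument around the stationary solution $e^{i\theta}W_{\mu_0}$. This function is an exact global solution of \eqref{eqn:GL}: $W_{\mu_0}$ solves \eqref{eqn:stationary}, and multiplication by a constant phase commutes with the nonlinearity $f$. Set $\delta_1:=\|v_0-e^{i\theta}W_{\mu_0}\|_{\calE}\le\delta$ and $T_0^2/\mu_0\le\delta$. The argument has two parts: first show that $v$ exists on $[0,T_0]$, then bound $v(t)-e^{i\theta}W_{\mu_0}$ in $\calE$ on that interval.

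\textbf{Step 1: existence on $[0,T_0]$.} I would use a stability (long-time perturbation) argument in the space $S=L^{\frac{2(D+2)}{D-2}}_{t,x}$, in the same way the local theory of Proposition~\ref{prop:cauchy} (from \cite{CCZ23}) is proved. The approximate solution is $\tilde v(t)=e^{i\theta}W_{\mu_0}$, which solves the equation exactly, and the initial error is $\|v_0-\tilde v(0)\|_{\dot H^1}\le\delta_1$.

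\begin{itemize}
\item The $S$-norm of $\tilde v$ on $[0,T_0]$ is $\|W\|_{L^{\frac{2(D+2)}{D-2}}_x}$ times a power of the elapsed time measured against the scale $\mu_0$. By the hypothesis, this is controlled by a power of $T_0^2/\mu_0\le\delta$.
\item By the stability lemma for the parabolic propagator $e^{tz\Delta}$, which has the same Strichartz/smoothing estimates as the heat semigroup because $\Re z>0$, the solution $v$ has finite $S$-norm on $[0,T_0]$.
\item The blow-up criterion, item (2) of Proposition~\ref{prop:cauchy}, then gives $T_0<T_+(v_0)$.
\end{itemize}

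\textbf{Step 2: energy closeness.} With $T=\min\{T_+(v_0),\infty\}>T_0$, I would apply the continuous dependence estimate \eqref{eq:continuous-dependence} to the pair $u=v$, $\tilde v=e^{i\theta}W_{\mu_0}$. Its constant $C$ depends only on $\|v_0\|_{\dot H^1}$ and $\|W\|_{\dot H^1}$. Since $\|v_0\|_{\dot H^1}\le\|W\|_{\dot H^1}+\delta_0$, this constant is uniform over all admissible data. The estimate yields
\[
\sup_{t\in[0,T_0]}\|v(t)-e^{i\theta}W_{\mu_0}\|_{\dot H^1}\le C\delta_1 .
\]
In the radial setting, Hardy's inequality gives $\|\cdot\|_{\calE}\simeq\|\cdot\|_{\dot H^1}$. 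So I would define $\varepsilon_0(\delta):=2C\delta$, which tends to $0$ as $\delta\to0$, and obtain the strict inequality $<\varepsilon_0(\delta)$. If one prefers not to rely on the global-in-time form of \eqref{eq:continuous-dependence}, there is an alternative. Write $w=v-e^{i\theta}W_{\mu_0}$, which satisfies $\partial_t w=z\Delta w+z\big(f(e^{i\theta}W_{\mu_0}+w)-f(e^{i\theta}W_{\mu_0})\big)$. Then run a Duhamel/Gronwall bootstrap for $\|w\|_{L^\infty_t\dot H^1\cap S}$ on $[0,T_0]$, using that the linear potential term is $O(\|W_{\mu_0}\|_{S([0,T_0])})$ and hence small by Step~1.

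\textbf{Main obstacle.} The delicate point is Step~1, namely converting the smallness hypothesis $T_0^2/\mu_0\le\delta$ into smallness of $\|W_{\mu_0}\|_{S([0,T_0])}$. This matters because the linearized operator $\calL$ has the unstable eigenvalue coming from $-\ka^2$ and the pair \eqref{eq:Y1Y2}, so closeness to $W_{\mu_0}$ cannot be propagated for long times. Everything rests on $T_0$ being short compared with the natural time scale of the bubble.

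Here I would first try to track the dependence on $T_0$ and $\mu_0$ exactly as the hypothesis is stated. I would aim to show that the Duhamel integral over $[0,T_0]$ of the potential term $f'(W_{\mu_0})w$ is bounded by $\|w\|$ times a quantity that vanishes as $T_0^2/\mu_0\to0$. This is the place where I expect the most care to be needed: the hypothesis is not scale invariant, while every other ingredient (the equation, $\calE$, and \eqref{eq:continuous-dependence}) is.

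If the direct route does not close, the fallback is Step~2 alone. The continuous-dependence bound \eqref{eq:continuous-dependence}, as recorded in Proposition~\ref{prop:cauchy}, already holds on the whole common lifespan. In that case the smallness of $T_0^2/\mu_0$ is only needed to ensure $T_0<T_+(v_0)$, and the closeness estimate follows immediately.
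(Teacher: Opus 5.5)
There is a genuine gap, and it is exactly where you suspected. Your Step 1 claim that $\|W_{\mu_0}\|_{S([0,T_0])}$ is controlled by a power of $T_0^2/\mu_0$ is false. Scaling gives
\[
\|W_{\mu_0}\|_{L^{\frac{2(D+2)}{D-2}}_{t,x}([0,T_0]\times\mathbb{R}^D)}=\Big(\frac{T_0}{\mu_0^{2}}\Big)^{\frac{D-2}{2(D+2)}}\|W\|_{L^{\frac{2(D+2)}{D-2}}},
\]
and $T_0^2/\mu_0\le\delta$ gives no control of $T_0/\mu_0^2$: take $T_0=\frac12\sqrt{\delta\mu_0}$ and let $\mu_0\to0$.

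In fact no argument can close under the literal hypothesis. For small $\eta$, the solution from $e^{i\theta}(W+\eta\mathcal{Y})$ has energy strictly below $E(W)$, and $E$ is non-increasing. So it cannot remain in a small fixed $\mathcal{E}$-neighbourhood of $e^{i\theta}W$ for all times: a time sequence with $\|\partial_t u\|_{L^2}\to0$ would, via Lemma~\ref{lem:compactness}, force convergence to some $e^{i\phi}W_\lambda$, which has energy $E(W)$. It therefore leaves that neighbourhood at some finite time $T_*$. Its $\mu_0$-rescaling then leaves the neighbourhood of $e^{i\theta}W_{\mu_0}$ by time $\mu_0^2T_*$, which is $<T_0=\frac12\sqrt{\delta\mu_0}$ once $\mu_0$ is small, while $\delta$ stays fixed and small.

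So $T_0^2/\mu_0$ is a typo for the scale-invariant quantity $T_0/\mu_0^2$. This is also how the lemma is actually used: Lemma~\ref{lem:un-seq} is applied in Lemma~\ref{lem:collision-duration} with $(d_n-c_n)^{1/2}\ll r_n$, and the localized version assumes $T_0\le\delta_0 r_1^2$. You should have drawn this conclusion instead of trying to track the stated hypothesis.

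Your fallback does not repair this. Estimate \eqref{eq:continuous-dependence} is a local-in-time statement, as in \cite{BC96}. Suppose it held on the whole common lifespan with $C=C(\|u_0\|_{\dot H^1},\|v_0\|_{\dot H^1})$, as you read it. Applied to $u_0=e^{i\theta}W$ and $v_0=e^{i\theta}(W+\eta\mathcal{Y})$, it would make $W$ Lyapunov stable, contradicting the instability just described, which you yourself invoke. So under the stated hypothesis neither $T_0<T_+(v_0)$ nor the closeness estimate follows.

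With the corrected hypothesis $\|v_0-e^{i\theta}W_{\mu_0}\|_{\mathcal{E}}+T_0/\mu_0^2=\delta$, your plan is essentially the standard argument the paper defers to \cite{Ary25ra}:
\begin{enumerate}
\item By phase and scaling invariance, take $\theta=0$ and $\mu_0=1$, so $T_0\le\delta\le\delta_0\le1$.
\item Then $\|W\|_{S([0,T_0])}\lesssim\delta^{\frac{D-2}{2(D+2)}}$ is small, and likewise the relevant Strichartz norm of $\nabla W$.
\item Your Duhamel bootstrap for $w=v-W$ in $L^\infty_t\dot H^1\cap S$ then closes. Equivalently, use perturbation theory around the exact solution $W$ on the fixed interval $[0,1]$.
\item This yields existence on $[0,T_0]$ and $\sup_{[0,T_0]}\|v(t)-W\|_{\mathcal{E}}\le\varepsilon_0(\delta)$ with $\varepsilon_0(\delta)\to0$.
\end{enumerate}
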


\begin{lem}[Localized short time evolution close to \( W \)]
    Let $\theta\in \bbT$. There exist \( \delta_0 > 0 \) and a function \( \varepsilon_0 : [0, \delta_0] \to [0, \infty) \) with \( \varepsilon_0(\delta) \to 0 \) as \( \delta \to 0 \) with the following properties.

Let \( u_0 \in \mathcal{E}, T_0 < T_+(u_0), T_0 \leq \delta_0 r_1^2 \). Let \( u(t) \) denote the unique solution to $\eqref{eqn:GL}$ with \( u_0(0) = u_0 \) and \( \sup_{t \in [0,T_0]} \|u(t)\|_{\mathcal{E}} \lesssim 1 \). Consider, \( \mu_0 > 0, 0 < r_1 < r_2 < \infty \) and suppose that
\[
\|u_0 - e^{i\theta} W_{\mu_0}\|_{\mathcal{E}(r_1/2, 2r_2)} + \frac{T_0^2}{\mu_0} = \delta \leq \delta_0.
\]

Then,
\[
\|u(t) - e^{i\theta} W_{\mu_0}\|_{\mathcal{E}(r_1, r_2)} < \varepsilon_0(\delta)
\]
for all \( t \in (0, T_0] \).
\end{lem}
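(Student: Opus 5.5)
The plan is to reduce to the global short-time lemma (Short time evolution close to $W$) by gluing, and then to compare the two flows on the annulus with a localized energy estimate.

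\emph{Gluing step.} Fix a smooth radial cutoff $\varphi$ with $\varphi\equiv 1$ on $[3r_1/4, 3r_2/2]$, $\operatorname{supp}\varphi\subset(r_1/2,2r_2)$ and $|\partial_r\varphi|\lesssim r^{-1}$. Set
\[
v_0 := e^{i\theta}W_{\mu_0} + \varphi\,(u_0 - e^{i\theta}W_{\mu_0}).
\]
Since $|\partial_r\varphi|\lesssim 1/r$, the cutoff derivative is controlled by the $|g|^2/r^2$ part of the $\mathcal{E}$ norm. Hence
\[
\|v_0 - e^{i\theta}W_{\mu_0}\|_{\mathcal{E}} \lesssim \|u_0 - e^{i\theta}W_{\mu_0}\|_{\mathcal{E}(r_1/2,2r_2)} \le \delta .
\]
We then apply the global lemma to $v_0$ (after shrinking $\delta_0$ to absorb the constant). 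This gives $T_0<T_+(v_0)$ and
\[
\sup_{[0,T_0]}\|v(t)-e^{i\theta}W_{\mu_0}\|_{\mathcal{E}}<\varepsilon_0(C\delta),
\]
and in particular $\sup_{[0,T_0]}\|v(t)\|_{\mathcal{E}}\lesssim 1$. It therefore suffices to show that $w:=u-v$ satisfies $\sup_{(0,T_0]}\|w(t)\|_{\mathcal{E}(r_1,r_2)}\to0$ as $\delta\to0$. Note that $w(0)=0$ on $[3r_1/4,3r_2/2]$.

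\emph{Localized energy estimate for $w$.} The difference solves
\[
\partial_t w = z\Delta w + z\bigl(f(u)-f(v)\bigr).
\]
Choose a second cutoff $\psi$, equal to $1$ on $[r_1,r_2]$ and supported in $(3r_1/4,3r_2/2)$, with $|\partial_r\psi|\lesssim r_1^{-1}$ near the inner edge and $\lesssim r_2^{-1}$ near the outer edge. Compute $\frac{d}{dt}\int\tilde{\mathbf e}(w)\psi^2 r^{D-1}\,dr$ exactly as in Lemma \ref{lem:localized-energy}. This produces a good dissipative term $-2\Re z\int|\partial_t w|^2\psi^2$, plus error terms of three types.
\begin{enumerate}[(a)]
\item Cross terms $\int\partial_r w\,\partial_t\bar w\,\psi\partial_r\psi$. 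These are absorbed by Cauchy--Schwarz into the dissipation, leaving $\int|\partial_r w|^2|\partial_r\psi|^2\lesssim r_1^{-2}\|w\|_{\mathcal{E}}^2\lesssim r_1^{-2}$.
\item Terms with $w\,\partial_t\bar w/r^2$. These are handled the same way, since $r\gtrsim r_1$ on the support.
\item Nonlinear terms $(f(u)-f(v))\partial_t\bar w$. By the radial Sobolev embedding \eqref{eqn:r-sobolev}, on the support $|u|+|v|\lesssim r^{-(D-2)/2}$, so $|f(u)-f(v)|\lesssim r^{-2}|w|\lesssim r_1^{-2}|w|$. After absorbing the $\partial_t w$ factor into the dissipation, this leaves a term of size $r_1^{-2}\|w\|^2_{\mathcal{E}(\mathrm{supp}\,\psi)}$.
\end{enumerate}
A Gronwall argument on $[0,T_0]$, using $T_0\le\delta_0 r_1^2$, then gives
\[
\|w(t)\|_{\mathcal{E}(r_1,r_2)}^2 \lesssim e^{C\delta_0}\Bigl(0+\tfrac{T_0}{r_1^2}\Bigr).
\]

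\emph{Main obstacle.} The hard step is (a). The cutoff error is bounded only by $T_0/r_1^2\le\delta_0$ rather than by a function of $\delta$, because a crude bound on $\|w\|_{\mathcal{E}}$ off the annulus does not see that $w$ starts at zero there. To close, I would first try an iterated-annulus bootstrap. One nests $k$ cutoffs $\psi_1\succ\psi_2\succ\dots$ between $3r_1/4$ and $r_1$ (and between $r_2$ and $3r_2/2$). The error for $\psi_{j+1}$ is then controlled by $\|w\|_{\mathcal{E}}$ on $\operatorname{supp}\psi_j$, which was already shown small, with a gain of $T_0/r_1^2$ at each step. If necessary, one also uses the propagation-of-small-localized-norm lemma to bound the part of $w$ near the edges. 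Alternatively, one can simply let $\varepsilon_0$ depend on $\delta_0$ and shrink $\delta_0$ together with $\delta$, since the hypothesis $T_0^2/\mu_0\le\delta$ ties $T_0$ to $\delta$. Combining the estimate for $w$ with the closeness of $v$ to $e^{i\theta}W_{\mu_0}$ via the triangle inequality gives the claim.
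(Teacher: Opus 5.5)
Your gluing step and the localized energy identity for $w=u-v$ are fine. The dissipative term survives because $\Delta w=\bar z\,\partial_t w-(f(u)-f(v))$ with $\Re\bar z=\Re z>0$. After absorbing $\partial_t w$, the terms in (b) and (c) are bounded by $r_1^{-2}\int\tilde{\mathbf e}(w)\psi^2r^{D-1}\,\mathrm dr$, so Gronwall applies. The gap is exactly the cross term (a) that you flag, and none of your three remedies removes it.
\begin{enumerate}[(1)]
\item The nested-cutoff bootstrap gains a factor $\lesssim k^2T_0/r_1^2\le k^2\delta_0$ per layer, but it starts from the a priori bound $\|w\|_{\mathcal E}\lesssim 1$. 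At best it yields a constant of order $e^{-c/\sqrt{\delta_0}}$, never something that tends to $0$ with $\delta$.
\item The propagation lemma applies to solutions, not to $w$. It also only covers time windows $[0,\delta r_1^2]$ tied to the smallness level, so it presupposes the missing control of $T_0/r_1^2$.
\item The $T_0$--$\mu_0$ hypothesis does not tie $T_0$ to $r_1$. By the scaling $u_\lambda(t,r)=\lambda^{-\frac{D-2}{2}}u(t/\lambda^2,r/\lambda)$, the expression $T_0^2/\mu_0$ must be a misprint for $T_0/\mu_0^2$. When $r_1\ll\mu_0$ one can have $T_0/r_1^2=\delta_0$ while $T_0/\mu_0^2=\delta_0r_1^2/\mu_0^2$ is as small as you like. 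This is the regime in which the lemma is used ($r_n\ll\lambda_K(c_n)$). ``Shrinking $\delta_0$ with $\delta$'' simply replaces the hypothesis $T_0\le\delta_0r_1^2$ by $T_0\le\delta r_1^2$.
\end{enumerate}

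This loss is not an artifact of your method: under the stated hypotheses no function $\varepsilon_0(\delta)$ exists. Take $\theta=0$, $\mu_0=r_2=1$, $r_1=\rho\to0$, $T_0=\delta_0\rho^2$, and $u_0=W+\eta\rho^{-\frac{D-2}{2}}\varphi(r/\rho)$, with fixed $0\neq\varphi\in C_c^\infty((0,\frac12))$ and small fixed $\eta>0$. Then $\|u_0-W\|_{\mathcal E(\rho/2,2)}=0$ and $\delta=\delta_0\rho^2\to0$, under either reading of the misprint. After rescaling by $\rho$, $W$ becomes $W_{1/\rho}$, whose potential satisfies $|W_{1/\rho}|^{\frac4{D-2}}\le\rho^2$ and is negligible on $[0,\delta_0]$. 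Hence $\|u(T_0)-W\|_{\mathcal E(\rho,1)}\to\|U(\delta_0)\|_{\mathcal E(1,\infty)}$, where $U$ solves \eqref{eqn:GL} with data $\eta\varphi$. This limit is $\approx\eta\|e^{\delta_0z\Delta}\varphi\|_{\mathcal E(1,\infty)}>0$, independent of $\rho$.

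What your computation does prove is the version in which $T_0/r_1^2$ is included in $\delta$. That is also what the paper actually uses: Lemma~\ref{lem:un-seq} is invoked in the proof of Lemma~\ref{lem:collision-duration} with $(d_n-c_n)^{1/2}\ll r_n$, and there too $t_n^2<r_n$ should read $t_n\ll r_n^2$. Under that hypothesis your Gronwall bound $\|w(t)\|_{\mathcal E(r_1,r_2)}^2\lesssim T_0/r_1^2\le\delta$, combined with $\varepsilon_0(C\delta)$ for $v$, finishes the proof immediately, with no bootstrap. The paper itself only cites \cite{Ary25ra} here, so the correction has to be made in the statement.
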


\begin{lem}
\label{lem:un-seq}
     If $\theta_n\in \bbT$, $0 < t_n^2 < r_n \ll \mu_n \ll R_n $ and \( u_n \) a sequence of solutions of $\eqref{eqn:GL}$ such that \( u_n(t) \) is defined for \( t \in [0, t_n] \) and
\[
\lim_{n \to \infty} \|u_n(0) - e^{i\theta_n} W_{\mu_n}\|_{\mathcal{E}(r_n/2, 2R_n)} = 0,
\]
then
\[
\lim_{n \to \infty} \sup_{t \in [0,t_n]} \|u_n(t) - e^{i\theta_n} W_{\mu_n}\|_{\mathcal{E}(r_n, R_n)} = 0.
\]
\end{lem}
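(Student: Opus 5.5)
We argue by contradiction combined with a rescaling that reduces the statement to the localized short-time lemma (Localized short time evolution close to $W$). Suppose the conclusion fails. Then, after passing to a subsequence, there are $\epsilon_0>0$ and times $s_n\in[0,t_n]$ such that
\[
\|u_n(s_n)-e^{i\theta_n}W_{\mu_n}\|_{\calE(r_n,R_n)}\ge \epsilon_0 .
\]
We first normalize the bubble. Set $v_n(t,r):=\mu_n^{\frac{D-2}{2}}u_n(\mu_n^2 t,\mu_n r)$. By the scaling invariance, $v_n$ solves \eqref{eqn:GL} on $[0,t_n/\mu_n^2]$, and the hypothesis and contradiction assumption become statements about $e^{i\theta_n}W$ on $(\tilde r_n/2,2\tilde R_n)$ and $(\tilde r_n,\tilde R_n)$, where $\tilde r_n=r_n/\mu_n\to0$ and $\tilde R_n=R_n/\mu_n\to\infty$. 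Since $t_n^2<r_n\ll\mu_n$, the rescaled time horizon $\tau_n:=t_n/\mu_n^2$ is very small: $\tau_n\le \sqrt{r_n}/\mu_n^2$, and $\tau_n^2\le r_n/\mu_n^4\ll\mu_n^{-3}$. More precisely, we must check that $\tau_n\le\delta_0\tilde r_n^2$ and $\tau_n^2/1\to0$, which are the smallness conditions of the localized lemma with $\mu_0=1$.

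Next we verify the remaining hypotheses of the localized lemma for $v_n$ on $[0,\tau_n]$. The data closeness $\delta_n:=\|v_n(0)-e^{i\theta_n}W\|_{\calE(\tilde r_n/2,2\tilde R_n)}+\tau_n^2\to0$ holds by the hypothesis. The uniform bound $\sup_t\|v_n(t)\|_\calE\lesssim1$ is implicit in the framework; these lemmas are applied to solutions bounded in $\calE$, as in \cite{Ary25ra}. Finally, we check the condition relating the time horizon to the inner radius. If instead the statement is read in the unscaled form, as the lemma literally has $T_0^2/\mu_0$ and $T_0\le\delta_0 r_1^2$, we apply the localized lemma directly to $u_n$ with $r_1=r_n$, $r_2=R_n$, $\mu_0=\mu_n$ and $T_0=t_n$. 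Then $T_0^2/\mu_0=t_n^2/\mu_n<r_n/\mu_n\to0$. The requirement $t_n\le\delta_0 r_n^2$ is the delicate point, and there are two ways to handle it. One is to choose the scaling so that it follows from $t_n^2<r_n$ in the paper's normalization. The other, which I would try first, is to avoid it: cover $[0,t_n]$ by time steps of length $\delta_0 r_n^2$ and iterate the lemma, shrinking the annulus slightly at each step. This, however, loses a factor at each step.

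The cleaner route, and the main step, is therefore to run the proof of the localized lemma directly rather than citing it. We use Lemma~\ref{lem:localized-energy} with a cutoff $\phi$ adapted to the annulus, $\phi=1$ on $(r_n,R_n)$, supported in $(r_n/2,2R_n)$, with $|\partial_r\phi|\lesssim 1/r$. We apply it to the difference $w_n=u_n-e^{i\theta_n}W_{\mu_n}$. Since $e^{i\theta_n}W_{\mu_n}$ is stationary, $w_n$ solves the linearized equation plus nonlinear terms. The energy identity then bounds the growth of $\int\tilde{\mathbf e}(w_n)\phi^2$ by three contributions:
\begin{enumerate}
\item the flux terms, controlled by $\int_0^{t_n}\|\partial_tu_n\|_{L^2}\|\partial_ru_n\|_{L^2}/r_n\,dt\lesssim t_n^{1/2}/r_n$, which is small;
\item the potential term $\int |W_{\mu_n}|^{p-1}|w_n||\partial_tw_n|$, controlled by $\|W_{\mu_n}\|_{L^\infty}$ times $t_n^{1/2}$, which gives a bound of order $t_n^{1/2}\mu_n^{-1}$ up to powers, small because $t_n^2\ll\mu_n$;
\item the term $\int|w_n|^{p}|\partial_tw_n|$, handled by a bootstrap together with the radial Sobolev estimate \eqref{eqn:r-sobolev} on the annulus.
\end{enumerate}
The parabolic dissipation term has a good sign. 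Closing the bootstrap shows that $\sup_{t\le t_n}\|w_n(t)\|_{\calE(r_n,R_n)}\to0$, which contradicts the choice of $s_n$.

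I expect the main obstacle to be the flux term near the inner radius $r_n$, where $|\partial_r\phi|\sim1/r_n$. Controlling it needs exactly the scale separation $t_n^2<r_n$ (in the paper's normalization), and it is here that the hypothesis is used.
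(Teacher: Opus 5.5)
Your final route (the computation of Lemma~\ref{lem:localized-energy} applied directly to $w_n=u_n-e^{i\theta_n}W_{\mu_n}$, using that $e^{i\theta_n}W_{\mu_n}$ is stationary) is a good idea. It is also more self-contained than the paper, which only defers to \cite{Ary25ra}, i.e.\ to the localized short-time evolution lemma. But the step you flagged genuinely fails. Every error term in that identity is bounded by $r_n^{-1}\|\p_t u_n\|_{L^2}$ times a uniform constant, so the growth of $\int\tilde{\mathbf e}(w_n)\phi^2r^{D-1}\ud r$ on $[0,t_n]$ is of size $\sqrt{t_n}/r_n$. Your potential term is of size $\sqrt{t_n}/\mu_n$. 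Neither is small under $t_n^2<r_n$ or $t_n^2\ll\mu_n$.

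These hypotheses are not scale invariant, while $t_n\le\delta_0r_n^2$ is. So no normalization turns one into the other: your rescaled condition $\tau_n\le\delta_0\tilde r_n^2$ is the same condition. The iteration would need $t_n/(\delta_0r_n^2)\to\infty$ steps. In fact the statement as literally written is false. Take $\theta_n=0$, $\mu_n=1$, $r_n\to0$, $R_n\to\infty$, $t_n=\sqrt{r_n}/2$, and $u_n(0)=W+\epsilon\,\psi_{r_n/8}$ with a fixed nonzero $\psi\in C_c^\infty([0,1))$ and $\epsilon$ small. The hypothesis holds trivially. However, for $t\le r_n^2<t_n$ the solution equals $W+e^{tz\Delta}\epsilon\psi_{r_n/8}+o(\epsilon)$ in $\calE$. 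By scaling, $\|e^{r_n^2z\Delta}\psi_{r_n/8}\|_{\calE(r_n,R_n)}\to\|e^{64z\Delta}\psi\|_{\calE(8,\infty)}$, which is positive by analyticity.

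The intended hypothesis is $\sqrt{t_n}\ll r_n$. This is exactly how the lemma is invoked in the proof of Lemma~\ref{lem:collision-duration}, with $t_n=d_n-c_n$ and $(d_n-c_n)^{1/2}\ll r_n$. Likewise, $T_0^2/\mu_0$ in the localized lemma should read $T_0/\mu_0^2$. You must also add $\sup_n\sup_{t\le t_n}\|u_n(t)\|_{\calE}<\infty$. This holds in the application, and via \eqref{eq:tension-L2} it gives $\int_0^{t_n}\|\p_tu_n\|_{L^2}^2\,\ud t\lesssim1$.

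With these, your argument closes with no rescaling and no bootstrap. On $\mathrm{supp}\,\phi\subset[r_n/2,2R_n]$, the explicit profile gives $|W_{\mu_n}|^{p-1}\lesssim r^{-2}$, and \eqref{eqn:r-sobolev} gives $|w_n|^{p-1}\lesssim r^{-2}$. Hence $|f(u_n)-f(e^{i\theta_n}W_{\mu_n})|\phi\lesssim r^{-2}|w_n|\phi$. This term, the flux term and the $r^{-2}w_n\,\p_t\bar u_n$ term are all $\lesssim r_n^{-1}\|w_n\|_{\calE}$ in $L^2$. Dropping the dissipative term gives
\[
\|w_n(t)\|_{\calE(r_n,R_n)}^2\le\|w_n(0)\|_{\calE(r_n/2,2R_n)}^2+C\,\frac{\sqrt{t_n}}{r_n}\Big(\int_0^{t_n}\|\p_tu_n\|_{L^2}^2\,\ud t\Big)^{1/2}\to0 .
\]
This does not even use $r_n\ll\mu_n\ll R_n$.
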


\section{The Sequential Decomposition}\label{sec:3}

\subsection{Identification of the Body Map}

In this subsection we aim to establish the proposition of the existence of the body map in case of finite time blow-up.

\begin{prop} [Body map in case of finite time blow-up]
\label{body-map}

 Let $u_0\in \calE$ and let $u(t)$ be a solution to $\eqref{eqn:GL}$ with $u(0)=u_0$. Suppose that $T_+=T_+(u_0)<\infty$ and $\sup_{t\in[0,T_+)}\left \| u(t) \right\|_{\calE} <\infty$.
 Then, there exists a mapping $u^* \in \mathcal{E}$ such that for any $r_0>0$,

\begin{align}\label{eqn:con-of-radiation}
\lim _{t \to T_*}\left\|u(t)-u^*\right\|_{\mathcal{E}\left(r \geq r_0\right)}=0 .    
\end{align}

Moreover, there exist positive constants $L,K>0$ such that for each $r_0\in (0,\infty]$,

\begin{align}
    \label{eqn:con-norm}
    \lim_{t\to T_+} \int_{0}^{r_0} |\p_r u(t)|^2r^{D-1} \mathrm{d}r =L +\int_{0}^{r_0} |\p_r u^*|^2r^{D-1} \mathrm{d}r, \\
    \lim_{t\to T_+} \int_{0}^{r_0} | u(t)|^{\frac{2D}{D-2}}r^{D-1} \mathrm{d}r =K +\int_{0}^{r_0} | u^*|^{\frac{2D}{D-2}}r^{D-1} \mathrm{d}r.
\end{align}
\end{prop}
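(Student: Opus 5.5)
We argue in the spirit of Struwe and Qing: first show that $u(t)$ has a limit in the energy topology away from the origin, and then identify the concentrated quantities $L,K$ at the origin.

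\emph{Step 1: Cauchy property in $L^2$.} By \eqref{eq:tension-L2} and the uniform bound on $\|u(t)\|_{\calE}$, we have $\p_t u\in L^2([0,T_+)\times\R^D)$. For $t_1<t_2$, Cauchy--Schwarz gives
\[
\|u(t_2)-u(t_1)\|_{L^2}\le (t_2-t_1)^{1/2}\Big(\int_{t_1}^{t_2}\|\p_t u\|_{L^2}^2\,\ud t\Big)^{1/2}\to 0 .
\]
Hence $u(t)-u_0$ is Cauchy in $L^2$, and $u(t)\to u^*$ in $L^2$ locally. Since $u(t)$ is bounded in $\calE$, it converges weakly in $\calE$ to the same limit, so $u^*\in\calE$.

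\emph{Step 2: strong convergence on $r\ge r_0$.} The radial Sobolev bound \eqref{eqn:r-sobolev} gives $|u(t,r)|\lesssim r^{-(D-2)/2}$ uniformly in $t$. On $\{r\ge r_0/2\}$ the nonlinearity is therefore bounded, and the equation there is uniformly parabolic, since $\Re z>0$. Localize with a cutoff $\phi=1-\chi_{r_0/2}$. Local parabolic regularity for $z\Delta$ (energy estimates against $\phi^2\,\p_t \bar u$ and $\phi^2\Delta\bar u$), together with $\p_t u\in L^2_{t,x}$, should give smoothness of $u$ on $[T_+/2,T_+)\times\{r>r_0/2\}$. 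Combined with Step 1 this upgrades to
\[
\|u(t)-u^*\|_{\calE(r_0,R)}\to 0 \quad\text{for every finite } R.
\]

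\emph{Step 3: exterior estimate.} We still need smallness of $\|u(t)\|_{\calE(R)}$ for large $R$, uniformly in $t$ near $T_+$; this is the exterior estimate mentioned in the introduction. Apply Lemma~\ref{lem:localized-energy} with $\phi=1-\chi_R$. The cross term is bounded by
\[
R^{-1}\int_{t}^{T_+}\|\p_t u\|_{L^2}\|\p_r u\|_{L^2}\,\ud s\lesssim R^{-1}(T_+-t)^{1/2}.
\]
The terms involving $\frac{u\,\p_t\bar u}{r^2}$ and $|u|^{p-1}u\,\p_t\bar u$ on $r\ge R$ are controlled via \eqref{eqn:r-sobolev} and the finiteness of $T_+$. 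Thus
\[
\sup_{t\in[t_0,T_+)}\|u(t)\|_{\calE(R)}^2\le \|u(t_0)\|_{\calE(R/2)}^2+o_{t_0\to T_+}(1),
\]
and the first term is small for large $R$ at the fixed time $t_0$. Together with Step 2 this proves \eqref{eqn:con-of-radiation}.

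\emph{Step 4: concentration at the origin.} Split $\int_0^{r_0}|\p_r u(t)|^2r^{D-1}\ud r$ and $\int_0^{r_0}|u(t)|^{2^*}r^{D-1}\ud r$ into the pieces over $[0,\epsilon]$ and $[\epsilon,r_0]$. By Steps 2 and 3, the second piece converges to the corresponding integral of $u^*$. It therefore suffices to show that
\[
\lim_{t\to T_+}\int_0^{\epsilon}|\p_r u(t)|^2 r^{D-1}\,\ud r
\]
exists and has a limit $L$ as $\epsilon\to0$ independent of $r_0$, and likewise for the potential term with limit $K$. Existence of the limit in $t$ follows by combining the monotone total energy (Proposition~\ref{prop:cauchy}(5)) with the localized energy identity for $\phi=\chi_\epsilon$: the flux terms vanish as $t\to T_+$ because $\int_t^{T_+}\|\p_t u\|_{L^2}^2\,\ud s\to0$ and $T_+-t\to0$. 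This gives the limit of the energy $E(u(t);0,\epsilon)$. To separate the kinetic and potential parts, I would use the Pohozaev/virial-type identity obtained by pairing the equation with $\chi_\epsilon\bar u$, namely $\la \chi_\epsilon u\mid \bar z^{-1}\p_t u\ra\approx -\|\p_r u\|^2+\|u\|_{2^*}^{2^*}$ locally. The left side tends to zero along a full time limit only after averaging, since $\p_t u$ is merely $L^2$ in time.

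I expect this last point to be the main obstacle: showing that the kinetic and potential concentrations have separate limits for all times, not just along a sequence. I would handle it by proving that $t\mapsto\int \chi_\epsilon|u|^2r^{D-1}\ud r$ (a local $L^2$ mass of $u$) has a controlled derivative. Failing that, I would define $L,K$ through the limits of the energy and of the tension identity and check consistency. Positivity of $L$ and $K$ follows from the blow-up criterion in Proposition~\ref{prop:cauchy}(2) together with small data theory: if no energy concentrated at the origin, the solution would extend past $T_+$.
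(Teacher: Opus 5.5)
Your Steps 1--3 are sound and give \eqref{eqn:con-of-radiation}:
\begin{itemize}
\item Step 1 is a slightly stronger form of the paper's weak-limit argument.
\item Step 2 replaces the $\varepsilon$-regularity Lemma~\ref{lem:parabolic-regularity} and the regular set. Instead you observe that \eqref{eqn:r-sobolev} makes the potential $|u|^{4/(D-2)}$ bounded on $\{r\ge r_0/2\}$, so linear interior parabolic estimates apply up to $T_+$. In the radial setting this is simpler and fully adequate.
\item Step 3 is the paper's exterior estimate \eqref{outer-ball-estimate}.
\end{itemize}

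The genuine gap is Step 4, as you suspected. What you actually prove there is that the localized nonlinear energy $E(u(t);0,\epsilon)$ converges, from monotone total energy plus convergence away from the origin. Nothing separates the kinetic and potential parts. By Hardy, the left-hand side of your localized virial identity is $O(\epsilon\|\p_t u(t)\|_{L^2})$, which is only square integrable in $t$. So it controls $\int\chi_\epsilon(|\p_r u|^2-|u|^{2^*})r^{D-1}\ud r$ only in time average, not pointwise. Your fallback of defining $L,K$ ``through the limits'' presupposes that those limits exist. So \eqref{eqn:con-norm} remains unproved.

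The paper handles this step with Qing's interlacing argument:
\begin{itemize}
\item Take $t_j$ realizing the $\limsup$ and $s_j$ realizing the $\liminf$ of $\int_0^{r_0}|\p_r u|^2r^{D-1}\ud r$.
\item Use \eqref{eqn:con-of-radiation} to see that at the times $t_j$ the excess sits in balls of radius $\lambda_j\to0$.
\item Interlace $s_j\le t_j\le s_{j+1}$.
\item Transport the concentrated energy from $t_j$ to $s_{j+1}$ via Lemma~\ref{lem:localized-energy}, using a cutoff $\xi_j$ equal to $1$ on $[0,\lambda_j]$ and supported in $[0,\delta]$. The flux terms are $o(1)$ because $\int_{s_j}^{T_+}\|\p_t u\|_{L^2}^2\,\ud t\to0$.
\end{itemize}

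Be aware, though, that this runs into exactly your obstacle. Since $2\Re(|u|^{p-1}u\,\p_t\bar u)=\frac{2}{2^*}\p_t|u|^{2^*}$, the nonlinear term in that identity equals $\frac{2}{2^*}$ times the change of $\int|u|^{2^*}\xi_j^2r^{D-1}\ud r$. In other words, the kinetic identity is the nonlinear-energy identity in disguise, and showing this term is small is equivalent to the claim itself.

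The paper bounds this term by an $L^\infty$ estimate on $(s_j,T_+)$ taken from \eqref{eq:continuous-dependence}. However, a bound on $\|u(t)\|_{L^\infty}$ uniform up to $T_+$, combined with the bounded $L^{2^*}$ norm, would give $\|u\|_{S([0,T_+))}<\infty$. That contradicts Proposition~\ref{prop:cauchy}(2). So this step is not closed by the paper's argument either.

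What is rigorously available with these tools is the convergence of the localized nonlinear energy that you obtained. That is what the later derivation of \eqref{eq:N-bubbles-bu} actually needs, with $N\ge1$ obtainable instead from local well-posedness near $u^*$.
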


\begin{proof}

We first prove the existence of $u^*$ as a weak limit in  $\dot{H}^1$ . It is equivalent to showing that all weak limits of sequences $\{u(t_n)\}_n$, where $t_n\to T_+^-$, coincide. 

We argue through contradiction. Suppose that there exist two sequences $\{u(t_{n,1})\}_n, \{u(t_{n,2})\}_n$ such that their weak limits, say $u_1,u_2$, do not coincide, then  there exists a Schwartz function $\varphi$ that satisfies $ \langle\varphi\mid u_1\rangle_{\dot{H}^1}\neq \langle\varphi\mid u_2\rangle_{\dot{H}^1} $ . However 
\begin{align}
    &|\int_{\bbR^D} [u(t_{2,n},x)- u(t_{1,n},x)] \varphi(x) \rm{d} x| =  |\int_{t_{1,n}}^{t_{2,n}}\int_{\bbR^D} \p_s u(s,x)\varphi(x) \rm{d} x \rm{d} s| \\
    &\leq \int_{t_{1,n}}^{t_{2,n}} \| \p_s u(s,x)\|_{L_x^2} \| \varphi(x)\|_{L_x^2} \rm{d} x \rm{d} s \lesssim ( \int_{t_{1,n}}^{t_{2,n}} \| \p_s u(s,x)\|_{L_x^2}^2 \rm{d} s)^{\frac{1}{2}}T_+^{\frac{1}{2}} .
\end{align}

This leads to a contradiction since $T_+<\infty$ and $ \int_{0}^{T_+} \| \p_s u(s,x)\|_{L_x^2}^2 \rm{d} s<\infty$.

To improve the weak convergence to strong convergence , we establish a lemma that is analogous to the standard parabolic regularity theory.

\begin{lem} 
\label{lem:parabolic-regularity}
    For any \( k \in \mathbb{N} \), there are \( \varepsilon_0 > 0 \) ,  and \( C \) such that: for any $r>0$ and $x\in \bbR^d$,   if \( u \) is a solution of equation $\eqref{eqn:GL}$ in \( Q_r(x,T_+) \), where \( Q_r(x,T_+)  := B_r(x) \times [T_+-r^2,T_+) \) for \( r > 0 \), and satisfies
\[
\varepsilon := \|u\|_{L_t^\infty(\dot{H}_x^1 \cap L_x^{\frac{2D}{D-2}})(Q_r(x,T_+) )} < \varepsilon_0,
\]
then \( u \) is smooth on \( \overline{Q_{\frac{r}{2}}}(x,T_+) \) with bounds
\[
\max_{\overline{Q_{\frac{r}{2}}(x,T_+)}} |D^k u| \leq C\varepsilon.
\]
\end{lem}

\begin{proof}[Proof of Lemma ~\ref{lem:parabolic-regularity} ]
    By scaling we may assume $r=1$. The proof is the same as the proof of Lemma 5.1 in \cite{GR18}.
\end{proof}

Define the regular subset as follows:

\begin{align}
\label{def:regular-set}
\calR=\{x\in \bbR^D: \forall \veps>0, \exists R,\ \forall t\in [0,T_+),   \int_{B_R(x)} |u(t,y)|^{\frac{2D}{D-2}} + |\nabla u(t,y)|^2+\frac{|u(t,y)|^2}{|y|^2} \mathrm{d} y \leq \veps .\}
\end{align}

Since $u$ is radially symmetric and is type II, we have $\calR=\bbR \backslash\{0\}$. 
For any $x\in \calR$, there exists $r>0$ such that $u\in C^2(\overline{Q_{\frac{r}{2}(x,T_+)}})$. Applying Arzel\`a-Ascoli theorem, $u(t)\to u^*$ in $C^1(B(x,\frac{r}{2}))$ as $t\to T_+$. Then we have $u(t)\to   u^*$ in $\dot{H}^1_{loc}(\bbR^D\backslash\{0\})$. 

Taking the smooth function in $\eqref{eqn:localized-energy}$ as $\phi(r)=1-\chi(\frac{r}{R})$, where $R$ is large enough , $t_1$ as $t$ and $t_2$ as $0$. We get a uniform exterior energy estimate
\begin{align}
\label{outer-ball-estimate}
   &  \| u(t) \|_{\calE(r\geq 2R)}^2 \leq \| u(0) \|_{\calE(r\geq R)}^2 \\
   &\quad + 4 \left(\int_{t_1}^{t_2} \int_0^\infty |\partial_t u|^2 |\partial_r \phi|^2 \phi^2 r^{D-1} \, \mathrm{d}r\mathrm{d}t \right)^{\frac{1}{2}} 
   \left(\int_{t_1}^{t_2} \int_0^\infty |\partial_r u|^2  r^{D-1} \, \mathrm{d}r\mathrm{d}t \right)^{\frac{1}{2}} 
   \\
&\quad + 2 \left( \int_{t_1}^{t_2} \int_0^\infty |u|^{2p} \phi^2 r^{D-1} \, \mathrm{d}r\mathrm{d}t \right)^{1/2} \left( \int_{t_1}^{t_2} \int_0^\infty (\partial_t u)^2 \phi^2 r^{D-1} \, \mathrm{d}r\mathrm{d}t \right)^{1/2} \\
&\quad + 2 \left( \int_{t_1}^{t_2} \int_0^\infty \frac{|u|^2}{r^4} \phi^2 r^{D-1} \, \mathrm{d}r\mathrm{d}t \right)^{1/2} \left( \int_{t_1}^{t_2} \int_0^\infty (\partial_t u)^2 \phi^2 r^{D-1} \, \mathrm{d}r\mathrm{d}t \right)^{1/2} \\
&\lesssim  \| u(0) \|_{\calE(r\geq R)} ^2+ \frac{1}{R},
\end{align}

which ensures the global convergence in $\dot{H}(r\geq r_0)$.

To see $\eqref{eqn:con-norm}$, we argue as in the proof of Proposition 2.1 in \cite{Qin95}. Denote

\begin{align}
\label{eq:sup-inf}
\limsup_{t \to T_+} \int_0^{r_0} |\partial_r u(t)|^2 r^{D-1}\mathrm{d}r &= \lim_{t_i \to T_+} \int_0^{r_0} |\partial_r u(t_i)|^2 r^{D-1}\mathrm{d}r = M + \int_0^{r_0} |\partial_r u^*|^2 r^{D-1}\mathrm{d}r, \\
\liminf_{t \to T_+} \int_0^{r_0} |\partial_r u(t)|^2 r^{D-1}\mathrm{d}r &= \lim_{s_i \to T_+} \int_0^{r_0} |\partial_r u(s_i)|^2 r^{D-1}\mathrm{d}r = m + \int_0^{r_0} |\partial_r u^*|^2 r^{D-1}\mathrm{d}r.
\end{align}

We will first show that \( M = m \). Take \( \lambda_j = \frac{r_0}{j} \). Then we have

\begin{equation}
\label{eq:a}
\begin{split}
&\int_0^{\lambda_j} |\partial_r u(t)|^2 r^{D-1}\mathrm{d}r 
= \int_0^{r_0} |\partial_r u(t)|^2 r^{D-1}\mathrm{d}r - \int_{\lambda_j}^{r_0} |\partial_r u(t)|^2 r^{D-1}\mathrm{d}r \\
&\geq \int_0^{r_0} |\partial_r u(t)|^2 r^{D-1}\mathrm{d}r - \int_{\lambda_j}^{r_0} \left||\partial_r u(t)|^2 - |\partial_r u^*|^2\right| r^{D-1}\mathrm{d}r 
- \int_{\lambda_j}^{r_0} |\partial_r u^*|^2 r^{D-1}\mathrm{d}r.
\end{split}
\end{equation}

Then since \( u(t) \to u^* \) in $\calE(r\geq R)$ for each $R>0$ as \( t \to T_+ \), there exists a subsequence \( \{t_{i_j}\} \) such that
\[
\int_{\lambda_j} ^{r_0} \left||\partial_r u(t_{i_j})|^2 - |\partial_r u^*|^2\right| r^{D-1}\mathrm{d}r \to 0, \quad \text{as } j \to \infty,
\]

which combined with $\eqref{eq:sup-inf}$ and $\eqref{eq:a}$ implies
\[
\begin{aligned}
\int_0^{\lambda_j} |\partial_r u(t_{i_j})|^2 r^{D-1}\mathrm{d}r &\geq M + \int_0^{r_0} |\partial_r u^*| r^{D-1}\mathrm{d}r - \int_{\lambda_j}^{r_0} |\partial_r u^*|^2 r^{D-1}\mathrm{d}r - o_j(1) \\
&\geq M + \int_0^{\lambda_j} |\partial_r u^*| r^{D-1}\mathrm{d}r - o_j(1),
\end{aligned}
\]
where \( o_j(1) \to 0 \) as \( j \to \infty \). Thus setting \( t_j = t_{i_j} \) we get
\begin{align}
\label{eq:lower-in}
\int_0^{\lambda_j} |\partial_r u(t_j)|^2 r^{D-1}\mathrm{d}r \geq M - o_j(1). 
\end{align} 

Up to subsequences we can arrange that \( s_j \leq t_j \leq s_{j+1} \leq t_{j+1} \). Then, 
for  $\delta>0$ fixed and $j$ sufficiently large, we choose a cut-off function $\xi_j\in C^{\infty}_c(\bbR^d)$ such that $0\leq \xi_j\leq 1$, $ \xi|_{|x|\leq \lambda_j}=1$, $\xi_j|_{|x|\geq \delta}=0$ and $|\nabla\xi_j(x)|\lesssim\frac{1}{|\delta-\lam_j|}$.

Calculating $\int_0^{\infty} |\p_ru(s_{j+1})|^2r^{D-1} \mathrm{d}r- \int_0^{\infty} |\p_ru(t_{j})|^2r^{D-1} \mathrm{d}r$ as in Lemma $\ref{lem:localized-energy}$ and 
estimating the nonlinear term in \( L^\infty \) on \( (s_j, T_+) \) using $\eqref{eq:continuous-dependence}$ in Proposition $\ref{prop:cauchy}$ with $v_0=0$ we get

\begin{align}
\label{eq:lower-in-delta}
\int_0^\delta |\partial_r u(s_{j+1})|^2 r^{D-1}\mathrm{d}r &\geq \int_0^{\lambda_j} |\partial_r u(t_j)|^2 r^{D-1}\mathrm{d}r - 2\int_{s_j}^{T_+} \|\partial_t u(t)\|_{L^2}^2 \mathrm{d}t \\
&\quad - C_1\sqrt{(T_+ - s_j)\lambda_j^D} \sqrt{\int_{s_j}^{T_+} \|\partial_t u\|_{L^2}^2\mathrm{d}t} - C_2\frac{\sqrt{T_+ - s_j}}{|\delta - \lambda_j|} \sqrt{\int_{s_j}^{T_+} \|\partial_t u\|_{L^2}^2\mathrm{d}t},
\end{align} 

where the constants \( C_1, C_2 > 0 \) depend on \( \sup_{t \in [0, T_+)} \|u(t)\|_{\mathcal{E}} \). Consequently, sending \( j \to \infty \) in $\eqref{eq:lower-in-delta}$ and using $\eqref{eq:lower-in}$ we get

\begin{align}
\lim_{s_j \to T_+} \int_0^{r_0} |\partial_r u(s_j)|^2 r^{D-1}\mathrm{d}r &= \lim_{s_j \to T_+} \int_0^{r_0} |\partial_r u(s_j)|^2 r^{D-1}\mathrm{d}r + \int_0^\delta |\partial_r u(s_j)|^2 r^{D-1}\mathrm{d}r \\
&\geq M + \int_\delta^{r_0} |\partial_r u^*|^2 r^{D-1}\mathrm{d}r.
\end{align}

Taking \( \delta \to 0 \), we can conclude that \( m = M \). Thus, the limit \( L = M \) exists in $\eqref{eqn:con-norm}$. Furthermore, \( L > 0 \) due to the definition of regular set and that $0\notin \calR$. We can argue in the same way and show that there exists \( K > 0 \) such that \( \lim_{t \to T_+} \int_0^{r_0} |u(t)|^{2^*} r^{D-1}\mathrm{d}r = K + \int_0^{r_0} |u^*|^{2^*} r^{D-1}\mathrm{d}r \).

\end{proof}

\subsection{Localized Sequential Bubbling}

We define a localized distance function  
\begin{align}
\label{def:delta}
    \delta_R(u)=\inf_{M,\vec{\theta},\vec{\lambda}} \left(\| u-\calW(\vec{\theta},\vec{\lambda} )\|_{\E}^2 +\sum_{j=1}^M (\frac{\lambda_j}{\lambda_{j+1}})^{\frac{D-2}{2}} \right)^{\frac{1}{2}}
\end{align}

where the infimum is taken over all vectors $\vec \lam = (\lam_1, \dots, \lam_M) \in (0, \infty)^M$ and all $\vec \theta = \{ \theta_1, \dots, \theta_M\} \in \bbT^M$, and we use the convention that the last scale $\lambda_{M+1}=R$.

\begin{lem}[Compactness Lemma]
\label{lem:compactness}
     Let \( u_n \in \mathcal{E} \) be a sequence with \( \limsup_{n \to \infty} \|u_n\|_{\mathcal{E}} < \infty \). Let \( \rho_n \in (0, \infty) \) be a sequence and suppose that
\[
\lim_{n \to \infty} \left( \rho_n \|\mathcal{T}(u_n)\|_{L^2} \right) = 0.
\]

Then, there exists a sequence \( R_n \to \infty \) so that, up to passing to a subsequence of the \( u_n \), we have,
\[
\lim_{n \to \infty} \delta_{R_n \rho_n} (u_n) = 0.
\]

The subsequence \( u_n \) can be chosen so that there are fixed $ M \in \mathbb{N} $, a sequence $\vec{\lambda}_n \in (0, \infty)^M$  ,  $\vec{\theta}\in \bbT^M$ and $ C_0 > 0 $with
\[
\lim_{n \to \infty} \left( \|u_n - \mathcal{W}(\vec{\theta}, \vec{\lambda}_n)\|_{\mathcal{E}(r \leq R_n \rho_n)}^2 + \sum_{j=1}^{M-1} \left( \frac{\lambda_{n,j}}{\lambda_{n,j+1}} \right)^{\frac{D-2}{2}} \right)^{\frac{1}{2}} = 0,
\]
and,
\[
\lambda_{n,M} \leq C_0 \rho_n, \quad \forall n.
\]
\end{lem}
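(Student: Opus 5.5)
First rescale so that $\rho_n=1$. Set $v_n(r):=\rho_n^{\frac{D-2}{2}}u_n(\rho_n r)$. Scale invariance of $\|\cdot\|_{\calE}$ keeps $v_n$ bounded in $\calE$. Since $\calT$ scales like $\rho_n^{-2}$ in the $\dot H^1$ normalization, $\|\calT(v_n)\|_{L^2}=\rho_n\|\calT(u_n)\|_{L^2}\to 0$. So it suffices to find $R_n\to\infty$ with $\delta_{R_n}(v_n)\to0$, together with the stated bound $\lambda_{n,M}\le C_0$. The plan is a concentration-compactness/bubble-tree extraction for Palais–Smale-type sequences in the radial class, adapted to complex values. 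This is the argument of \cite{Law23} and \cite{JL23b}; the phases $\theta_j$ only enter through the $\mathbb S^1$-invariance of $f$.

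\textbf{Step 1 (the body at scale one).} Pass to a subsequence with $v_n\rightharpoonup v_\infty$ weakly in $\calE$. Radial Sobolev embedding \eqref{eqn:r-sobolev} gives uniform $L^\infty$ bounds on compact subsets of $(0,\infty)$, hence local compactness of the nonlinearity away from $r=0$. So $\calT(v_\infty)=0$ in the distributional sense and $v_\infty$ is a finite-energy radial solution of \eqref{eqn:stationary}. The complex version of the uniqueness statement (radial solutions have constant phase by the ODE) shows $v_\infty=0$ or $v_\infty=e^{i\theta}W_\lambda$. Next I upgrade to strong convergence on every annulus $\{r_1\le r\le r_2\}$, $0<r_1<r_2<\infty$. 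For this, pair $\calT(v_n)-\calT(v_\infty)$ with $(v_n-v_\infty)\chi$ for a cut-off $\chi$ supported in the annulus. The $L^2$ smallness of $\calT(v_n)$ and local $L^\infty$ compactness then force $\|\p_r(v_n-v_\infty)\|_{L^2(\text{annulus})}\to0$.

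\textbf{Step 2 (no energy escape at large radii in the relevant sense).} By a diagonal argument, choose $R_n\to\infty$ slowly so that $\|v_n-v_\infty\|_{\calE(R_n^{-1}... )}$ is handled. Concretely, I pick $R_n$ such that $\|v_n-v_\infty\|_{\calE(r_n^{-1},R_n)}\to0$ for some $r_n\to\infty$ as well. Nothing needs to be controlled beyond $R_n$, because $\delta_{R_n}$ only measures $\calE(r\le R_n)$ plus the ratio $(\lambda_M/R_n)$. In fact $\delta_R$ as written uses the full norm, so I would interpret it as localized to $r\le R$, in line with the second display of the statement. If $v_\infty=e^{i\theta}W_\lambda$, it becomes the outermost bubble with $\lambda_{n,M}=\lambda\le C_0$. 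Its tail on $[1,R_n]$ is small because $\|W_\lambda\|_{\calE(\ge \rho)}\to 0$ as $\rho\to\infty$ is not needed: the bubble is kept, not discarded.

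\textbf{Step 3 (inner bubbles, induction on energy).} Next handle the region $r\le r_n^{-1}$. If $\limsup_n\|v_n\|_{\calE(r\le r_n^{-1})}=0$ we are done. Otherwise define $\mu_n\to0$ as the largest scale at which a fixed amount $\delta_1$ of energy sits in $[0,\mu_n]$, with $\delta_1$ chosen from Lemma \ref{lem:basic-trapping}/Lemma \ref{lem:localized-coercivity}, and rescale by $\mu_n$. The rescaled sequence still has $\|\calT\|_{L^2}\to0$ after multiplying by $\mu_n\ll1$. Step 1 applied to it yields a nonzero weak limit, which must be a bubble $e^{i\theta'}W_{\lambda'}$; nonvanishing is exactly where the choice of $\mu_n$ is used. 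Each bubble carries the quantized energy $\|W\|_{\calE}^2$, and the neck regions between consecutive scales carry vanishing energy. Since $\sup_n\|v_n\|_\calE<\infty$, the iteration terminates after finitely many steps, giving a fixed $M$ and scale ratios $\lambda_{n,j}/\lambda_{n,j+1}\to0$.

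\textbf{Main obstacle.} The hard step is the \emph{no-neck energy} property. On intermediate annuli $\lambda_{n,j}\ll r\ll\lambda_{n,j+1}$ the $\calE$-norm of $v_n-\calW$ must tend to zero, not just stay small. I would prove it by multiplying $\calT(v_n)$ by $\bar v_n\phi$ with a logarithmic cut-off $\phi$ adapted to the annulus. The Pohozaev-type identity from multiplying by $r\p_r\bar v_n\phi$ controls $\int|\p_rv_n|^2$ by $\int|v_n|^{2^*}$ up to $o(1)$ errors from $\|\calT(v_n)\|_{L^2}\cdot\|r\p_r v_n\|_{L^2(\text{annulus})}$. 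The latter needs care: $r\|\p_r v_n\|$ is bounded by (annulus radius)$\times$energy, and after rescaling by $\rho_n$ this is $\ll1$ only because $\lambda_{n,j+1}\lesssim C_0$. Radial Sobolev then makes $|v_n|^{2^*-2}$ small on the neck, so the identity forces the neck energy to $0$. This same bound on the outer radius is what produces the constraint $\lambda_{n,M}\le C_0\rho_n$ and dictates how slowly $R_n$ may grow.
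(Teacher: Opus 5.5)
Your proposal is correct and follows essentially the same route as the paper, which simply defers to the bubble-tree compactness argument of \cite{Law23}. The shared steps are: rescale to $\rho_n=1$; extract weak limits at successive scales, each $0$ or $e^{i\theta}W_\lambda$ after the constant-phase reduction; upgrade to strong convergence on annuli using $\|\calT(v_n)\|_{L^2}\to0$; and rule out neck energy using that the tension error is only paid on regions of radius $\lesssim\rho_n$, which also yields $\lambda_{n,M}\le C_0\rho_n$ and forces $R_n$ to grow slowly.

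Your reading of $\delta_R$ as localized to $r\le R$ is the intended one. With the full $\calE$ norm, as literally written in the paper's definition, the conclusion already fails for a fixed nonzero radial $u_n=u_0\in C_c^\infty$ with $\rho_n\to0$.
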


\begin{rem}
    The fact that $\vec\theta$ we get is independent of $n$ comes from the details in the proof of Proposition 1.3 in \cite{Law23}, which can also be obtained from the compactness of $\bbT^N$ . 
\end{rem}

\begin{proof}
    The proof is  the same as that in \cite{Law23}. 
\end{proof}

\subsection{The Sequential Decomposition for Finite Time Blow-Up Solutions}
\begin{prop}[Sequential decomposition in case of finite time blow-up]\label{prop:finite-seq-dec}
Let $u_0\in \calE$, and let $u(t)$ denote the solution to $\eqref{eqn:GL}$ with initial data $u_0$. Suppose that $T_+(u_0)<\infty$. Then, there exist a function $u^*\in \calE$ , an integer $N\geq 1$, a sequence of time $\{t_n\}_{n\geq 1}$ such that $t_n \to T_+$, a sequence of phases $\vec{\theta_n}=(\theta_{1,n},\dots, \theta_{N,n})\in \bbT^N$, a sequence of scales $\vec{\lam_n}=(\lam_{1,n},\dots, \lam_{N,n})\in (0,\infty)^N$, and an error $g_n$ defined by 
\[
u(t_n)=\sum_{j=1}^N e^{i\theta_{j,n}}W_{\lambda_{j,n}}+u^*+g_n
\]
with the following properties:
\begin{enumerate}[label={(\roman*).}]
    \item  The integer $N\geq 1$ and the weak limit $u^*$ satisfy,
    \begin{align}
    \label{eq:energy-limit} 
\lim_{t \to T_+} E( u(t)) = N E(W)  + E( u^*); 
\end{align} 
    \item for any $0<\al<A$ 
\begin{align}
\label{eq:N-bubbles-bu} 
\lim_{t \to T_+} E( u(t); 0, \al(T_+ - t)^{\frac{1}{2}}) = NE(W), 
\end{align} 
\begin{align}
\label{eq:en-annulus}
\lim_{t\to T_+} E(u(t); \al (T_+ -t)^{\frac{1}{2}}, A(T_+ -t)^{\frac{1}{2}}) = 0,
\end{align} 
\begin{align}
\label{eq:en-ext-bu} 
\lim_{t \to T_+} E( u(t) - u^*; \al(T_+ - t)^{\frac{1}{2}}, \infty) = 0, 
\end{align} 
and there exists $0< T_0 < T_+$ and function $\rho : [T_0, T_+) \to (0,\infty)$ satisfying, 
\begin{equation} \label{eq:radiation} 
\lim_{t \to T_+} \big((\rho(t) / \sqrt{T_+-t}) + \| u(t) -  u^*\|_{\cE(\rho(t))}\big) = 0; 
\end{equation}

    \item the error $g_n$ and the scales $\vec \lam_n$ satisfy, 
\begin{align}\label{eq:d(t_n)} 
\lim_{ n \to \infty} \Big( \| g_n \|_{\E}^2 + \sum_{j =1}^N \Big( \frac{\lam_{n, j}}{\lam_{n, j+1}} \Big)^{\frac{D-2}{2}}  \Big)^{\frac{1}{2}}  = 0 , 
\end{align} 
where here we adopt the convention that $\lam_{n, N+1} := (T_+ - t_n)^{\frac{1}{2}}$. 
    
\end{enumerate}
\end{prop}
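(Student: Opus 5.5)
The plan follows \cite{JL23b} and \cite{Ary25ra}: first control the energy on the parabolic annuli, then extract bubbles along a well-chosen sequence, and finally count them by energy.

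\textbf{Step 1: exterior region and the annulus.} By Proposition~\ref{body-map}, $u(t)\to u^*$ in $\calE(r\ge r_0)$ for every fixed $r_0>0$. A diagonal argument then gives a function $\rho(t)\to0$ with $\|u(t)-u^*\|_{\calE(\rho(t))}\to0$. To get $\rho(t)\ll\sqrt{T_+-t}$, as required in \eqref{eq:radiation}, we need the stronger statement \eqref{eq:en-ext-bu}, and this is the main obstacle.

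\begin{itemize}
\item Apply Lemma~\ref{lem:localized-energy} with the self-similar cutoff $\phi(t,r)=1-\chi\bigl(r/(\al\sqrt{T_+-s})\bigr)$, run backward from $t$ to a time $s<t$ where the region is already close to $u^*$.
\item The $\p_t\phi$ term has the favourable sign, since the region grows backward in time.
\item The $\p_t u$ terms are bounded by Cauchy--Schwarz against $\int_s^{T_+}\|\p_tu\|_{L^2}^2\to0$. The factor $\int|\p_r\phi|^2$ over a time interval of length $\sim T_+-s$ is $O(1)$ by parabolic scaling.
\item The potential term $|u|^{p-1}u\,\p_t\bar u$ is controlled by the radial Sobolev bound \eqref{eqn:r-sobolev} on the support of $\phi$.
\end{itemize}

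Applied to $u-u^*$ after localizing $u^*$ to a small ball, this gives \eqref{eq:en-ext-bu} and \eqref{eq:en-annulus}. We then choose $\rho(t)$ by a diagonal argument over $\al\to0$.

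\textbf{Step 2: interior energy and the choice of $t_n$.} Since $E(u(t))$ is nonincreasing and bounded below, $E(u(t))\to E_\infty$. Combining this with Step~1 and \eqref{eqn:con-norm} gives
\[
\lim_{t\to T_+}E\bigl(u(t);0,\al\sqrt{T_+-t}\bigr)=E_\infty-E(u^*),
\]
independently of $\al$. Because $\int_0^{T_+}\|\p_tu\|_{L^2}^2<\infty$, we can pick $t_n\to T_+$ with
\[
(T_+-t_n)\,\|\p_tu(t_n)\|_{L^2}^2\to0 .
\]
Indeed, if instead $\|\p_tu\|_{L^2}^2\ge c/(T_+-t)$ near $T_+$, the integral would diverge.

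\textbf{Step 3: extracting the bubbles.} Since $\p_tu=z\calT(u)$, Lemma~\ref{lem:compactness} applies with $\rho_n=\sqrt{T_+-t_n}$. Up to a subsequence it yields $M$ bubbles with fixed phases, scales $\lambda_{n,M}\le C_0\rho_n$, and convergence on $r\le R_n\rho_n$.

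\begin{itemize}
\item Step~1 forces the profile to carry no energy on scales $\sim\rho_n$. So the last bubble satisfies $\lambda_{n,M}\ll\rho_n$, and the $\calE$-error on $r\le R_n\rho_n$ is small.
\item On $r\ge\al\rho_n$ the function $u(t_n)$ is close to $u^*$ by \eqref{eq:en-ext-bu}. Since $u^*$ carries small energy near the origin, we glue the two regions with a cutoff to obtain $g_n\to0$ in $\calE$ and hence \eqref{eq:d(t_n)}.
\item Set $N:=M$. By energy decoupling of asymptotically orthogonal bubbles, $E(u(t_n))\to NE(W)+E(u^*)$.
\end{itemize}

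This gives \eqref{eq:energy-limit} and identifies $E_\infty-E(u^*)=NE(W)$, which yields \eqref{eq:N-bubbles-bu} for all $\al$. Finally, $N\ge1$ because $L>0$ in Proposition~\ref{body-map}: if $N=0$, then $u(t_n)\to u^*$ in $\calE$, contradicting the concentration of $|\p_ru|^2$ at the origin.
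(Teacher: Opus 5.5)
\textbf{Gap in Step 1.} Your Steps 2 and 3 match the paper's argument: the choice of $t_n$ with $(T_+-t_n)\|\p_tu(t_n)\|_{L^2}^2\to0$, Lemma~\ref{lem:compactness} at scale $\sqrt{T_+-t_n}$, excluding a bubble at scale $\sim\sqrt{T_+-t_n}$, and $N\ge1$ from $L>0$. Using monotonicity of $E(u(t))$ to get the interior energy limit is a clean alternative to the paper's appeal to \eqref{eqn:con-norm}. The problem is Step~1. You rightly call it the main obstacle, but the mechanism you describe does not work.

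\begin{itemize}
\item \emph{Wrong sign.} For $\phi(\sigma,r)=1-\chi\bigl(r/(\al\sqrt{T_+-\sigma})\bigr)$ with $\chi$ nonincreasing, $\p_\sigma\phi\ge0$. So the exterior region grows \emph{forward} in time, not backward. In Lemma~\ref{lem:localized-energy} on $[s,t]$, the term $2\int_s^t\int\widetilde{\mathbf e}(u)\phi\,\p_\sigma\phi\,r^{D-1}\ud r\,\ud\sigma\ge0$ is added to the energy at the later time $t$. That is the wrong sign for the upper bound you need; energy leaking from the parabolic ball into the exterior is exactly what has to be ruled out.
\item \emph{Circular comparison.} Comparing with an earlier time $s$ assumes what you want to prove. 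Proposition~\ref{body-map} only gives $u(s)\to u^*$ on $\{r\ge r_0\}$ for \emph{fixed} $r_0>0$ as $s\to T_+$. Closeness of $u(s)$ to $u^*$ on $\{r\gtrsim\al\sqrt{T_+-s}\}$ is the statement itself. This is where the finite-time case differs from the global one: there the region $r\gtrsim\sqrt T$ escapes to infinity, so a fixed earlier time $T_0$ suffices.
\item \emph{Log divergence.} With a moving cutoff the error factor is $\int_s^t\|\p_r\phi(\sigma)\|_{L^\infty}^2\,\ud\sigma\sim\al^{-2}\log\frac{T_+-s}{T_+-t}$. This is $O(1)$ only when $T_+-t\sim T_+-s$, so "$O(1)$ by parabolic scaling" fails on the intervals you would need.
\end{itemize}

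\textbf{The missing idea.} Compare with the \emph{future}, using a \emph{frozen} cutoff.
\begin{itemize}
\item Fix $s$ and take a time-independent $\phi$ equal to $1$ on $[\al\sqrt{T_+-s},r_0]$, so there is no $\p_t\phi$ term.
\item Apply Lemma~\ref{lem:localized-energy} to the solution $u$ on $[s,\tau]$ for $\tau\in(s,T_+)$. Apply it to $u$, not to $u-u^*$, which does not solve \eqref{eqn:GL}.
\item Since $\tau-s\le T_+-s$ and $|\p_r\phi|\lesssim(\al\sqrt{T_+-s})^{-1}$, parabolic scaling bounds the change by $\int_s^{T_+}\|\p_tu\|_{L^2}^2+\al^{-1}\bigl(\int_s^{T_+}\|\p_tu\|_{L^2}^2\bigr)^{1/2}$, uniformly in $\tau$.
\item Let $\tau\to T_+$. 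On the now fixed annulus the localized energy converges to that of $u^*$, which yields \eqref{eqn:fixed-annulus-estimate}.
\item Then \eqref{eq:radiation}, \eqref{eq:en-ext-bu} and \eqref{eq:en-annulus} follow from the triangle inequality with $r_0$ small, using $\widetilde E(u^*;0,r_0)\to0$.
\end{itemize}
Without this step your Steps 2 and 3 have no input, since excluding a bubble at scale $\sim\sqrt{T_+-t_n}$ relies on it.
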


\begin{proof}[Proof of Proposition~\ref{prop:finite-seq-dec}]

We follow the arguments by \cite{JL23b} and \cite{Ary25ra}. 
Considering a solution $u(t) \in \E$ to \eqref{eqn:GL} which blows up at time $T_+ \in(0,\infty)$. Using ~\eqref{eq:tension-L2} we can find a sequence $t_n \to T_+$ so that, 
\begin{align}
(T_+ - t_n)^{\frac{1}{2}} \| \calT( u(t_n)) \|_{L^2}  \to 0 \mas n \to \infty. 
\end{align} 
Now we apply Lemma~\ref{lem:compactness} with $\rho_n:= (T_+ - t_n)^{\frac{1}{2}}$ to obtain that there exist $N \ge 0$, $m_0\in \Z$, $\vec \te \in \bbT^N, \vec \lam_n \in (0, \infty)^N$ such that  
\begin{align} \label{eq:decomp-0} 
\lim_{n \to \infty} \Big( \| u(t_n) - \calW(\vec \te, \vec \lam_n) \|_{\E( r \le A (T_+- t_n)^{\frac{1}{2}})}^2 + \sum_{j =1}^{N-1} \Big( \frac{ \lam_{n, j}}{\lam_{n, j+1}} \Big)^{\frac{D-2}{2}} \Big) = 0
\end{align} 
up to a subsequence for each $A>0$, and $\lam_{n, N} \lesssim (T_+- t_n)^{\frac{1}{2}}$ if $N\geq1$. Consider a smooth cutoff function
\begin{align}
\phi &\equiv 1\text{ on } [r_1,r_2),\quad \phi \equiv 0 \text{ on }(0,r_1/2]\cap[2r_2,\infty),\quad 0\leq \phi\leq 1
\end{align} 
where the  parameters $r_1\in(0,\infty)$ and $r_2\in(0,\infty]$ will be chosen appropriately. Define the localized $\E$-norm 
\begin{align}
    \ti{\Theta}_{[r_1,r_2]}(t) = \int \ti{e}(u(t,r)) \phi(r)^2  r^{D-1} \ud  r,\quad \ti{\Theta}^*_{[r_1,r_2]} = \int \ti{e}(u^*(r)) \phi(r)^2 r^{D-1} \udr.
\end{align}
Using Lemma $\ref{lem:localized-energy}$ with $|\p_r \phi|\lesssim\frac{1}{r_1}$, we see that for each $0<s<\tau< T_+$ we have, 
\begin{align}
\label{eq:Theta-ineq} 
&\Big| \ti{\Theta}_{[r_1,r_2]}(\tau) - \ti{\Theta}_{[r_1,r_2]}(s) \Big| \\
&\lesssim  \int_{s}^{\tau} \|\partial_t u\|^2_{L^2}\ud t+ \left(\int_{s}^{\tau}\int_0^{\infty} |u|^{2p} \phi^2 \ud r \ud t\right)^{1/2}  \left(\int_{s}^{\tau}\int_0^{\infty} |\partial_t u|^2 \phi^2 \ud r \ud t\right)^{1/2}  \\
&\quad + \left(\int_{s}^{\tau}\int_0^{\infty} |\partial_r u|^2 \phi^2 (\partial_r \phi)^2 \ud r\ud t\right)^{1/2}   \left(\int_{s}^{\tau}\int_0^{\infty} |\partial_t u|^2 \right)^{1/2} \\
&\quad+ \left(\int_{s}^{\tau} \int_0^{\infty} \frac{|u|^2}{r^4} \phi^2\right)^{1/2}  \left(\int_{s}^{\tau} \int_0^{\infty}|\partial_t u|^2 \phi^2 \ud r\ud t\right)^{1/2}\\
&\lesssim  \int_s^{T_+} \| \p_t u(t) \|_{L^2}^2 \, \ud t  + \sqrt{\int_{s}^{\tau} \int_{r_1}^{r_2} \frac{1}{r^3}\ud r \ud t} \sqrt{\int_{s}^{\tau} \|\partial_t u\|^2_{L^2} \ud t} \\
&\quad + \frac{(\tau-s)^{1/2}}{r_1} \sqrt{\int_{s}^{\tau} \|\partial_t u\|^2_{L^2} \ud t}+\frac{(\tau-s)^{1/2}}{r_1} \sqrt{\int_{s}^{\tau} \|\partial_t u\|^2_{L^2}\ud t}\\
&\lesssim  \int_s^{T_+} \| \p_t u(t) \|_{L^2}^2 \, \ud t  + \frac{(T_+-s)^{1/2}}{r_1} \sqrt{\int_{s}^{T_+} \|\partial_t u\|^2_{L^2} \,\ud t}.
\end{align}
Let $s\to T_+$ we see that $\lim_{s\to T_+} \Tilde{\Theta}_{[r_1,r_2]}(s)$ exists. 

We first prove
\begin{align}\label{eqn:fixed-annulus-estimate}
\lim_{t\to T_+}\ti E(u(t);\alp(T_+-t)^{1/2},r_0) = \ti E(u^*;0,r_0)
\end{align}
for any $r_0 \in (0, +\infty].$ 

We decompose the difference into the following four terms and we will treat them separately.
\begin{align}
&\left| \ti{E}\left(u(s); \alpha(T_+ - s)^{\frac{1}{2}}, r_0\right) - \ti{E}\left(u^*; 0, r_0\right) \right| \\
&\quad \lesssim \left| \ti{\Theta}_{\left[\alpha(T_+ - s)^{\frac{1}{2}}, r_0\right]}(s) - \ti{\Theta}^*_{\left[\alpha(T_+ - s)^{\frac{1}{2}}, r_0\right]} \right| + \ti{E}\left(u(s); \frac{\alpha}{2}(T_+ - s)^{\frac{1}{2}}, \alpha(T_+ - s)^{\frac{1}{2}}\right) \\
&\quad\quad + \ti{E}\left(u^*; 0, \alpha(T_+ - s)^{\frac{1}{2}}\right) + \left| \int_{r_0}^{2r_0} \left( \ti{\mathbf{e}}(u(s)) - \ti{\mathbf{e}}(u^*) \right) \phi^2 r^{D-1} \mathrm{d}r \right|.
\end{align}

The third term tends to zero due to $u^*\in\dot{H}^1$ and Lebesgue convergence theorem, and the last term tends to zero by ~\eqref{eqn:con-of-radiation}. Then we only need to treat the first two terms.

Fix $0<s<T_+$, taking $r_1=\al (T_+-s)^{\frac{1}{2}}$ and $r_2=r_0$ in the estimate ~\eqref{eq:Theta-ineq} yields that for $s<\tau<T_+$,
\begin{align}
    \left| \ti{\Theta}_{\left[\alpha(T_+ - s)^{\frac{1}{2}}, r_0\right]}(s)-\ti{\Theta}_{\left[\alpha(T_+ - s)^{\frac{1}{2}}, r_0\right]}(\tau) \right|\lesssim \int_s^{T_+} \| \p_t u(t) \|_{L^2}^2 \, \ud t  + \frac{1}{\al} \sqrt{\int_{s}^{T_+} \|\partial_t u\|^2_{L^2} \,\ud t}.
\end{align}

Since the right-hand side above is independent of $\tau$ and 
\begin{align}
    \left|\ti{\Theta}_{\left[\alpha(T_+ - s)^{\frac{1}{2}}, r_0\right]}(\tau)-\ti{\Theta}_{\left[\alpha(T_+ - s)^{\frac{1}{2}}, r_0\right]}^*\right| \to 0
\end{align}
as $\tau\to T_+$, the first term tends to zero as $s\to T_+$. Using the same argument we obtain
\begin{align}
    \lim_{s\to T_+}\ti{\Theta}_{\left[\frac{\al}{2}(T_+ - s)^{\frac{1}{2}}, \al(T_+ - s)^{\frac{1}{2}}\right]}(s)=0,
\end{align}
which gives an estimate for the second term. Combining the arguments above we get ~\eqref{eqn:fixed-annulus-estimate}.

Next we prove ~\eqref{eq:radiation}. We only need to prove that
\begin{align}
    \lim_{t\to T_+}\ti E(u(t)-u^*;\al(T_+ - s)^{\frac{1}{2}},\infty)=0,\quad \forall \ \al>0.
\end{align}

We estimate as follows:
\begin{align}
\ti{E}\left(u(\tau) - u^*; \alpha(T_+ - \tau)^{\frac{1}{2}}, \infty\right) &\leq
2\ti{E}\left(u(\tau); \alpha(T_+ - \tau)^{\frac{1}{2}}, r_0\right) + 2\ti{E}\left(u^*; \alpha(T_+ - \tau)^{\frac{1}{2}}, r_0\right) \\&+ \ti{E}\left(u(\tau) - u^*; r_0, \infty\right).
\end{align}

For any $\veps>0$, we take $r_0>0$ sufficiently small so that $2\ti{E}\left(u^*; 0, r_0\right)+ \ti{E}\left(u(\tau) - u^*; r_0, \infty\right)\leq \frac{1}{3}\veps $. Then using ~\eqref{eqn:fixed-annulus-estimate} we can choose $0<\tau_0<T_+$ so that for any $\tau_0<\tau<T_+$, 
\begin{align}
    \ti E(u(t);\alp(T_+-t)^{1/2},r_0)\leq \ti{E}\left(u^*; 0, r_0\right)+\frac{1}{6}\veps\leq\frac{1}{3}\veps.
\end{align}
Therefore
\begin{align}
\ti{E}\left(u(\tau) - u^*; \alpha(T_+ - \tau)^{\frac{1}{2}}, \infty\right) &\leq
2\ti{E}\left(u(\tau); \alpha(T_+ - \tau)^{\frac{1}{2}}, r_0\right) + 2\ti{E}\left(u^*; 0, r_0\right) \\&+ \ti{E}\left(u(\tau) - u^*; r_0, \infty\right)\leq \veps,
\end{align}
which implies ~\eqref{eq:radiation}.

Now we prove ~\eqref{eq:en-ext-bu}. We have
\begin{align}
    E( u(t) - u^*; \al(T_+ - t)^{\frac{1}{2}}, \infty) &=\frac{1}{2}\int_{\al(T_+ - t)^{\frac{1}{2}}}^{\infty} |\p_r(u(t) - u^*)|^2 r^{D-1}\,\ud r\\&-\frac{D-2}{2D}\int_{\al(T_+ - t)^{\frac{1}{2}}}^{\infty} | u(t) - u^*|^{\frac{2D}{D-2}}r^{D-1}\,\ud r.
\end{align}
The first term tends to zero due to ~\eqref{eq:radiation}. For the second term, using radial Sobolev embedding we have that
\begin{align}
    \int_{\al(T_+ - t)^{\frac{1}{2}}}^{\infty} | u(t) - u^*|^{\frac{2D}{D-2}}r^{D-1}\,\ud r&=\int_{\al(T_+ - t)^{\frac{1}{2}}}^{\infty} r^2| u(t) - u^*|^{\frac{4}{D-2}}\frac{| u(t) - u^*|^2}{r^2}r^{D-1}\,\ud r\\ &\lesssim
    \|u(t)-u^*\|_{\E(\al(T_+ - t)^{\frac{1}{2}})}^{\frac{2D}{D-2}} \to 0 \quad \text{as}\quad t\to T_+,
\end{align}
which proves ~\eqref{eq:en-ext-bu}.

The proof of $\eqref{eq:en-annulus}$ follows directly from that of ~\eqref{eq:en-ext-bu}, so we omit the details.

Next we prove ~\eqref{eq:N-bubbles-bu}. $\eqref{eq:en-annulus}$ implies that the scaling parameter $\lam_{n,N}$ appearing in the sequential decomposition ~\eqref{eq:decomp-0} must satisfy $\lam_{n,N}\ll (T_+-t_n)^{\frac{1}{2}} $, therefore ~\eqref{eq:N-bubbles-bu} holds for a sequence of time. So it suffices to prove that $\lim \limits_{t\to T_+}E( u(t); 0, \al(T_+ - t)^{\frac{1}{2}})$ exists. Using ~\eqref{eqn:con-norm}, we have
\begin{align}
    L+\int_0^{\infty}|\p_r u^*|^2 r^{D-1}\,\ud r&=\lim_{t\to T_+}\int_0^{\infty}|\p_r u(t)|^2 r^{D-1}\,\ud r\\
    &=\lim_{t\to T_+}\int_0^{\al (T_+-t)^{\frac{1}{2}}}|\p_r u(t)|^2 r^{D-1}\,\ud r+\lim_{t\to T_+}\int_{\al (T_+-t)^{\frac{1}{2}}}^{\infty}|\p_r u(t)|^2 r^{D-1}\,\ud r\\
    &=\lim_{t\to T_+}\int_0^{\al (T_+-t)^{\frac{1}{2}}}|\p_r u(t)|^2 r^{D-1}\,\ud r+\lim_{t\to T_+}\int_{\al (T_+-t)^{\frac{1}{2}}}^{\infty}|\p_r u^*|^2 r^{D-1}\,\ud r\\
    &+\lim_{t\to T_+}\int_{\al (T_+-t)^{\frac{1}{2}}}^{\infty}|\p_r (u(t)-u^*)|^2 r^{D-1}\,\ud r\\
    &+O\left(\lim_{t\to T_+}\|u(t)-u^*\|_{\E(\al (T_+-t)^{\frac{1}{2}})}^2 \right)\\
    &=\lim_{t\to T_+}\int_0^{\al (T_+-t)^{\frac{1}{2}}}|\p_r u(t)|^2 r^{D-1}\,\ud r+\int_0^{\infty}|\p_r u^*|^2 r^{D-1}\,\ud r,
\end{align}
therefore $\lim \limits_{t\to T_+}\int_0^{\al (T_+-t)^{\frac{1}{2}}}|\p_r u(t)|^2 r^{D-1}\,\ud r$ exists and equals to $N\|W\|_{\dot{H}^1}^2$ due to the decomposition ~\eqref{eq:decomp-0}. Since $L>0$, we have $N\geq 1$. Using the similar argument we get $\lim \limits_{t\to T_+}\int_0^{\al (T_+-t)^{\frac{1}{2}}}|u(t)|^{\frac{2D}{D-2}} r^{D-1}\,\ud r$ exists and equals to  $N\|W\|_{L^{\frac{2D}{D-2}}}^{\frac{2D}{D-2}}$. The arguments above yields the existence of $\lim \limits_{t\to T_+}E( u(t); 0, \al(T_+ - t)^{\frac{1}{2}})$, thus proves ~\eqref{eq:N-bubbles-bu}. We also note that \eqref{eq:d(t_n)} is covered by the previous proof.

\eqref{eq:energy-limit}  follows from ~\eqref{eq:N-bubbles-bu}, $\eqref{eq:en-annulus}$ and ~\eqref{eq:en-ext-bu}.

\end{proof}

\subsection{The Sequential Decomposition for Global Solutions}

We establish sequential soliton resolution when $T_+=\infty$ in this section.

\begin{prop}[Sequential decomposition for a global-in-time solution]\label{prop:global-seq-dec}
 Let $u_0\in \calE$, and let $u(t)$ denote the solution to $\eqref{eqn:GL}$ with initial data $u_0$. Suppose that $T_+(u_0)=\infty$. Then, there exist a function $u^*\in \calE$ , an integer $N\geq 0$, a sequence of time $\{t_n\}_{n\geq 1}$ such that $t_n \to \infty$, a sequence of phases $\vec{\theta_n}=(\theta_{1,n},\dots, \theta_{N,n})\in \bbT^N$, a sequence of scales $\vec{\lam_n}=(\lam_{1,n},\dots, \lam_{N,n})\in (0,\infty)^N$, and an error $g_n$ defined by 
\[
u(t_n)=\sum_{j=1}^N e^{i\theta_{j,n}}W_{\lambda_{j,n}}+g_n
\]
with the following properties:
\begin{enumerate}[label={(\roman*).}]
    \item  The integer $N\geq 1$  satisfies,
    \begin{align} 
     \label{eq:global-energy-limit} 
\lim_{t \to \infty} E( u(t)) = N E(W)  ; 
\end{align}
    \item for any $\al>0 $ 
\begin{align} 
\label{eq:global-en-ext-bu}
\lim_{t\to \infty} E(u(t); \al (T_+ -t)^{\frac{1}{2}}, \infty) = 0,
\end{align}

and there exists $0< T_0 < \infty$ and function $\rho : [T_0, \infty) \to (0,\infty)$ satisfying, 
\begin{equation} \label{eq:global-radiation} 
\lim_{t \to \infty} \big((\rho(t) / \sqrt{t}) + \| u(t) -  u^*\|_{\cE(\rho(t))}\big) = 0; 
\end{equation}

    \item the error $g_n$ and the scales $\vec \lam_n$ satisfy, 
\begin{align} \label{eq:global-d(t_n)} 
\lim_{ n \to \infty} \Big( \| g_n \|_{\E}^2 + \sum_{j =1}^N \Big( \frac{\lam_{n, j}}{\lam_{n, j+1}} \Big)^{\frac{D-2}{2}}  \Big)^{\frac{1}{2}}  = 0 , 
\end{align}
where here we adopt the convention that $\lam_{n, N+1} := t_n^{\frac{1}{2}}$. 
    
\end{enumerate}

\end{prop}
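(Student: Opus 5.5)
We follow the proof of Proposition~\ref{prop:finite-seq-dec}, with the cutoff scale $(T_+-t)^{1/2}$ replaced by $t^{1/2}$ and $u^*=0$; this is also the route of \cite{JL23b}, \cite{Ary25ra}. The argument has three steps.

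\textbf{Step 1: a good sequence of times.} Since $\sup_t\|u(t)\|_{\E}<\infty$, \eqref{eq:tension-L2} gives $\int_0^\infty\|\p_tu\|_{L^2}^2\,\ud t<\infty$. The equation gives $\p_t u=z\calT(u)$ with $|z|=1$. Hence
\[
\int_{T}^{2T}\|\calT(u(t))\|_{L^2}^2\,\ud t\to0 .
\]
In each dyadic interval we can therefore pick $t_n\in[2^n,2^{n+1}]$ with
\[
t_n\|\calT(u(t_n))\|_{L^2}^2\lesssim \int_{2^n}^{2^{n+1}}\|\p_tu\|_{L^2}^2\,\ud t\to0 .
\]
We apply Lemma~\ref{lem:compactness} with $\rho_n=t_n^{1/2}$. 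Along a subsequence this gives $M=N$, phases $\vec\theta$ and scales $\vec\lam_n$ with $\lam_{n,N}\lesssim t_n^{1/2}$. It also gives $R_n\to\infty$ such that $u(t_n)$ is $o(1)$-close to $\calW(\vec\theta,\vec\lam_n)$ in $\E(r\le R_nt_n^{1/2})$, with the scale ratios tending to $0$.

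\textbf{Step 2: the exterior region $r\ge \alpha t^{1/2}$ carries no energy.} This step gives \eqref{eq:global-en-ext-bu}, \eqref{eq:global-radiation}, and the upgrade $\lam_{n,N}\ll t_n^{1/2}$. It is the step we expect to be the main obstacle, because without a finite blow-up time there is no $u^*$ to converge to.

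First we show that the exterior energy vanishes along $t_n$. We use Lemma~\ref{lem:localized-energy} with $\phi=1-\chi_{R}$, $R=\alpha t^{1/2}/2$. The error terms are bounded by $\int_{t/2}^{t}\|\p_tu\|_{L^2}^2$ times factors of the form $(t/2)^{1/2}/R\sim\alpha^{-1}$. These factors are uniformly bounded, so the localized $\E$-norm outside $\alpha t^{1/2}$ is almost constant on $[t/2,t]$ once $t$ is large. We then need smallness at one time in each such window. The trapping Lemma~\ref{lem:localized-coercivity} converts smallness of the nonlinear localized energy into smallness of $\|u\|_{\E(R)}$. The radial Sobolev bound \eqref{eqn:r-sobolev} handles the potential term, as in the proof of \eqref{eq:en-ext-bu}. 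To obtain smallness at one time, we first try the following.

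\emph{Exterior Pohozaev/virial identity.} Pair $\calT(u)$ with $\Lambda u\,(1-\chi_R)$. The quantity $t_n^{1/2}\|\calT(u(t_n))\|_{L^2}\to0$ controls the pairing $\la \calT(u)\mid r\p_r u(1-\chi_R)\ra$ on $r\lesssim t_n^{1/2}$, so the identity yields vanishing of the energy on annuli $\alpha t_n^{1/2}\le r\le At_n^{1/2}$. For the far region $r\ge At^{1/2}$, the exterior estimate \eqref{outer-ball-estimate} shows that the energy of $u(t)$ there is bounded by the energy of the data outside $At^{1/2}/2$ plus $o(1)$ as $t\to\infty$. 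This tends to zero since $u_0\in\E$. Combining the two regions gives vanishing along $t_n$; the almost-constancy on $[t/2,t]$ then extends it to all times. With vanishing at every $\alpha>0$, a diagonal argument produces $\rho(t)=o(t^{1/2})$ for \eqref{eq:global-radiation}.

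\textbf{Step 3: conclusion.} With exterior vanishing in hand, the decomposition of Step 1 holds in all of $\E$, and the last scale satisfies $\lam_{n,N}\ll t_n^{1/2}$ because the annulus carries no energy. This is \eqref{eq:global-d(t_n)}. The energy is monotone, so $\lim_{t\to\infty}E(u(t))$ exists. By the asymptotic orthogonality of decoupled bubbles together with $\|g_n\|_{\E}\to0$, this limit equals $NE(W)$, giving \eqref{eq:global-energy-limit}. If $N=0$, the decomposition simply says $u(t_n)\to0$ in $\E$. Any case distinction needed for $N\ge1$ versus $N=0$ would be handled as in \cite{Ary25ra}.
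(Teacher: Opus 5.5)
There is a genuine gap in Step 2, at the ``exterior Pohozaev/virial identity''. No single-time information of the type $t_n^{1/2}\|\calT(u(t_n))\|_{L^2}\to0$ can make the energy on the annulus $\alpha t_n^{1/2}\le r\le At_n^{1/2}$ vanish. Indeed, $u=W_{t_n^{1/2}}$ (or any multi-bubble whose last scale is comparable to $t_n^{1/2}$) has $\calT(u)=0$ and energy of order one there. Lemma~\ref{lem:compactness} already tells you that $u(t_n)$ is a multi-bubble on $r\le R_nt_n^{1/2}$. What is still missing is $\lam_{n,N}\ll t_n^{1/2}$, and that cannot be seen at a fixed time.

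The identity itself confirms this. For the critical power it has no bulk term: for $\psi$ compactly supported in $(0,\infty)$,
\[
\la\calT(u)\mid\psi\,\Lambda u\ra=-\int_0^\infty\Big(\tfrac12|\p_ru|^2+F(u)\Big)r\psi'\,r^{D-1}\ud r-\tfrac{D-2}{2}\,\Re\int_0^\infty\bar u\,\p_ru\,\psi'\,r^{D-1}\ud r .
\]
Here $\psi'$ must change sign, since $\int\psi'\,\ud r=0$, and the last term is indefinite. Since $W_\lam$ satisfies this identity exactly, no coercive bound on the annulus can come out of it. You cannot avoid compact support of $\psi$ either, because you only control $\|r\p_ru\|_{L^2}$ on $r\lesssim t_n^{1/2}$.

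What is needed is a dynamical argument, and it is the one the paper uses. Your far-region step, run from time $0$ as in \eqref{outer-ball-estimate}, does not give $o(1)$. With the cutoff at $At^{1/2}$ on $[0,t]$, the flux term is $\lesssim A^{-1}\big(\int_0^\infty\|\p_tu\|_{L^2}^2\big)^{1/2}=O(A^{-1})$.

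Instead, start late. Given $\veps>0$, pick $T_0$ with $\alpha^{-1}\big(\int_{T_0}^\infty\|\p_tu\|_{L^2}^2\big)^{1/2}\le\veps$. Then apply Lemma~\ref{lem:localized-energy} on $[T_0,T]$ with the time-independent cutoff $\phi_T=1-\chi(4r/\alpha\sqrt T)$. Three facts control the flux terms:
\begin{itemize}
\item $|\p_r\phi_T|\lesssim(\alpha\sqrt T)^{-1}$;
\item $T-T_0\le T$;
\item the $|u|^{2p}\phi^2$ and $|u|^2r^{-4}\phi^2$ terms on $r\ge\alpha\sqrt T/4$ are $\lesssim(\alpha\sqrt T)^{-2}$ pointwise in time, using \eqref{eqn:r-sobolev} for the first.
\end{itemize}
Together these give flux terms $\lesssim\alpha^{-1}\big(\frac{T-T_0}{T}\big)^{1/2}\big(\int_{T_0}^\infty\|\p_tu\|_{L^2}^2\big)^{1/2}\lesssim\veps$. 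Meanwhile $\|u(T_0)\|_{\E(\alpha\sqrt T/4)}\to0$ as $T\to\infty$, since $u(T_0)$ is a fixed element of $\E$. Hence $\ti E(u(T);\alpha\sqrt T,\infty)\lesssim\veps$ for all large $T$ and every $\alpha>0$. This needs no Pohozaev identity, no use of the good sequence, and no window-by-window extension.

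Mind the order of operations. Control $\ti E$ first and apply Lemma~\ref{lem:localized-coercivity} afterwards. That lemma assumes $\|v\|_{\E(R)}\le\delta$, so it cannot by itself turn smallness of $E$ into smallness of $\|u\|_{\E(R)}$. With this replacement, your Steps 1 and 3 coincide with the paper's proof.
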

\begin{proof}[Proof of Proposition~\ref{prop:global-seq-dec}]

    Let \( u(t) \in \mathcal{E} \) be a solution of $\eqref{eqn:GL}$ defined globally in time. By $\eqref{eq:tension-L2}$ we can find a sequence \( t_n \to \infty \) so that, \( t_n^{\frac{1}{2}} \|\mathcal{T}(u(t_n))\|_{L^2} \to 0 \) as \( n \to \infty \). We can now apply Lemma $\ref{lem:compactness}$ which yields \( N \geq 0 \), \( \vec{\te} \in \bbT^N \), \( \vec{\lambda}_n \in (0,\infty)^N \) such that after passing to a subsequence, we have
\begin{align}
\label{eqn:con-in-global}
\lim_{n \to \infty} \left( \|u(t_n) - \mathcal{W}(\vec{\te}, \vec{\lambda}_n)\|_{\mathcal{E}(r \leq A\sqrt{t_n})}^2 + \sum_{j=1}^{N-1} \left( \frac{\lambda_{n,j}}{\lambda_{n,j+1}} \right)^{\frac{D-2}{2}} \right) = 0 
\end{align}
for each \( A > 0 \), and moreover that \( \lambda_{n,N} \lesssim t_n^{\frac{1}{2}} \). Fix \( \alpha > 0 \) and let \( \varepsilon > 0 \) be small enough such that by $\eqref{eq:tension-L2}$ and the fact that \( \|u_0\|_{\mathcal{E}} < \infty \) we can find \( T_0 = T_0(\varepsilon) > 0 \) such that,
\begin{equation}
\label{eqn:control-tau-L2}
    \frac{4\|u_0\|_{\mathcal{E}}}{\alpha} \left( \int_{T_0}^{\infty} \int_0^\infty |\partial_t u(t, r)|^2 r^{D-1} \mathrm{d}r\mathrm{d}t \right)^{\frac{1}{2}} \leq \varepsilon. 
\end{equation}

Next, choose \( T \geq T_0 \) so that
\begin{equation}
\label{eq:small-out}
    \|u(T_0)\|_{\mathcal{E}(\alpha\sqrt{T}/4, \infty)} \leq \varepsilon.
\end{equation}

Fixing any such \( T \), we set
\[
\phi(t, r) = \phi_T(r) = 1 - \chi(4r/\alpha\sqrt{T}) \text{ for } t \in [T_0, T]
\]

where \( \chi(r) \) is a smooth function on \( (0, \infty) \) such that \( \chi(r) = 1 \) for \( r \leq 1 \), \( \chi(r) = 0 \) if \( r \geq 4 \), and \( |\chi'(r)| \leq 1 \) for all \( r \in (0, \infty) \). Since \( \frac{\mathrm{d}}{\mathrm{d}t}\phi(t, r) = 0 \) for \( t \in [T_0, T] \) it follows from Lemma $\ref{lem:localized-energy}$ that
\[
\begin{aligned}
&\int_0^\infty \tilde{\mathbf{e}}(u(T))\phi^2 \ud r - \int_0^\infty \tilde{\mathbf{e}}(u(T_0))\phi^2 \ud r\\
&\lesssim -2  \int_{T_0}^T \int_0^\infty |\partial_t u|^2\phi^2 \ud r \ud t+ 2 \left(  \int_{T_0}^T \int_0^\infty |u|^{2p}\phi^2 \ud r \ud t \right)^{1/2} \left(  \int_{T_0}^T \int_0^\infty |\partial_t u|^2\phi^2 \ud r\ud t \right)^{1/2} \\
&\quad + 4 \left(  \int_{T_0}^T \int_0^\infty  |\partial_t u|^2\phi^2(\partial_r \phi)^2 \ud r \ud t\right)^{1/2} \left(  \int_{T_0}^T \int_0^\infty |\partial_r u|^2 \ud r \ud t \right)^{1/2} 
\\
&\quad
+ 2 \left( \int_{T_0}^T \int_0^\infty \frac{|u|^2}{r^4}\phi^2 \ud r \ud t \right)^{1/2} \left( \int_{T_0}^T \int_0^\infty |\partial_t u|^2\phi^2 \ud r \ud t \right)^{1/2} \\
&\lesssim \left( \int_{T_0}^T \int_{\frac{\alpha\sqrt{T}}{4}}^\infty \frac{1}{r^3}\mathrm{d}r\mathrm{d}t \right)^{1/2} \left( \int_{T_0}^T \int_0^\infty |\partial_t u|^2 \ud r \ud t\right)^{1/2} + \frac{1}{\al}\left( \frac{T - T_0}{T} \right)^{1/2} \left( \int_{T_0}^T \int_0^\infty  |\partial_t u|^2 \ud r \ud t\right)^{1/2} \\
&\lesssim \frac{1}{\al}\left( \frac{T - T_0}{T} \right)^{1/2}\left( \int_{T_0}^\infty \int_0^\infty |\partial_t u|^2 \right)^{1/2} 
\end{aligned}
\]
where the constant in the above inequality depends on \( \sup_{t \in [0, \infty)} \|u(t)\|_{\mathcal{E}} \) .  Using the above together with $\eqref{eqn:control-tau-L2}$ and $\eqref{eq:small-out}$, we find that the Dirichlet energy can be made arbitrarily small
\[
\ti{E}(u(T); \alpha\sqrt{T}, \infty) \lesssim \varepsilon
\]
for all sufficiently large $T$ . Thus by the localized coercivity lemma $\ref{lem:localized-coercivity}$ we see that \( E(u(T); \alpha\sqrt{T}, \infty) \geq 0 \) and moreover since
\[
E(u(T); \alpha\sqrt{T}, \infty) \lesssim \ti{E}(u(T); \alpha\sqrt{T}, \infty) \leq \varepsilon
\]
we get $\eqref{eq:global-en-ext-bu}$. Since we proved $\eqref{eq:global-en-ext-bu}$ for any \( \alpha > 0 \) there exists a curve \( \rho \) such that $\eqref{eq:global-radiation}$ holds. Returning to the sequential decomposition we see from $\eqref{eqn:con-in-global}$, the fact that \( \lambda_{n,N} \lesssim t_n^{\frac{1}{2}} \), and from $\eqref{eq:global-en-ext-bu}$ that we must have
\[
\lim_{n \to \infty} \frac{\lambda_{n,N}}{t_n^{\frac{1}{2}}} = 0.
\]
Then, $\eqref{eq:global-d(t_n)}$ follows from the above, $\eqref{eq:global-radiation}$  and $\eqref{eqn:con-in-global}$. Moreover, we see that \( \lim_{n \to \infty} E(u(t_n)) = N E(W) \) and the continuous limit $\eqref{eq:global-energy-limit}$ then follows from the fact that \( E(u(t)) \) is non-increasing.

\end{proof}

\section{Decomposition of the Solution and Collision Intervals}
\label{sec:4}
For the remainder of the paper we fix a solution $u(t)\in \calE$ of $\eqref{eqn:GL}$, defined on the time interval $I_+=[0,T_+)$, where $T_+<\infty$ in the finite time blow-up case and $T_+=\infty$ in the global case. Let $u^*$ be the body map defined in Proposition $\ref{body-map}$ and in case of a global solution we adopt the convention that $u^*=0$. By  Proposition $\ref{prop:finite-seq-dec}$ and $\ref{prop:global-seq-dec}$ there exists an integer $N\geq 0$ and a sequences of times $t_n\to T_+$ such that $u(t_n)-u^*$ approaches an $N$- bubble as $n\to \infty$.

We recall the definition of the localized distance function.

\begin{defn}(Proximity to a multi-bubble)
    For all \( t \in I_+ \), \( \rho \in (0, \infty) \), and \( K \in \{0, 1, \ldots, N\} \), we define the localized multi-bubble proximity function as
\[
\mathbf{d}_K(t; \rho) := \inf_{\vec{\theta}, \vec{\lambda}} \left( \|u(t) - u^* - \mathcal{W}(\vec{\theta}, \vec{\lambda})\|_{\mathcal{E}(\rho)}^2 + \sum_{j=K}^N \left( \frac{\lambda_j}{\lambda_{j+1}} \right)^{\frac{D-2}{2}} \right)^{\frac{1}{2}},
\]
where $u^*=0$ when $T_+=\infty$, \( \vec{\te} := (\te_{K+1}, \ldots, \te_N) \in \bbT^{N-K} \), \( \vec{\lambda} := (\lambda_{K+1}, \ldots, \lambda_N) \in (0, \infty)^{N-K} \), \( \lambda_K := \rho \) and \( \lambda_{N+1} := \sqrt{T_+ - t} \) when \( T_+ < \infty \) and \( \lambda_{N+1} := \sqrt{t} \) when \( T_+ = \infty \). 

The multi-bubble proximity function is defined by \( \mathbf{d}(t) := \mathbf{d}_0(t; 0) \).
\end{defn}
 
The consequence of Proposition $\ref{prop:finite-seq-dec}$ and $\ref{prop:global-seq-dec}$  can be rephrased as follows in this notation: there exists a monotone sequence $t_n\to T_+$ such that 

\begin{equation}
    \label{resolution-sequence}
    \lim_{n\to \infty} \bfd(t_n)=0.
\end{equation}

\subsection{Collision Intervals}
\label{sec:4.1}

To prove Theorem $\ref{thm:main}$ we only need to show that

\begin{equation}
\label{eq:resolution-continuous}
\lim_{t \to T_+} \bfd(t) = 0.
\end{equation}

The approach which we adopt in order to prove~\eqref{eq:resolution-continuous} is to study colliding bubbles.

A collision interval is defined as follows.

\begin{defn}[Collision interval]
\label{def:collision}
Let $K \in \{0, 1, \ldots, N\}$. A compact time interval $[a, b] \subset I_+$ is a \emph{collision interval}
with parameters $0 < \varepsilon < \eta$ and $N - K$ exterior bubbles if
\begin{itemize}
\item $\bfd(a) \leq \varepsilon$ and $\bfd(b) \ge \eta$,
\item there exists a function $\rho_K: [a, b] \to (0, \infty)$ such that $\bfd_K(t; \rho_K(t)) \leq \varepsilon$
for all $t \in [a, b]$.
\end{itemize}
In this case, we write $[a, b] \in \calC_K(\varepsilon, \eta)$.

\end{defn}
\begin{defn}[Choice of $K$]
\label{def:K-choice}
We define $K$ as the \emph{smallest} nonnegative integer having the following property.
There exist $\eta > 0$, a decreasing sequence $\varepsilon_n \to 0$,
and sequences $(a_n), (b_n)$ such that $[a_n, b_n] \in \calC_K(\varepsilon_n, \eta)$ for all $n \in \{1, 2, \ldots\}$.
\end{defn}
\begin{lem}[Existence of $K \ge 1$]
\label{lem:K-exist}
If \eqref{eq:resolution-continuous} is false, then $K$ is well defined and $K \in \{1, \ldots, N\}$.

\end{lem}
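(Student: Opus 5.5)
We argue as in the corresponding lemma of \cite{JL23b}, in two steps: first we show that $K=N$ always has the required property, so $K$ is well defined and $K\le N$; then we rule out $K=0$.

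\textbf{Step 1: $K=N$ qualifies.} Suppose \eqref{eq:resolution-continuous} fails. Then there exist $\eta>0$ and a sequence $s_n\to T_+$ with $\bfd(s_n)\ge \eta$. Using \eqref{resolution-sequence}, we pick times $a_n<s_n$ with $a_n\to T_+$ and $\bfd(a_n)\le\veps_n\to 0$. We pass to a subsequence so that the intervals $[a_n,b_n]$, with $b_n:=s_n$, are pairwise disjoint and the $\veps_n$ decrease. For $K=N$ no bubbles remain in $\bfd_N(t;\rho)$, since the sum runs over $j=N$ only, with $\lambda_N=\rho$. Thus
\[
\bfd_N(t;\rho)^2=\|u(t)-u^*\|_{\calE(\rho)}^2+\Big(\frac{\rho}{\lambda_{N+1}(t)}\Big)^{\frac{D-2}{2}}.
\]
By \eqref{eq:radiation}, respectively \eqref{eq:global-radiation}, there is a curve $\rho(t)$ with $\rho(t)/\lambda_{N+1}(t)\to0$ and $\|u(t)-u^*\|_{\calE(\rho(t))}\to 0$. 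Hence $\bfd_N(t;\rho(t))\to 0$ as $t\to T_+$. We then enlarge $\veps_n$ to
\[
\max\Big(\veps_n,\ \sup_{t\ge a_n}\bfd_N(t;\rho(t))\Big),
\]
which still tends to $0$, and take $\rho_N:=\rho$. With these choices $[a_n,b_n]\in\calC_N(\veps_n,\eta)$, so the set of admissible $K$ is nonempty and its minimum $K\le N$ exists.

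\textbf{Step 2: $K\neq 0$.} Suppose $K=0$. Then for each $n$ there is $\rho_0(t)$ with $\bfd_0(t;\rho_0(t))\le\veps_n$ on $[a_n,b_n]$. By definition, $\bfd_0(t;\rho)$ involves $N$ bubbles at scales $\lambda_1,\dots,\lambda_N$ together with the additional ratio term $(\lambda_1/\rho)^{\frac{D-2}{2}}$, where $\lambda_0=\rho$. It does not, however, control $u$ on $(0,\rho)$. The key observation is that in $\bfd_0$ we have $\lambda_0=\rho$ while the bubbles $W_{\lambda_j}$ have $\lambda_j\gg\rho$. To pass from $\bfd_0(t;\rho_0(t))\le\veps_n$ to $\bfd(t)\lesssim \veps_n$, we must show that the energy of $u(t)$ inside $r\le\rho_0(t)$ is small. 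We will do this by energy accounting. The energy on $r\ge\rho_0$ is close to $NE(W)+E(u^*)$; in the global case $u^*=0$, and in the blow-up case $u^*$ contributes negligibly near $0$ because $\rho_0\ll\lambda_1\le\sqrt{T_+-t}\to0$. Since $E(u(t))\to NE(W)+E(u^*)$ by \eqref{eq:energy-limit} and \eqref{eq:global-energy-limit}, the nonlinear energy of $u(t)$ on $(0,\rho_0(t))$ is $o(1)$. Moreover, $\|u(t)\|_{\calE(\rho_0/2,\rho_0)}$ is small, because the bubbles are small there (as $\rho_0\ll\lambda_1$) and $u^*$ is small near $0$. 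We would like to upgrade smallness of $E(u;0,\rho_0)$ to smallness of $\ti E(u;0,\rho_0)$. Applying Lemma~\ref{lem:localized-coercivity} to the truncation $\chi_{\rho_0}u$, or Lemma~\ref{lem:basic-trapping} after the truncation, would give this provided we know a priori that $\|\chi_{\rho_0} u\|_{\calE}$ is small.

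\textbf{Main obstacle.} Obtaining that a priori smallness is where the difficulty lies. A small-energy interior region could a priori hide a bubble of nonpositive net energy contribution, and energy alone cannot exclude this. We therefore plan a continuity argument in time instead. At $t=a_n$ we have $\bfd(a_n)\le\veps_n$, so the interior $\calE$ norm is small. Suppose the interior norm on $(0,\rho_0(t))$ first reaches a threshold $\delta$ at some time $t$. At that moment the norm is at most $\delta$, so the trapping of Lemma~\ref{lem:basic-trapping} applies to $\chi_{\rho_0}u$ and gives $\ti E\lesssim E+o(1)=o(1)$, contradicting that the norm equals $\delta$. Thus the interior norm stays small on all of $[a_n,b_n]$, which gives $\bfd(t)\to0$ uniformly there. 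This contradicts $\bfd(b_n)\ge\eta$, so $K\ge1$.

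The remaining technical points are the continuity of $t\mapsto \|\chi_{\rho_0(t)}u(t)\|_{\calE}$ and the dependence of the cut-off on $\rho_0(t)$. We would handle both by replacing $\rho_0$ with a continuous comparable curve, for instance by piecewise constant choices combined with the propagation lemma for small localized $\calE$ norm over short times.
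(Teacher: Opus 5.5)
\textbf{Step 1 is fine.} The radiation curve from \eqref{eq:radiation} or \eqref{eq:global-radiation} gives $\bfd_N(t;\rho(t))\to0$, so $K\le N$. Your energy accounting in Step 2 is also the right ingredient.

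\textbf{The gap is in how you get the a priori smallness needed for Lemma~\ref{lem:basic-trapping}.} Your bootstrap runs on $f(t):=\|\chi_{\rho_0(t)}u(t)\|_{\calE}$. But $\rho_0$ is an arbitrary function supplied by Definition~\ref{def:collision}, so $f$ need not be continuous, and a ``first time $f$ reaches $\delta$'' need not exist. The coercivity step only shows that $f$ never takes values in $(o(1),\delta]$, which gives nothing without continuity.

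Your proposed repair is not carried out and is not obviously available. Enlarging $\rho_0$ by a bounded factor is harmless. However, a continuous $\tilde\rho\ge\rho_0$ with $\tilde\rho\ll\lam_1$ requires continuous-in-time control of the innermost exterior scale. You have not established that, and the propagation lemma for small localized norm does not supply it.

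Two smaller steps are also unjustified as written:
\begin{itemize}
\item $\bfd_0(t;\rho_0)\le\veps_n$ gives no information on $(\rho_0/2,\rho_0)$. What you actually need, and do have, is smallness on $(\rho_0,2\rho_0)$, where $\chi_{\rho_0}$ transitions.
\item $\bfd(a_n)\le\veps_n$ yields a small interior norm only after you rule out a bubble of the global configuration at scale $\lesssim\rho_0(a_n)$.
\end{itemize}

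\textbf{The fix is to work with the continuous quantity $\bfd(t)$, which removes the need for any bootstrap in $\rho_0$.} By Remark~\ref{rem:collision} you may shrink $\eta$ and assume $\bfd(b_n)=\eta$. Work at the single time $b_n$ and set $\rho:=\rho_0(b_n)$. You then have two configurations:
\begin{itemize}
\item a global one $\calW(\vec\sigma,\vec\mu)$ with $\|u-u^*-\calW(\vec\sigma,\vec\mu)\|_{\calE}\lesssim\eta$ and ratio sum $\lesssim\eta^2$;
\item an exterior one $\calW(\vec\theta,\vec\lam)$ that is $\veps_n$-close on $(\rho,\infty)$, with $\rho\ll\lam_1$.
\end{itemize}
Comparing the two on $(\rho,\infty)$, and using $\|\calW(\vec\theta,\vec\lam)\|_{\calE(\rho)}^2=N\|W\|_{\calE}^2+o(1)$ and $\|\calW(\vec\sigma,\vec\mu)\|_{\calE}^2=N\|W\|_{\calE}^2+O(\eta^2)$, gives
\[
\|\calW(\vec\sigma,\vec\mu)\|_{\calE(0,\rho)}^2=\|\calW(\vec\sigma,\vec\mu)\|_{\calE}^2-\|\calW(\vec\sigma,\vec\mu)\|_{\calE(\rho)}^2\lesssim \eta+o(1).
\]
Hence $\|\chi_\rho u(b_n)\|_{\calE}\lesssim\eta^{1/2}+o(1)$, which is exactly the a priori bound needed to apply Lemma~\ref{lem:basic-trapping}.

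Your energy accounting then gives $E(\chi_\rho u(b_n))=o(1)$. Coercivity yields $\|u(b_n)\|_{\calE(0,\rho)}\to0$. Extending $\calW(\vec\theta,\vec\lam)$ to all of $(0,\infty)$ then shows $\bfd(b_n)\to0$, contradicting $\bfd(b_n)=\eta$. This single-time argument replaces your entire ``Main obstacle'' paragraph.
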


\begin{rem} 
The fact that $K \ge 1$ means that at least one bubble must lose its shape if~\eqref{eq:resolution-continuous} is false.
\end{rem} 

\begin{proof}
    The proof is similar to the proof of Lemma 5.6 in \cite{JL23a} and we skip it.
\end{proof}

\begin{rem} \label{rem:collision} 
For each collision interval  we may assume without loss of generality that $\bfd(a_n)  = \veps_n$,  $\bfd(b_n) = \eta$, and $\bfd(t) \in [\veps_n, \eta]$ for each $t \in [a_n, b_n]$. Indeed, given some initial choice of $[a_n, b_n] \in \calC_K( \veps_n, \eta)$, 
just set $a_n \le \ti a_n := \sup\{ t \in [a_n,  b_n] \mid \bfd(t) \le \veps_n \}$ and $\ti b_n := \inf\{t \in [\ti a_n, b_n] \mid \bfd(t) \ge \eta\}$. 

Similarly, given some initial choice $\veps_n \to 0, \eta>0$ and intervals $[a_n, b_n] \in \calC_K( \eta, \veps_n)$ we are free to ``enlarge'' $\veps_n$ or ``shrink'' $\eta>0$, by choosing some other sequence $\veps_n \le \ti \veps_n  \to 0$, and $0< \ti \eta \le \eta$, and new collision subintervals $[\ti a_n, \ti b_n]  \subset [a_n, b_n] \cap \calC_{K}(\ti \eta, \ti \veps_n)$ as in the previous paragraph. We will enlarge our initial choice of $\veps_n$ and shrink $\eta$ in this fashion  over the course of the proof. 
\end{rem}

\subsection{Decomposition of the Solution}
\begin{lem}[Basic modulation]\label{lem:basic-mod}
Let $K \ge 1$ be the number given by Lemma~\ref{lem:K-exist}. There exist $\eta>0$, a sequence $\veps_n \to 0$,  and sequences $a_n, b_n \to \infty$ satisfying the requirements of Definition~\ref{def:K-choice}, and such that $\bfd(a_n)  = \veps_n$, $\bfd(b_n) = \eta$ and $\bfd(t) \in [\veps_n, \eta]$ for all $t \in [a_n, b_n]$ and so that the following properties hold. There exist phases $\vec \theta \in C^1(\cup_{n \in \N} [a_n, b_n];  \bbT^{N})$,  a function $\vec \lambda = ( \lambda_1, \dots, \lam_N) \in C^1(\cup_{n \in \N} [a_n, b_n];  (0, \infty)^{N})$, sequences $\al_n \to 0$ and $\nu_n \to 0$,  such that defining the functions, 
\begin{align} \label{eq:nu-def} 
\nu:\cup_{n \in \N} [a_n, b_n] \to (0, \infty), \quad  \nu(t):= \nu_n \lam_{K+1}(t), \mfor \, \, t\in[a_n,b_n],
\end{align}
\begin{align}
\al : \cup_{n \in \N} [a_n, b_n] \to (0, \infty), \quad  \al(t):= \begin{cases}  \al_n\sqrt{T_+ - t_n}\mif T_+<\infty \\ \al_n \sqrt{t} \mif T_+ = \infty \end{cases}, \mfor \, \, t\in[a_n,b_n], 
\end{align}
\begin{align} 
\label{eq:u^*(t)-def} 
u^*(t) :=  \begin{cases} (1 - \chi_{\al(t)})  u(t) \mif T_+<\infty \\ 0 \mif T_+= \infty \end{cases}  
\end{align}
and 
\begin{align}
\label{eq:g-def-1}
g: \cup_{n \in \N} [a_n, b_n] \to  \E; \quad g(t) := u(t) - u^*(t) - \calW(\vec \theta(t), \vec \lambda(t)), 
\end{align}
there hold,
\begin{itemize} 
\item[(i)] the orthogonality conditions, 
\begin{align}
\label{eq:g-ortho} 
0  = \La i e^{i\theta_j} \calZ_{1,\lambda_j}   \mid g\Ra=\La e^{i\theta_j} \calZ_{2,\lambda_j}    \mid g\Ra  \  , \quad \forall j = 1, \dots, M, , t \in [a_n, b_n], \quad \forall n; 
\end{align}
\item[(ii)] and the estimates, 
\begin{align}
\label{eq:nu-estimates} 
\lim_{n \to \infty} \sup_{t \in [a_n, b_n]} \Big(  \frac{\nu(t)}{ \lam_{K+1}(t)}  + \sum_{j=K+1}^{N-1}  \frac{\lam_{j}(t)}{\lam_{j+1}(t)}  + \frac{\lam_{N}(t)}{\al(t)}+ E( u(t); \frac{1}{4}{\nu(t)},  4 \nu(t))  \Big) = 0,
\end{align}
\begin{align}
\label{eq:d-g-lam} 
C_0^{-1}\bfd(t)  \leq \| g(t) \|_{\cE} +  \sum_{j=1}^{N-1} \Big( \frac{ \lam_{j}(t)}{\lam_{j+1}(t)} \Big)^{\frac{D-2}{4}} 
\leq C_0\bfd(t), 
\end{align} 

\begin{align}
\label{eq:lambda'-bound} 
\abs{\lam_j'(t)} \le C_0 \frac{1}{\lam_j(t)}  \bfd(t), 
\end{align} 
for all $t \in [a_n, b_n]$ and all $n \in \N$; 
\item[(iii)] for any sequence $s_n \in [a_n, b_n]$ and any sequence $R_n$ such that $\nu(s_n) \le R_n \ll \lam_{K+1}(s_n)$  if $K < N$ and $\nu(s_n) \le R_n \le \al(s_n)$  if $K=N$, then, 
\begin{align}
\label{eq:N-K-bubbles} 
\lim_{n \to \infty} E( u(s_n); R_n, \infty) = (N-K) E(Q) + E( u^*). 
\end{align} 
and, 
\begin{align}
\label{eq:N-K-converge} 
\lim_{n \to \infty}  \Big( \| u(s_n) - u^*(s_n) - \calW(\theta_{K+1}, \dots, \theta_N, &\lam_{K+1}(s_n), \dots, \lam_{N}(s_n)) \|_{\E( r \ge R_n)} \\
& + \sum_{j=K+1}^N \Big( \frac{\lam_{j}(s_n)}{\lam_{j+1}(s_n)} \Big)^{\frac{D-2}{4}} \Big) = 0. 
\end{align} 
\end{itemize} 
\end{lem}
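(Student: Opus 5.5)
The plan is to follow the proof of the analogous basic modulation lemma in \cite{JL23b} (Lemma 5.9 there), adapted as in \cite{Ary25ra}, with phases carried along. First fix $K\ge1$, $\eta>0$, $\veps_n\to0$ and $[a_n,b_n]\in\calC_K(\veps_n,\eta)$ from Lemma~\ref{lem:K-exist}. By Remark~\ref{rem:collision}, we may assume $\bfd(a_n)=\veps_n$, $\bfd(b_n)=\eta$ and $\bfd(t)\in[\veps_n,\eta]$ on $[a_n,b_n]$. Shrinking $\eta$ below the threshold of Lemma~\ref{lem:static-mod} (with $M=N$), we will apply the static modulation lemma to $v=u(t)-u^*(t)$ at each $t$. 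The energy hypothesis $E(v)\le NE(W)+\te^2$ follows from \eqref{eq:energy-limit}/\eqref{eq:global-energy-limit} and monotonicity of $E(u(t))$, once we check that $E(u(t))-E(u^*(t))\to NE(W)$ uniformly in $t\in[a_n,b_n]$. In the blow-up case, this uses \eqref{eq:radiation} to replace $u^*$ by $(1-\chi_{\al(t)})u(t)$ at cost $o(1)$ in $\calE$, provided $\al_n\to0$ slowly enough that $\rho(t)\ll\al(t)$. This yields $\vec\theta(t),\vec\lam(t),g(t)$ with \eqref{eq:g-ortho} and \eqref{eq:d-g-lam}, the latter from \eqref{eq:g-bound-0}, noting that $\bfd(t)$ and $\bfd_N(u(t)-u^*(t))$ agree up to the last-scale term and $o(1)$. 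Uniqueness in Lemma~\ref{lem:static-mod} together with the implicit function theorem gives continuity. We obtain $C^1$ regularity by differentiating the orthogonality conditions in time.

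For \eqref{eq:lambda'-bound}, differentiate \eqref{eq:g-ortho}. Write $\p_t g=\p_tu-\p_tu^*(t)-\sum_j(\theta_j' ie^{i\theta_j}W_{\lam_j}-\lam_j'\lam_j^{-1}e^{i\theta_j}\Lambda W_{\lam_j})$ and use $\p_tu=z(\Delta u+f(u))$. Expanding around $\calW$, the linear term $\calL_{\calW}g$ pairs with $\calZ_{k,\lam_j}$ to give $O(\lam_j^{-2}\|g\|_\calE)$ after integrating by parts, since $\calZ_k$ is compactly supported. The interaction terms $f(\calW)-\sum f(e^{i\theta_j}W_{\lam_j})$ contribute $O(\lam_j^{-2}(\lam_j/\lam_{j\pm1})^{(D-2)/4})$. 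The matrix formed by $\la i\calZ_1\mid iW\ra$, $\la\calZ_1\mid\Lambda W\ra$, $\la\calZ_2\mid W\ra$, $\la\calZ_2\mid \Lambda W\ra$ is, by \eqref{aux}, nearly diagonal in $(\theta',\lam')$ after using $\la\calZ_2\mid i\cdot\ra$ type vanishing for real functions. Inverting it gives $|\lam_j'|+\lam_j|\theta_j'|\lesssim\lam_j^{-1}\bfd(t)$. The time derivative of the cutoff part $u^*(t)$ is supported near $r\sim\al(t)\gg\lam_j$ and does not see $\calZ_{k,\lam_j}$.

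For (ii) and (iii), use the defining curve $\rho_K(t)$ with $\bfd_K(t;\rho_K(t))\le\veps_n$. Lemma~\ref{lem:bub-config} identifies the exterior bubbles in that infimum with $\lam_{K+1},\dots,\lam_N$ up to $o(1)$ ratios, and forces $\rho_K(t)\ll\lam_{K+1}(t)$, $\lam_N\ll\al$ and $\lam_j\ll\lam_{j+1}$ for $j>K$. We then choose $\nu_n\to0$ slowly so that $\rho_K(t)\le\nu(t)/4$, and diagonalise $\al_n$; \eqref{eq:nu-estimates} follows. In particular, the annulus energy $E(u;\nu/4,4\nu)$ vanishes, because on $[\rho_K,\infty)$ the function $u-u^*$ is $o(1)$-close to exterior bubbles which carry negligible energy there. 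Then \eqref{eq:N-K-converge} is immediate for $R_n\ge\rho_K(s_n)$. For $\nu\le R_n$, this holds since $\nu\ge\rho_K$. Finally, \eqref{eq:N-K-bubbles} follows by computing the energy of the exterior bubbles plus $u^*$, using asymptotic orthogonality and the radial Sobolev bound \eqref{eqn:r-sobolev} for cross terms.

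The main obstacle is the coupled choice of the sequences $\nu_n$ and $\al_n$, uniform over $t\in[a_n,b_n]$, so that simultaneously $\rho_K\ll\nu\ll\lam_{K+1}$ and $\lam_N\ll\al\ll\sqrt{T_+-t}$ with $\rho(t)\ll\al$. Making the replacement of $u^*$ by $u^*(t)$ compatible with $\bfd$ needs the uniform control \eqref{eq:radiation}. I would handle it by defining, for each $n$, the worst ratios over $[a_n,b_n]$ and taking geometric means.
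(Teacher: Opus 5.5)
Your derivation of \eqref{eq:lambda'-bound} follows the paper's argument: differentiate \eqref{eq:g-ortho}, pair the equation for $\p_t g$ with $\calZ_{k,\lam_j}$, and invert a nearly diagonal system. Your observation that $\chi_{\al(t)}\equiv 1$ on the support of $\calZ_{k,\lam_j}$, so that the cutoff terms vanish identically, is a clean way to dispose of the paper's $\calR(u,\al)$. The paper does not prove the remaining items; it takes them from Step~1 of Lemma~5.12 in \cite{JL23b}. Your sketch of them has a genuine gap at the step where Lemma~\ref{lem:bub-config} is said to identify the exterior bubbles with $\lam_{K+1},\dots,\lam_N$ ``up to $o(1)$ ratios.'' On $[a_n,b_n]$ you only know $\bfd(t)\le\eta$, so $\|g(t)\|_{\calE}$ is only $O(\eta)$, not $o(1)$. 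Compare the modulated configuration with the exterior one $\calW(\vec\sigma,\vec\mu)$ coming from $\bfd_K(t;\rho_K(t))\le\veps_n$. Lemma~\ref{lem:bub-config}, even in a localized form, then only yields $|\lam_j/\mu_j-1|+|\te_j-\sigma_j|\le\delta(\eta)$ for $j>K$. With that alone, $\|u(s_n)-u^*(s_n)-\calW(\te_{K+1},\dots,\te_N,\lam_{K+1}(s_n),\dots,\lam_N(s_n))\|_{\calE(r\ge R_n)}$ is merely $O(\delta(\eta))$, so \eqref{eq:N-K-converge} does not follow. That is exactly what Lemma~\ref{lem:collision-duration} uses. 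The estimates \eqref{eq:nu-estimates} and the energy statement \eqref{eq:N-K-bubbles} do survive, since they can be read off from $(\vec\sigma,\vec\mu)$ and only need the scales up to a factor $2$.

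The missing ingredient is the orthogonality conditions for $j\ge K+1$. The support of $\calZ_{k,\lam_j}$ lies in $[c\lam_j,C\lam_j]\subset(\rho_K(t),\al(t))$. There $g=\big(u-u^*-\calW(\vec\sigma,\vec\mu)\big)+u^*+\big(e^{i\sigma_j}W_{\mu_j}-e^{i\te_j}W_{\lam_j}\big)+{}$(tails of the other bubbles). The first two terms are $o(1)$. The tails are $o(1)$ once you also know $\lam_K\ll\lam_{K+1}$, which holds because an interior bubble at a scale $\gg\rho_K$ would put $N-K+1$ bubbles in the region $r\ge\rho_K$. Hence \eqref{eq:g-ortho} gives $\la ie^{i\te_j}\calZ_{1,\lam_j}\mid e^{i\sigma_j}W_{\mu_j}-e^{i\te_j}W_{\lam_j}\ra=o(1)$, and likewise with $e^{i\te_j}\calZ_{2,\lam_j}$. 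This map has nondegenerate Jacobian at $(\sigma_j,\mu_j)=(\te_j,\lam_j)$. So the a priori $\delta(\eta)$-closeness upgrades to $\lam_j/\mu_j\to1$ and $\te_j-\sigma_j\to0$ uniformly on $[a_n,b_n]$.

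There are also two smaller points. First, $\bfd(t)$ and $\bfd_N(u(t)-u^*(t))$ differ by $o_n(1)$. To get \eqref{eq:d-g-lam} with no additive error, you must absorb this using $\bfd(t)\ge\veps_n$ after enlarging $\veps_n$, as the paper does at the end of its proof. Second, in your matrix the same-$j$ cross entries vanish identically, because $\calZ_k$, $W$, $\Lam W$ are all real. The diagonal entries are $\la\calZ_1\mid W\ra$ and $\la\calZ_2\mid\Lam W\ra$, so what you need is $\la\calZ_2\mid\Lam W\ra\neq0$. The pairings $\la\calZ_1\mid\Lam W\ra$ and $\la\calZ_2\mid W\ra$ that \eqref{aux} controls never enter; the paper has the same slip.
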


\begin{rem}
    We will use $u^*(t)$ and $u^*$ to distinguish the two functions.
\end{rem}

\begin{rem}
    The only effect of the basic modulation is to give a simple control of the dynamics of scales. We do not need any coercive estimates produced by the existence of discrete spectrum. 
\end{rem}

\begin{proof}
The proof of $\eqref{eq:g-ortho}$, $\eqref{eq:nu-estimates}$,$\eqref{eq:d-g-lam}$, $\eqref{eq:N-K-bubbles}$, $\eqref{eq:N-K-converge}$  follows directly from the argument outlined in Step 1 of the proof of Lemma 5.12 in \cite{JL23b}. Thus, it suffices to prove the dynamical estimate $\eqref{eq:lambda'-bound}$.

Differentiating in time the orthogonality conditions $\ref{eq:g-ortho}$ yields, for each $j=1,\dots,N$, the identity
\begin{align} 
\label{eq:g-ortho-1}
& 0=\La i e^{i\theta_j} \calZ_{1,\lambda_j}    \mid \p_tg \Ra- \dot{\theta_j}\La e^{i\theta_j} \calZ_{1,\lambda_j} \mid g\Ra-\frac{\dot{\lambda_j}}{\lambda_j} \La i e^{i\theta_j} \Lam\calZ_{1,\lambda_j}    \mid g \Ra
\\ &=\La e^{i\theta_j} \calZ_{2,\lambda_j}  \mid \p_t g \Ra+  \dot{\theta_j}\La ie^{i\theta_j} \calZ_{2,\lambda_j}  \mid g\Ra -\frac{\dot{\lambda_j}}{\lambda_j} \La e^{i\theta_j} \Lam\calZ_{2,\lambda_j}  \mid  g \Ra
\end{align}

Next, differentiating in time the expression for $g(t)$ in $\ref{eq:g-def-1}$

\begin{align} 
   & \p_t g= \chi_{\al(t)}\p_tu(t)+\p_t\chi_{\al(t)} \cdot u(t) - \sum_{j=1}^N i\dot{\theta_j}e^{i\theta_j} W_{\lambda_j} + \sum_{j=1}^N \frac{\dot{\lambda_j}}{\lambda_j} e^{i\theta_j} \Lam W_{\lambda_j}
   \\ 
   &=  \chi_{\alpha(t)}(z\Delta u+zf(u))-u(t) \frac{\dot{\alpha}(t)}{\alpha(t)} (r\p_r \chi)\left( \cdot / \alpha(t) \right) - \sum_{j=1}^N i\dot{\theta_j}e^{i\theta_j} W_{\lambda_j} + \sum_{j=1}^N \frac{\dot{\lambda_j}}{\lambda_j} e^{i\theta_j} \Lam W_{\lambda_j}
   \\
   &= z\Delta (\chi_\alpha u) + zf\left(\chi_\alpha u\right) - \sum_{j=1}^N i\dot{\theta_j}e^{i\theta_j} W_{\lambda_j} + \sum_{j=1}^N \frac{\dot{\lambda_j}}{\lambda_j} e^{i\theta_j} \Lam W_{\lambda_j}
   \\
   &\quad - zu\Delta \chi_\alpha - 2z\partial_r u \partial_r \chi_\alpha - u(t) \frac{\dot{\alpha}(t)}{\alpha(t)} \left(r\partial_r \chi\right)(\cdot/\alpha(t)) + z\left(f(u)\chi_\alpha - f\left(\chi_\alpha u\right)\right) 
   \\
   &= z\Delta g + zf'\left(\calW(\vec{\theta}, \vec{\lambda})\right)g + zf\left(\calW(\vec{\theta}, \vec{\lambda})\right) - z\sum_{j=1}^N e^{i\theta_j}f\left(W_{\lambda_j}\right) \
   \\
   &\quad + zf\left(\calW(\vec{\theta}, \vec{\lambda}) + g\right) - zf\left(\calW(\vec{\theta}, \vec{\lambda})\right) - zf'\left(\calW(\vec{\theta}, \vec{\lambda})\right)g 
   \\
   &\quad + z\left(-u\Delta \chi_\alpha - 2\partial_r u \partial_r \chi_\alpha - \bar{z}u(t) \frac{\dot{\alpha}(t)}{\alpha(t)} \left(r\partial_r \chi\right)(\cdot/\alpha (t)) + f(u)\chi_\alpha - f\left(\chi_\alpha u\right)\right) 
   \\
   & \quad - \sum_{j=1}^N i\dot{\theta_j}e^{i\theta_j} W_{\lambda_j} + \sum_{j=1}^N \frac{\dot{\lambda_j}}{\lambda_j} e^{i\theta_j} \Lam W_{\lambda_j}
   \\
   &= \calL_{\calW}g+f_{\bfi} (\vec{\theta},\vec{\lambda})+f_{\bfq} (\vec{\theta},\vec{\lambda},g)+\calR(u,\alpha) - \sum_{j=1}^N i\dot{\theta_j}e^{i\theta_j} W_{\lambda_j} + \sum_{j=1}^N \frac{\dot{\lambda_j}}{\lambda_j} e^{i\theta_j} \Lam W_{\lambda_j},
\end{align}

where

\begin{align}
\calL_{\calW} &:= z\Delta +z f'(\calW(\vec{\theta},\vec{\lambda})),\quad f_{\bfi} (\vec{\theta},\vec{\lambda}) := z f\big( \calW( \vec \theta, \vec \lam)\big) - z\sum_{j =1}^{N} e^{i\theta_j}  f(W_{\lam_{j}}) \\
f_{\bfq}(\vec \theta, \vec \lam, g) &:= zf\big( \mathcal{W}( \vec \theta, \vec \lam) + g \big)  - z f\big(\mathcal{W}( \vec \theta, \vec \lam) \big) -  z f'\big( \calW( \vec \theta, \vec \lam)\big) g\\
\calR(u,\alpha)&:=z\left(-u\Delta \chi_\alpha - 2\partial_r u \partial_r \chi_\alpha - \bar{z}u(t) \frac{\dot{\alpha}(t)}{\alpha(t)} \left(r\partial_r \chi\right)(\cdot/\alpha (t)) + f(u)\chi_\alpha - f\left(\chi_\alpha u\right)\right) .
\end{align} 

The subscript  $\bfi$ above stands for ``interaction'' and $\bfq$ stands for ``quadratic''.
For each $j\in \{1,\dots,N\}$, we use the identity $\eqref{eq:g-ortho-1}$ to obtain the following system

\begin{align}
    &-\dot{\theta}_j \left( \La  \calZ_{1,\lambda_j}    \mid W_{\lam_j} \Ra+ \La e^{i\theta_j} \calZ_{1,\lambda_j} \mid g\Ra\right)-\frac{\dot{\lambda_j}}{\lambda_j} \La i e^{i\theta_j} \Lam\calZ_{1,\lambda_j}    \mid g \Ra 
    \\
    &
    +\sum_{k\neq j}
    \La i e^{i\theta_j} \calZ_{1,\lambda_j}    \mid - i\dot{\theta_k}e^{i\theta_k} W_{\lambda_k} +  \frac{\dot{\lambda_k}}{\lambda_k} e^{i\theta_k} \Lam W_{\lambda_k} \Ra
    \\
    &= -\La i e^{i\theta_j} \calZ_{1,\lambda_j}    \mid  \calL_{\calW}g+f_{\bfi} (\vec{\theta},\vec{\lambda})+f_{\bfq} (\vec{\theta},\vec{\lambda},g)+\calR(u,\alpha)  \Ra,
\end{align}

\begin{align}
    & 
    \frac{\dot{\lambda_j}}{\lambda_j} \left( \La  \calZ_{2,\lambda_j}  \mid  \Lam W_{\lambda_j}  \Ra  - \La e^{i\theta_j} \Lam\calZ_{2,\lambda_j}  \mid  g \Ra \right)+  \dot{\theta_j}\La ie^{i\theta_j} \calZ_{2,\lambda_j}  \mid g\Ra 
    \\
    &
    +\sum_{k\neq j} \La e^{i\theta_j} \calZ_{2,\lambda_j}  \mid - i\dot{\theta_k}e^{i\theta_k} W_{\lambda_k} +  \frac{\dot{\lambda_k}}{\lambda_k} e^{i\theta_k} \Lam W_{\lambda_k} \Ra
    \\
    & =  -\La e^{i\theta_j} \calZ_{2,\lambda_j}  \mid \calL_{\calW}g+f_{\bfi} (\vec{\theta},\vec{\lambda})+f_{\bfq} (\vec{\theta},\vec{\lambda},g)+\calR(u,\alpha)  \Ra.
\end{align}

The above system is diagonally dominant for all sufficiently small \( \eta_0 > 0 \) due to $\eqref{aux}$ and $\eqref{eq:d-g-lam}$. Hence, it is invertible. Next, we estimate each term for $k=1,2$
\[
\begin{aligned}
\left| \langle \mathcal{L}_{\mathcal{W}} g \mid \mathcal{Z}_{k,\lambda_j} \rangle \right| &\lesssim  \|g\|_{\mathcal{E}}, \\
\left| \langle f_{\mathbf{i}}(\vec{\te}, \vec{\lambda}) \mid {\mathcal{Z}_{k,\lambda_j}} \rangle \right| &\lesssim  \left( \frac{\lambda_j}{\lambda_{j+1}} \right)^{\frac{D-2}{2}} + \left( \frac{\lambda_{j-1}}{\lambda_j} \right)^{\frac{D-2}{2}}  \lesssim   \mathbf{d}(t)^2 + o_n(1) , \\
\left| \langle f_{\mathbf{q}}(\vec{\te}, \vec{\lambda}, g) \mid {\mathcal{Z}_{k,\lambda_j}} \rangle \right| &\lesssim   \mathbf{d}(t)^2 + o_n(1) , \quad \left| \langle \phi(u, \nu) \mid {\mathcal{Z}_{k,\lambda_j}} \rangle \right| \lesssim  o_n(1),
\end{aligned}
\]
where the first inequality follows by integration by parts and Cauchy-Schwarz, the second inequality follows from a computation analogous to Lemma 2.21 in \cite{JL23b}, the third inequality follows from a Taylor expansion, and the fourth inequality follows from $\eqref{eq:nu-estimates}$. Thus, collecting all the above estimates, we get
\[
|\lambda_j'| \lesssim \frac{1}{\lambda_j} \left( \mathbf{d}(t) + \zeta_{3,n} \right)
\]
for some sequence \( \zeta_{3,n} \to 0 \) as \( n \to \infty \). Then $\eqref{eq:lambda'-bound}$ follows by enlarging \( \varepsilon_n \).

\end{proof}

Next, we prove a lemma connecting localized bubbling as in Lemma $\ref{lem:compactness}$ to sequential bubbling. It can be seen through the proof that the phase parameters $\vec \te$ should be time-dependent. 

\begin{lem}\label{lem:delta-to-d}
There exists a constant $\eta_0>0$ having the following property. Let $t_n \in\left[a_n, b_n\right]$ and let $\mu_n$ be a positive sequence satisfying the conditions:
\begin{enumerate}
    \item $\lim _{n \rightarrow \infty} \frac{\mu_n}{\mu_{K+1}\left(t_n\right)}=0$,
    \item $\mu_n \geq \nu\left(t_n\right)$ or $\left\|{u}\left(t_n\right)\right\|_{\mathcal{E}\left(\mu_n, \nu\left(t_n\right)\right)} \leq \eta_0$
    \item $\lim _{n \rightarrow \infty} \delta_{\mu_n}\left(t_n\right)=0$, where $\delta_{\mu_n}$ is defined in $\ref{def:delta}$.
\end{enumerate}
Then $\lim _{n \rightarrow \infty} \mathbf{d}\left(t_n\right)=0$.
\end{lem}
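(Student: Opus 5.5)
The plan is to glue two pieces of information at time $t_n$. The first is an \emph{interior} multi-bubble configuration on $\{r\le\mu_n\}$, supplied by hypothesis (3). The second is the \emph{exterior} configuration of $N-K$ bubbles plus $u^*(t_n)$, supplied by Lemma~\ref{lem:basic-mod}(iii). I read $\mu_{K+1}(t_n)$ in hypothesis (1) as $\lam_{K+1}(t_n)$, and I read $\delta_{\mu_n}$ in the localized sense of Lemma~\ref{lem:compactness}, i.e.\ with the $\E$-norm taken over $r\le\mu_n$.

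\textbf{Interior configuration.} By (3), after passing to a subsequence, there are a fixed integer $M$, phases $\vec\te_n\in\bbT^M$ and scales $\vec\lam_n\in(0,\infty)^M$ with $\lam_{n,M}\ll\mu_n$ such that
\[
\|u(t_n)-\calW(\vec\te_n,\vec\lam_n)\|_{\E(r\le\mu_n)}\to0 .
\]
The main task is to show $M=K$. Once that is known, the configuration
\[
(\te_{n,1},\dots,\te_{n,K},\te_{K+1}(t_n),\dots,\te_N(t_n)),\qquad (\lam_{n,1},\dots,\lam_{n,K},\lam_{K+1}(t_n),\dots,\lam_N(t_n))
\]
is admissible in the definition of $\bfd(t_n)$. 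Its error is controlled by splitting $(0,\infty)$ at a radius $R_n$:
\begin{itemize}
\item on $r\le R_n$, the exterior bubbles $W_{\lam_j(t_n)}$ with $j>K$ have $\E$-norm $o(1)$ because $R_n\ll\lam_{K+1}(t_n)$, and $u^*(t_n)=(1-\chi_{\al})u$ vanishes there since $R_n\le\al(t_n)$;
\item on $r\ge R_n$, the interior bubbles are $o(1)$ since $\lam_{n,K}\ll R_n$;
\item the ratio $\lam_{n,K}/\lam_{K+1}(t_n)\le \lam_{n,K}/\mu_n\cdot\mu_n/\lam_{K+1}(t_n)\to0$ by hypothesis (1).
\end{itemize}
Combined with \eqref{eq:N-K-converge}, this gives $\bfd(t_n)\to0$.

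\textbf{Case $\mu_n\ge\nu(t_n)$.} Take $R_n=\mu_n$ in Lemma~\ref{lem:basic-mod}(iii); this is allowed since $\nu(t_n)\le\mu_n\ll\lam_{K+1}(t_n)$. Then $E(u(t_n);\mu_n,\infty)\to(N-K)E(W)+E(u^*)$. On the other hand, the interior approximation and radial Sobolev give $E(u(t_n);0,\mu_n)\to ME(W)$. Since $E(u(t))\to NE(W)+E(u^*)$ by \eqref{eq:energy-limit} (or \eqref{eq:global-energy-limit} in the global case), we obtain $M=K$, and the gluing above applies with $R_n=\mu_n$.

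\textbf{Case $\mu_n<\nu(t_n)$.} Now $\|u(t_n)\|_{\E(\mu_n,\nu(t_n))}\le\eta_0$, and I argue by energy trapping on the annulus.
\begin{enumerate}
\item Apply (iii) with $R_n=\nu(t_n)$ to get $E(u(t_n);\nu(t_n),\infty)\to(N-K)E(W)+E(u^*)$. Together with the total energy limit, this gives $E(u(t_n);0,\nu(t_n))\to KE(W)$.
\item Subtracting the interior limit yields $E(u(t_n);\mu_n,\nu(t_n))\to(K-M)E(W)$.
\item Establish a finite-annulus analogue of Lemma~\ref{lem:localized-coercivity}: using \eqref{eqn:r-sobolev},
\[
\int_{\mu_n}^{\nu}|u|^{2^*}r^{D-1}\,\vd r\lesssim\|u\|_{\E(\mu_n,\nu)}^{2^*},
\]
so for $\eta_0$ small, $c\|u\|_{\E(\mu_n,\nu)}^2\le E(u;\mu_n,\nu)\lesssim\eta_0^2$.
\item The annulus energy is therefore nonnegative and at most $C\eta_0^2<E(W)$, while it converges to an integer multiple of $E(W)$. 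Hence $M=K$, the annulus energy tends to $0$, and by step 3 so does $\|u(t_n)\|_{\E(\mu_n,\nu(t_n))}$.
\item Glue as before: use the interior approximation on $r\le\mu_n$, the vanishing annulus norm on $[\mu_n,\nu]$ (all bubbles are also $o(1)$ there, being far in scale), and \eqref{eq:N-K-converge} with $R_n=\nu(t_n)$ outside.
\end{enumerate}

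\textbf{Main obstacle.} The delicate step is the one just described. It is an annulus coercivity plus energy-quantization argument, and it needs the boundary terms at $\mu_n$ and $\nu$ to be consistent. In particular, the $\frac{|u|^2}{r^2}$ part of the $\E$-norm near the endpoints must be controlled by the neighboring regions (via \eqref{eqn:r-sobolev} and the smallness of $E(u;\nu/4,4\nu)$ from \eqref{eq:nu-estimates}), so that passing between localized norms and localized nonlinear energies costs only $o(1)$. The phases cause no difficulty: they are simply taken from the two decompositions, the interior ones being time-dependent, which is why $\vec\te$ must be allowed to depend on $t$.
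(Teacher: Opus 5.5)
Your proposal is correct and follows essentially the paper's proof. It builds an interior $M$-bubble configuration from $\delta_{\mu_n}(t_n)\to0$ and the exterior $N-K$ bubbles from Lemma~\ref{lem:basic-mod}(iii), uses energy trapping on the intermediate region to force $M=K$ and make the intermediate norm vanish, and then glues. The difference is technical: the paper avoids your case split and your annulus coercivity by applying the global Lemma~\ref{lem:basic-trapping} to the smoothly cut-off piece $u_n^{(m)}=(\chi_{R_n}-\chi_{\mu_n/2})u(t_n)$ with $\mu_n\ll R_n\ll\lam_{K+1}(t_n)$, $R_n\ge\nu(t_n)$, where the cutoff gradients are absorbed by the Hardy part of the $\E$-norm on $[\mu_n/2,\mu_n]$ and $[R_n,2R_n]$, on which $u(t_n)=o(1)$. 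This matters because your displayed bound $\int_{\mu_n}^{\nu}|u|^{2^*}r^{D-1}\,\vd r\lesssim\|u\|_{\E(\mu_n,\nu)}^{2^*}$ fails uniformly on thin annuli (take $u$ constant) and holds only once you anchor it at a neighbouring annulus such as $[\mu_n/2,\mu_n]$, where $\|u(t_n)\|_{\E(\mu_n/2,\mu_n)}=o(1)$ by the interior approximation.
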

\begin{proof}

Let $R_n$ be a sequence such that $\mu_n \ll R_n \ll \mu_{K+1}\left(t_n\right)$. Without loss of generality, we can assume $R_n \geq \nu\left(t_n\right)$ since $\nu(t)\ll \mu_{K+1}(t)$. Using the definition of the localized distance function \eqref{def:delta}  we obtain parameters $M_n\in\N, \vec{\ti\theta}_n\in\bT^{M_n}, \vec{\ti \lambda}_n\in (0,\infty)^{M_n}$ such that
\begin{align}
\label{eqn:loc-decomp}
\left\|u\left(t_n\right)-\mathcal{W}\left(\vec{\ti\theta}_n, \vec{\ti\lambda}_n\right)\right\|_{\E\left(r \leq \mu_n\right)}^2+\sum_{j=1}^{M_n-1}\left(\frac{\ti\lambda_{n, j}}{\ti\lambda_{n, j+1}}\right)^{\frac{D-2}{2}} \rightarrow 0.
\end{align} 
Set
$$
\begin{aligned}
{u}_n^{(i)} & :=\chi_{\frac{1}{2} \mu_n} {u}\left(t_n\right), \quad {u}_n^{(o)}  :=\left(1-\chi_{R_n}\right) {u}\left(t_n\right), \quad {u}_n^{(m)} :={u}\left(t_n\right)-{u}_n^{(i)}-{u}_n^{(o)} .
\end{aligned}
$$
Observe that if $\mu_n$ is a positive sequence such that $\lim _{n \rightarrow \infty} {\delta}_{\mu_n}\left(t_n\right)=0$, then we have
\begin{align}
\label{eqn:mu_n-annulus}
\lim _{n \rightarrow \infty}\|{u}\left(t_n\right)\|_{\mathcal{E}\left(\frac{1}{2} \mu_n, \mu_n\right)}=0.
\end{align} 
Combining this with the decomposition \eqref{eqn:loc-decomp} we have
$$
\lim _{n \rightarrow \infty}\|{u}_n^{(i)}-\mathcal{W}(\vec{\ti\theta}_n, \vec{\ti\lambda}_n)\|_{\mathcal{E}}=0 .
$$
Furthermore observe that if $t_n \in\left[a_n, b_n\right]$ and $\nu\left(t_n\right) \leq R_n \ll \mu_{K+1}\left(t_n\right)$,  then $\bfd_K(t_n;\nu(t_n))\to 0$ implies that
\begin{align}
\label{eqn:Rn-annulus}
\lim _{n \rightarrow \infty}\left\|{u}\left(t_n\right)\right\|_{\mathcal{E}\left(R_n, 2 R_n\right)}=0.
\end{align} 
Thus using the second assumption along with \eqref{eqn:mu_n-annulus} and \eqref{eqn:Rn-annulus} for $n$ large enough we have
\begin{align}
\|{u}_n^{(m)}\|_{\mathcal{E}}\leq 2 \eta_0, 
\end{align} 
which implies from Lemma \ref{lem:basic-trapping} that $0 \leq E({u}_n^{(m)}) \lesssim \eta_0^2$. We also have, again using \eqref{eqn:mu_n-annulus} and \eqref{eqn:Rn-annulus},
\begin{align}
\limsup _{n \rightarrow \infty}|E({u}(t_n))-E({u}_n^{(i)})-E({u}_n^{(m)})-E({u}_n^{(o)})|=0 .
\end{align} 
Combining the above convergence with $\lim _{n \rightarrow \infty} E({u}_n^{(o)})=(N-K) E({W})+E({u}^*)$ we see that $M_n=K$ and $\lim _{n \rightarrow \infty} E({u}_n^{(m)})=0$. Using Sobolev embedding, we get $\lim _{n \rightarrow \infty}\|{u}_n^{(m)}\|_{\mathcal{E}}=0$. Now we choose parameters $\vec { \lam}_n=(\ti \lam_{n,1},...,\ti \lam_{n,K},\lam_{K+1}(t_n),...,\lam_{N}(t_n))$ and $\vec { \te}_n=(\ti \te_{n,1},...,\ti \te_{n,K}, \allowbreak \te_{K+1}(t_n),...,\te_{N}(t_n))$ where $\lam_{j}(t_n)$ and $\te_{j}(t_n)$ are given by Lemma ~\ref{lem:basic-mod} to obtain the conclusion.
\end{proof}

\section{Conclusion of the Proof}
\label{sec:5}
In this section we prove Theorem $\ref{thm:main}$. For the remainder of the paper, when we write $[a_n,b_n]\in \calC_K(\veps_n,\eta)$ we always assume that $\bfd(a_n)=\veps_n$, $\bfd(b_n)=\eta$ and $\bfd(t)\in [\veps_n,\eta]$ for all $t\in [a_n,b_n]$. This assumption is valid by Remark $\ref{rem:collision}$.

Firstly we prove a lemma  relating the scale of the $K$th bubble $\lambda_K$ the lengths of the collision intervals $b_n-a_n$.

\begin{lem} \label{lem:collision-duration} 
If $\eta_0>0$ is small enough, then for any $\eta \in (0, \eta_0]$ there exist $\veps \in (0, \eta)$ and $C_u>0$ with the following property. If $[c, d] \subset [a_n, b_n]$, $\bfd(c)  \le \veps$ and $\bfd(d) \ge \eta$, then, 
\begin{align}
  (d- c)^{\frac{1}{2}} \ge C_u^{-1}  \lam_K(c).
\end{align} 
\end{lem}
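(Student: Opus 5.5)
We argue by contradiction, following the scheme of the analogous lemma in \cite{JL23b} and \cite{Ary25ra}. Suppose the conclusion fails for some $\eta\in(0,\eta_0]$. Then for every $\veps_m=1/m$ (small) and $C_m=m$ we can find $n_m$ and an interval $[c_m,d_m]\subset[a_{n_m},b_{n_m}]$ with $\bfd(c_m)\le 1/m$, $\bfd(d_m)\ge\eta$ and $(d_m-c_m)^{1/2}\le \lam_K(c_m)/m$. Rescale by $\lam_K(c_m)$: set $v_m(s,r):=\lam_K(c_m)^{\frac{D-2}{2}}u(c_m+\lam_K(c_m)^2 s,\lam_K(c_m) r)$, which again solves \eqref{eqn:GL}, and is defined on $[0,s_m]$ with $s_m:=(d_m-c_m)/\lam_K(c_m)^2\to 0$.

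At $s=0$, by \eqref{eq:d-g-lam} we have $v_m(0)=\calW(\vec\theta,\vec\lam/\lam_K(c_m))+u^*$-part$+g$ with $\|g\|_\calE\lesssim 1/m$, so in rescaled coordinates the $K$-th bubble is $e^{i\theta_K}W$ at scale $1$, the bubbles $j<K$ sit at scales $\ll 1$, and the bubbles $j>K$ (and the body map/radiation) sit at scales $\gg 1$. We then propagate this structure to time $s_m\to 0$ region by region. Near scale $1$ and on every fixed annulus $[r_1,r_2]$ with $r_1>0$ we use the localized short-time evolution lemma (or Lemma~\ref{lem:un-seq}, with $t_n=s_m$, $r_n$ slowly tending to $0$ and $R_n$ slowly to $\infty$) to get $\sup_{s\le s_m}\|v_m(s)-e^{i\theta_K}W\|_{\calE(r_n,R_n)}\to0$. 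For each inner bubble $j<K$ we apply Lemma~\ref{lem:un-seq} at its own scale $\mu=\lam_j/\lam_K\ll1$: the required condition $t_n^2<r_n\ll\mu_n$ is not automatic since $s_m$ need not be small compared with $\lam_j(c_m)^2/\lam_K(c_m)^2$. Here we instead use \eqref{eq:lambda'-bound}: $|\frac{d}{dt}\lam_j^2|\lesssim\bfd\le\eta$ is not enough directly, so the plan is to handle the interior differently: we only need that the total configuration inside radius $r_n$ stays close to \emph{some} $(K-1)$-bubble in the sense of $\delta_{r_n}$, and for this we use energy. By Lemma~\ref{lem:localized-energy} and the energy identity, the energy in $[0,r_n]$ at time $s_m$ equals that at time $0$ up to errors $\lesssim s_m^{1/2}/r_n\cdot(\int\|\p_tu\|^2)^{1/2}+\int_{c_m}^{d_m}\|\p_t u\|_{L^2}^2$, which tend to zero (the dissipation on $[a_n,T_+)$ is $o(1)$).

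The exterior region $r\ge R_n$ is treated with Lemma~\ref{lem:un-seq} for each exterior bubble (whose scales are $\gg1$, so $s_m$ is negligible) together with the propagation of small localized norm on the gaps, and for the radiation part we use \eqref{eq:N-K-converge} and the exterior localized energy inequality as in \eqref{outer-ball-estimate}. Combining: at time $d_m$ we have $\bfd_K(d_m;\lam_K(d_m)r_n)$-type closeness outside, closeness to $e^{i\theta_K}W$ on the middle annulus, and inner energy equal to $(K-1)E(W)+o(1)$ with small $\calE$ norm on the annulus $[r_n/2,r_n]$.

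The main obstacle is the interior: upgrading ``correct energy and a small transition annulus'' into actual closeness to a $(K-1)$-bubble at time $d_m$. I would avoid this by exploiting the minimality of $K$ (Definition~\ref{def:K-choice}): the intervals constructed give, for the inner region, no collision with fewer exterior bubbles, so the inner part at $d_m$ must still be close to a multi-bubble with $K-1$ bubbles; alternatively, choose $d_m$-nearby times where $\lam_K^2\|\p_t u\|_{L^2}^2$ is small and apply the Compactness Lemma~\ref{lem:compactness} with Lemma~\ref{lem:delta-to-d} to get $\bfd\to0$, then use continuity of $\bfd$ along the short interval. Either way we conclude $\bfd(d_m)\to0$, contradicting $\bfd(d_m)\ge\eta$.
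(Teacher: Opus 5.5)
There is a genuine gap, and it is exactly at what you call the main obstacle. Your target is $\bfd(d_m)\to0$, which forces you to control the configuration inside radius $r_n$ at time $d_m$. Nothing prevents the bubbles $1,\dots,K-1$ from interacting during $[c_m,d_m]$: their time scales $\lam_j^2$, $j<K$, may be far shorter than $d_m-c_m$. A loss of shape in that region is precisely what $\bfd(d_m)\ge\eta$ records. Preserving the inner energy up to $o(1)$ gives at best $(K-1)E(W)+o(1)$ inside, which says nothing about proximity to a $(K-1)$-bubble.

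Your route (b) fails as well. Lemma~\ref{lem:compactness} combined with Lemma~\ref{lem:delta-to-d} shows that whenever $\lam_K(s_n)\|\p_t u(s_n)\|_{L^2}\to0$ with $s_n\in[a_n,b_n]$, one gets $\bfd(s_n)\to0$. So times with small rescaled tension exist only where $\bfd$ is already small. There is also no modulus of continuity of $\bfd$ on the time scale $d_m-c_m$ that would carry this smallness to $d_m$. Nor does $\int_{c_m}^{d_m}\|\p_tu\|_{L^2}^2=o(1)$ together with $d_m-c_m\le \lam_K(c_m)^2/m^2$ force $\lam_K^2\|\p_t u\|_{L^2}^2$ to be small anywhere on the interval.

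The missing idea is that the interior never has to be controlled, and that minimality of $K$ is contradicted directly rather than used to produce interior closeness. Definition~\ref{def:K-choice} forbids sequences of intervals in $\calC_{K-1}(\zeta_m,\eta)$, i.e.\ with $N-K+1$ exterior bubbles (more, not fewer, as you wrote). Membership in $\calC_{K-1}$ involves only $\bfd(c_m)$, $\bfd(d_m)$ and $\bfd_{K-1}(t;\rho_{K-1}(t))$, and the last quantity sees only $r\ge\rho_{K-1}(t)$. The argument then runs as follows.
\begin{enumerate}
\item Pick $r_m$ with $\lam_{K-1}(c_m)+(d_m-c_m)^{1/2}\ll r_m\ll\lam_K(c_m)$ and with $\tilde E(u(c_m);r_m/8,8r_m)\to0$, and set $\rho_{K-1}(t):=r_m$.
\item Integrating \eqref{eq:lambda'-bound} gives $|\lam_j(t)^2-\lam_j(c_m)^2|\le C_0(t-c_m)$. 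This freezes $\lam_K,\dots,\lam_N$ on the interval and keeps $\lam_{K-1}(t)\ll r_m$.
\item Lemma~\ref{lem:un-seq} keeps $u(t)$ close to $e^{i\theta_K(c_m)}W_{\lam_K(c_m)}$ on $[r_m,2\nu(c_m)]$.
\item \eqref{eq:N-K-converge} handles the region $r\ge\nu(t)$.
\end{enumerate}
Together these give $\sup_{t\in[c_m,d_m]}\bfd_{K-1}(t;r_m)\to0$. With $\bfd(c_m)\to0$ and $\bfd(d_m)\ge\eta$, this puts $[c_m,d_m]\in\calC_{K-1}(\zeta_m,\eta)$ for some $\zeta_m\to0$, contradicting minimality of $K$. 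No smallness of $\bfd(d_m)$ is needed, and it need not hold.

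Finally, this requires your exterior and middle-annulus bounds uniformly for all $t\in[c_m,d_m]$, not only at $t=d_m$ as in your ``Combining'' step. The tools you cite do provide that uniformity.
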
 

\begin{proof} 

If not, then there exists $\eta>0$, sequences $\veps_n \to 0$, $[c_n, d_n] \subset [a_n, b_n]$, and $C_n \to \infty$ so that $\bfd(c_n)  = \veps_n$, $\bfd(d_n) =\eta$ and 
\begin{align} \label{eq:short-time} 
(d_n - c_n)^{\frac{1}{2}} \le C_n^{-1} \lam_K(c_n).
\end{align} 
We will show that $[c_n, d_n] \in \calC_{K-1}(\zeta_n, \eta)$ for some $\zeta_n\to 0$, hence contradicting the minimality of $K$.

First, using~\eqref{eq:lambda'-bound}
\begin{align} \label{eq:lambda'-dn} 
\abs{\lambda_j(t)^2 - \lambda_j(c_n)^2} \le C_0 (t- c_n)
\end{align} 
for all $t \in [c_n, d_n]$ and all $j=1,\cdots,N.$ Hence, using the contradiction assumption~\eqref{eq:short-time}  we can ensure that for large enough $n$,  
\begin{align}
\label{eq:large-scale}
\frac{3}{4} \le  \frac{ \lam_j(t)}{\lam_j(c_n)}   \le \frac{5}{4} 
\end{align} 
for all $j = K, \dots, N$ and all $t \in [c_n, d_n]$.  Since $\bfd(c_n) \to 0$ we have
\begin{align} \label{eq:lamK0} 
\lim_{n \to \infty} \sup_{t \in [c_n, d_n]} \sum_{j=K}^{N} \Big( \frac{ \lam_j(t)}{\lam_{j+1}(t)} \Big)^{\frac{D-2}{2}}  = 0
\end{align}  
and furthermore there exists a sequence $\{r_n\}$ such that   
\begin{align} \label{eq:rn-choice} 
\lam_{K-1}(c_n) + (d_n - c_n)^{\frac{1}{2}} \ll r_n \ll \lam_K(c_n) \mand \lim_{n \to \infty} \ti E( u(c_n); \frac{1}{8} r_n, 8 r_n)  = 0.
\end{align} 

Now using~\eqref{eq:lambda'-dn} with $j = K-1$ we get  
\begin{align}
\sup_{t \in [c_n, d_n]} \abs{ \lam_{K-1}(t)^2 - \lam_{K-1}(c_n)^2} \lesssim d_n - c_n, 
\end{align} 
which in turn implies
\begin{align}
\sup_{t \in [c_n, d_n]} \frac{\lam_{K-1}(t)}{ r_n} \lesssim \frac{\lam_{K-1}(c_n)}{ r_n} + \frac{(d_n - c_n)^{\frac{1}{2}}}{r_n} \to 0 \mas n \to \infty
\end{align} 
since $r_n$ satisfies~\eqref{eq:rn-choice}. 

Setting $\rho_{K-1}(t) := r_n$ for $t \in [c_n, d_n]$, we have
\begin{align}
    &u(t)-u^*-e^{i\te_K(c_n)}W_{\lam_K(c_n)}-\sum_{j=K+1}^{N}e^{i\te_j(t)}W_{\lam_j(t)}\\&=u^*(t)-u^*+\chi_{r_n}(u(t)-u^*(t))+v(t)-e^{i\te_K(c_n)}W_{\lam_K(c_n)}-\sum_{j=K+1}^{N}e^{i\te_j(t)}W_{\lam_j(t)}, 
\end{align}
where $v(t):= (1- \chi_{r_n}) (u(t)-u^*(t))$. Thus to prove $[c_n, d_n] \in \calC_{K-1}(\zeta_n, \eta)$, we only need to prove that
\begin{align}
\label{eq:K-1-bubble-1}
    \lim_{n\to \infty} \sup_{t\in [c_n,d_n]}\bigg(\|v(t)-e^{i\te_K(c_n)}W_{\lam_K(c_n)}-\sum_{j=K+1}^{N}e^{i\te_j(t)}W_{\lam_j(t)} \|_{\E(r_n,\infty)}^2 \\+\left(\frac{r_n}{\lam_K(c_n)}\right)^{\frac{D-2}{2}}+\left(\frac{\lam_K(c_n)}{\lam_{K+1}(t)}\right)^{\frac{D-2}{2}}+\sum_{j=K+1}^N\left(\frac{\lam_j(t)}{\lam_{j+1}(t)}\right)^{\frac{D-2}{2}}\bigg)=0 .
\end{align}

Firstly using ~\eqref{eq:N-K-converge} we have
\begin{align}
    \lim_{n\to \infty} \sup_{t\in [c_n,d_n]} \|v(t)-\sum_{j=K+1}^{N}e^{i\te_j(t)}W_{\lam_j(t)} \|_{\E(\nu(t),\infty)}=0 .
\end{align}
Since $\nu(t)=\nu_n\lam_{K+1}(t)$ for $t\in [c_n,d_n]$, we have 
\begin{align}
    \left|\frac{\nu^2(t)}{\nu^2(c_n)}-1\right|=\left|\frac{\lam_{K+1}^2(t)}{\lam_{K+1}^2(c_n)}-1\right|\leq C_0\frac{d_n-c_n}{\lam_K(c_n)}\frac{\lam_K(c_n)}{\lam_{K+1}(c_n)}=o_{n}(1),
\end{align}
which implies
\begin{align}
    \lim_{n\to \infty} \sup_{t\in [c_n,d_n]} \|v(t)-\sum_{j=K+1}^{N}e^{i\te_j(t)}W_{\lam_j(t)} \|_{\E(2\nu(c_n),\infty)}=0 .
\end{align}

Next, using $\bfd(c_n)\to 0$ we have
\begin{align}
    \lim_{n\to\infty}\|v(c_n)-e^{i\te_K(c_n)}W_{\lam_K(c_n)}\|_{\E(\frac{1}{2}r_n,4\nu(c_n))}=0.
\end{align}
Combining this with Lemma $\ref{lem:un-seq}$ we obtain
\begin{align}
    \lim_{n\to\infty}\sup_{t\in[c_n,d_n]}\|v(t)-e^{i\te_K(c_n)}W_{\lam_K(c_n)}\|_{\E(r_n,2\nu(c_n))}=0.
\end{align}

Recall that $\bfd(c_n)\to0$, ~\eqref{eq:large-scale} holds for $j=K,...,N$ and ~\eqref{eq:d-g-lam} , we obtain ~\eqref{eq:K-1-bubble-1}, thus  $[c_n, d_n] \in \calC_{K-1}(\zeta_n, \eta)$ for some $\zeta_n\to 0$ for some $\zeta_n\to0$, which leads to a contradiction and completes the proof.
\end{proof} 

\begin{lem}\label{lem:cndn} Let $\eta_0>0$ be as in Lemma~\ref{lem:collision-duration},  $\eta \in (0, \eta_0]$, $\veps_n \to 0$ be some sequence, and let $[a_n, b_n] \in \calC_K(\veps_n, \eta)$. Then, there exist $\veps \in (0, \eta)$,  $n_0 \in \N$,  and $c_n,  d_n \in (a_n, b_n)$ such that for all $n \ge n_0$, we have 
\begin{align}
\label{eq:d>eps} 
\bfd(t) \ge \veps, \quad \forall \, \, t \in [c_n, d_n], 
\end{align}
\begin{align}
\label{eq:dn-cn} 
d_n - c_n = \frac{1}{n} \lam_K(c_n)^2, 
\end{align} 
and 
\begin{align}
 \label{eq:lamKcn} 
\frac{1}{2} \lam_K(c_n) \le \lam_K(t) \le 2\lam_K(c_n) \quad \forall\, \, t \in [c_n, d_n]. 
\end{align} 
\end{lem}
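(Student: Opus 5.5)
The plan is to use Lemma~\ref{lem:collision-duration} together with the Lipschitz-type control \eqref{eq:lambda'-bound} on the scales, following Lemma 5.15 of \cite{JL23b}. Fix $\eta\in(0,\eta_0]$ and let $\veps\in(0,\eta)$, $C_u$ be given by Lemma~\ref{lem:collision-duration} for this $\eta$. We then apply Lemma~\ref{lem:collision-duration} a second time with $\eta$ replaced by $\veps$; this yields some $\veps'\in(0,\veps)$ and a constant $C_u'$. Since $\bfd(a_n)=\veps_n\to0$, for $n$ large we have $\bfd(a_n)<\veps'$, while $\bfd(b_n)=\eta>\veps$.

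The subinterval is chosen by a last-exit/first-entry construction. Set
\[
c_n:=\sup\{t\in[a_n,b_n]:\bfd(t)\le\veps\},\qquad d_n^*:=\inf\{t\in[c_n,b_n]:\bfd(t)\ge\eta\}.
\]
By continuity of $\bfd$ (which follows from continuity of the flow in $\calE$) we have $\bfd(c_n)=\veps$, $\bfd(d_n^*)=\eta$, and $\bfd(t)\ge\veps$ on $[c_n,d_n^*]$; in fact $\bfd(t)\ge\veps$ on all of $[c_n,b_n]$. Next pick the last time $c_n'\le c_n$ at which $\bfd\le\veps'$. Applying Lemma~\ref{lem:collision-duration} to $[c_n',c_n]$ (with the pair $\veps'$, $\veps$) and to $[c_n',d_n^*]$ (with the pair $\veps'$, $\eta$ if needed) gives $d_n^*-c_n'\gtrsim\lam_K(c_n')^2$. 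The cleaner route is to use the first pair directly on $[c_n,d_n^*]$: this requires $\bfd(c_n)\le\veps$ and $\bfd(d_n^*)\ge\eta$, both of which hold, so $d_n^*-c_n\ge C_u^{-2}\lam_K(c_n)^2$. Hence for $n\ge n_0$ with $1/n<C_u^{-2}$ we may define $d_n:=c_n+\frac1n\lam_K(c_n)^2\in(c_n,d_n^*)$. Then \eqref{eq:dn-cn} holds by construction and \eqref{eq:d>eps} holds because $[c_n,d_n]\subset[c_n,b_n]$. We also need $c_n>a_n$, which follows from $\bfd(a_n)<\veps=\bfd(c_n)$, and $d_n<b_n$.

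Finally, \eqref{eq:lamKcn} is derived as in \eqref{eq:lambda'-dn}. Since $\bfd\le\eta\lesssim1$ on $[a_n,b_n]$, \eqref{eq:lambda'-bound} gives $|\frac{d}{dt}\lam_K^2|\le 2C_0\eta$, and therefore
\[
|\lam_K(t)^2-\lam_K(c_n)^2|\le 2C_0\eta\,(t-c_n)\le \frac{2C_0\eta}{n}\lam_K(c_n)^2 .
\]
This is at most $\frac34\lam_K(c_n)^2$ for $n$ large, which yields the factor-$2$ comparability.

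The main subtlety lies in the use of Lemma~\ref{lem:collision-duration}. It requires $\bfd(c)\le\veps$ exactly at the left endpoint, and the $\veps$ must be the one the lemma produces for the given $\eta$. This is why $c_n$ is defined via the last time $\bfd\le\veps$: this choice makes $\bfd(c_n)=\veps$ and simultaneously secures the lower bound \eqref{eq:d>eps} on the whole subinterval. The remaining steps are routine continuity arguments.
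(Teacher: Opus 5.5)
Your proof is correct and follows the paper's argument: take $c_n$ as the last time in $[a_n,b_n]$ with $\bfd\le\veps$, where $\veps$ comes from Lemma~\ref{lem:collision-duration}. Then use that lemma to guarantee $c_n+\frac1n\lam_K(c_n)^2<b_n$ for large $n$, and integrate \eqref{eq:lambda'-bound} to obtain \eqref{eq:lamKcn}. The second application of Lemma~\ref{lem:collision-duration} with $\veps'$ and the auxiliary time $c_n'$ is superfluous and can be deleted.
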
 
\begin{proof} 
    We choose $\veps$ given in Lemma ~\ref{lem:collision-duration} and take $c_n=\sup\{t:t\in [a_n,b_n], \bfd(t)\leq \veps\}$ so that $\bfd(c_n)=\veps$. Lemma ~\ref{lem:collision-duration} ensures that for $n$ large enough we have $c_n+\frac{1}{n} \lam_K(c_n)^2<b_n$ and $\left|\frac{\lam_K^2(t)}{\lam_K^2(c_n)}-1\right|\leq \frac{C_0}{n}$ for all $t\in [c_n,c_n+\frac{1}{n} \lam_K(c_n)^2]$, which proves ~\eqref{eq:lamKcn}.

\end{proof} 

\begin{proof}[Proof of Theorem~\ref{thm:main}] We argue by contradiction. Suppose that Theorem~\ref{thm:main} is false, we consider a sequence of disjoint collision intervals $[a_n, b_n] \in \calC_K(\veps_n, \eta)$  as in Lemma ~\ref{lem:basic-mod} and let $\eta>0$ be sufficiently small such that Lemma~\ref{lem:collision-duration} and Lemma~\ref{lem:delta-to-d} hold. Let $\veps>0$, $n_0$, and $[c_n, d_n]$ be as in Lemma~\ref{lem:cndn}. 

We claim that there exists a constant $c_0>0$ such that for every $n \ge n_0$, 
\begin{align}
\label{eq:tension-lower} 
\inf_{t \in [c_n, d_n]} \lam_K(t)^2 \| \p_t u(t) \|_{L^2}^2 \ge c_0. 
\end{align} 
If not, we can find a sequence $s_n \in [c_n, d_n]$  such that
\begin{align}
\lim_{n \to \infty} \lam_K(s_n) \| \p_t u(s_n) \|_{L^2}  = 0.
\end{align} 
up to a subsequence.
However, then  Lemma~\ref{lem:compactness} yields a sequence $r_n \to \infty$ such that, up to a subsequence,  
\begin{align}
\lim_{n \to \infty}  \delta_{r_n \lam_K(s_n)} ( u(s_n)) = 0.
\end{align} 
Then Lemma~\ref{lem:delta-to-d} implies that 
\begin{align}
\lim_{n \to \infty} \bfd(s_n) = 0,
\end{align} 
contradicting~\eqref{eq:d>eps}. As a consequence~\eqref{eq:tension-lower} holds. Therefore, using~\eqref{eq:tension-lower},~\eqref{eq:lamKcn}, and~\eqref{eq:dn-cn} we have 
\begin{align}
\sum_{n \ge n_0} \int_{c_n}^{d_n} \| \p_t u(t) \|_{L^2}^2 \, \ud t  \ge \frac{c_0}{4} \sum_{n \ge n_0} \int_{c_n}^{d_n} \lam_K(c_n)^{-2} \,  \ud t \ge \frac{c_0}{4} \sum_{n \ge n_0} n^{-1} = \infty.
\end{align} 
On the other hand,  by ~\eqref{eq:tension-L2} and the fact that the $[c_n, d_n]$ are disjoint, we have, 
\begin{align}
\sum_{n \ge n_0} \int_{c_n}^{d_n} \| \p_t u(t) \|_{L^2}^2 \, \ud t  \le \int_0^{T_+} \| \p_t u(t) \|_{L^2}^2 \, \ud t \lesssim \sup_{t\in[0,T_+)} \|u(t)\|_{\calE}< \infty, 
\end{align} 
which leads to a contradiction. 
\end{proof}

\bibliographystyle{plain}
\bibliography{GL}

\end{document}